\newtheorem{thm}{Theorem}[section]  
\theoremstyle{definition}    
\newtheorem{lemma}[thm]{Lemma}
\newtheorem{lem}[thm]{Lemma}
\newtheorem{cor}[thm]{Corollary}
\newtheorem{rem}[thm]{Remark}
\newcommand{\mb}[1]{{\mathbf #1}}       
\newcommand{\mc}[1]{{\mathcal #1}}
\newcommand{\mtx}[1]{\left[\,\begin{matrix} #1\end{matrix}\,\right ]}       
\newcommand{\beq}{\begin{equation}}
\newcommand{\eeq}{\end{equation}}
\newcommand{\BEQ}{\begin{equation*}}
\newcommand{\EEQ}{\end{equation*}}
\newcommand{\bea}{\begin{eqnarray}}
\newcommand{\eea}{\end{eqnarray}}
\newcommand{\BEA}{\begin{eqnarray*}}
\newcommand{\EEA}{\end{eqnarray*}}
\newcommand{\bse}{\begin{subequations}}
\newcommand{\ese}{\end{subequations}}
\newcommand{\BSE}{\begin{subequations*}}
\newcommand{\ESE}{\end{subequations*}}
\newcommand{\bca}{\begin{cases}}
\newcommand{\eca}{\end{cases}}
\newcommand{\ben}{\begin{enumerate}}
\newcommand{\een}{\end{enumerate}}
\newcommand{\bit}{\begin{itemize}}
\newcommand{\eit}{\end{itemize}}
\newcommand{\bml}{\begin{multline}}
\newcommand{\eml}{\end{multline}}
\newcommand{\BML}{\begin{multline*}}
\newcommand{\EML}{\end{multline*}}
\newcommand{\rank}{\text{rank}}
\newcommand{\spn}{\text{span}}
\newcommand{\F}{{\mathbb F}}
\renewcommand{\a}{{\mathbf a}}
\renewcommand{\b}{{\mathbf b}}
\newcommand{\e}{{\mathbf e}}
\renewcommand{\u}{{\mathbf u}}
\renewcommand{\v}{{\mathbf v}}
\newcommand{\w}{{\mathbf w}}
\newcommand{\x}{{\mathbf x}}
\newcommand{\y}{{\mathbf y}}
\newcommand{\0}{{\mathbf 0}}
\newcommand{\ch}[1]{\text{char}(#1)}
\newcommand{\Dom}{\text{Dom}}
\newcommand{\Ima}{\text{Im}}
\begin{document}

\title[Extensions of Witt's Theorem]{Some Extensions of Witt's Theorem}
\author{Huajun Huang}
\email{huanghu@auburn.edu}
\address{Department of Mathematics and Statistics\\Auburn University\\AL 36849-5310\\USA}
\classification{11E39 (11E04, 15A63)}
\keywords{Witt's theorem, isometry, flag, Witt's decomposition}

\begin{abstract}
We extend Witt's theorem to several kinds of simultaneous isometries of
 subspaces.
We determine sufficient and necessary conditions  for the extension of an isometry of subspaces $\phi:E\to E'$ to an isometry $\phi_V:V\to V'$ that also sends a given subspace to another,
or a given self-dual flag  to another, or a Witt's decomposition to another
and a special self-dual flag to another.
We also determine   sufficient and necessary conditions   for
the isometry of  generic flags or the simultaneous isometry
of   (subspace, self-dual flag) pairs.
\end{abstract}

\maketitle

\section{Introduction}

Let $\F$ be a field with $\ch\F\ne 2$.
Let $(V,b)$ be a finite dimensional metric space
over $\F$ with a nonsingular symmetric bilinear form
$b:V\times V\to \F$.
The classical Witt's theorem~\cite{E.Witt} states that
every isometry between two subspaces of $V$ can be extended to
an isometry of the whole space $V$.
Witt's theorem has been widely extended
in different context,
such as on the fields of characteristic 2
\cite{MR0008069,MR0015033,MR0241364,MR0168581, MR0182627},
on   various types of local rings
\cite{
MR0249455,
MR0318059,
MR0218381,
MR0280440,
MR0220702,
MR0268750,
MR0310751,
MR539432,
MR548876,
MR0202701,
MR0138565},
on the spaces of
countable dimension
\cite{
MR549828,
MR0195870,
MR0206023,
MR0037285},
on the noncommutative rings
\cite{
MR928496,
MR2033642,
MR0387276},
and on some other situations
\cite{
MR0234913,
MR0242872,
MR0005506}.

The present paper extends the classical Witt's theorem in another direction.
Let $(V, b)$ and $(V', b')$ be isometric nonsingular symmetric metric spaces.
Let $E\subset V$ and $E'\subset V'$
(resp. $A\subset V$ and $A'\subset V'$)
denote isometric subspaces.
Let ${\mc V}$ (resp. ${\mc V}'$) denote a flag of $V$ (resp. $V'$),
that is, an ascending chain of subspaces of $V$.
We provide:
\ben
\item
A sufficient and necessary condition for
an  isometry $\phi:E\to E'$ to be extended to an isometry
of the whole space $\phi_{V}:V\to V'$
that satisfies one of the following:
\ben
\item
$\phi_V$ sends a given subspace $A\subset V$ to another given subspace
$A'\subset V'$ (Theorem \ref{Witt-main}).

\item
$\phi_V$ sends a given self-dual flag ${\mc V}$  to
another given self-dual flag ${\mc V}'$
(Theorem \ref{thm:iso-Witt-ext}).

\item
$\phi_V$ sends a given Witt's decomposition $V=V^+\oplus\widehat V\oplus V^-$ of $V$
to another given Witt's decomposition
$V'=V'^+\oplus \widehat{V'}\oplus V'^-$ of $V'$, and sends
a given self-dual flag ${\mc V}$ ``compatible'' with $V^+$
to another given self-dual flag ${\mc V}'$ ``compatible'' with $V'^+$
(Theorem \ref{thm:Witt-decomposition}).

\een

\item
A sufficient and necessary condition for the existence of an isometry
$\phi_V:V\to V'$ that satisfies one of the following:
\ben
\item
$\phi_V$ sends a given generic flag ${\mc V}$   to
another given generic  flag ${\mc V}'$   (Theorem \ref{Witt-flag}).
\item
$\phi_V$ sends a given subspace $E\subset V$ to a given subspace $E'\subset V'$,
and a given self-dual flag ${\mc V}$ of $V$ to a given self-dual flag
${\mc V}'$ of $V'$ (Theorem \ref{thm:iso-Witt-like}).
\een

\een

The work has the merit of embracing all classical forms---
analogous results hold when $b$ and $b'$  are
alternating, Hermitian, or skew-Hermitian forms, and the proofs are similar.
It is interesting to extend the results to other
algebraic structures like skew-fields, local rings, or to characteristic 2 situations.

\section{Preliminary}\label{Preliminary}

Let $(V, b)$ and $(V', b')$ be isometric
finite dimensional symmetric metric spaces over $\F$,
{\em either singular or nonsingular} (in this section only). This means that
there is a linear bijection $\phi_{V}:V\to V'$ such that
$b(\u,\v)=b'(\phi_V(\u),\phi_V(\v))$ for all $\u,\v\in V$.
We call  $\phi_{V}$ an {\em isometry}.

We adopt some conventional notations from \cite{MR2125693}.
The metric space $(V,b)$ is {\em anisotropic}
if $b(\v,\v)\ne 0$ for all nonzero vector $\v\in V$.
It is {\em totally isotropic} if $b(\v,\v)=0$ for all
$\v\in V$.
Let $A^\perp$ denote the orthogonal complement of $A$.
The {\em radical} of a metric space (or meric subspace) $A$ is $A^\perp\cap A$, or simply $A^\perp$ if $A$ is the whole space.
Obviously, a radical is  totally isotropic.
Let $A\oplus B$ denote the direct sum, and
$A\odot B$ denote the {\em orthogonal} direct sum,
of subspaces $A$ and $B$ of a metric space.
Let $A-B:=\{\v\in A\mid \v\notin B\}$ (which is unlike
the notation of $A+B$).
Write $A\approx A'$ when $A$ and $A'$ are isometric
with respect to the corresponding metrics,
and $A\overset{\phi}{\approx}A'$ when $A$ and $A'$ are
isometric via an isometry $\phi$
(the domain of $\phi$ may be larger than $A$).

The following two  lemmas are handy in the later computations.

\begin{lem}\label{space} Let $A, B, C, E$ be subspaces of
the metric space $(V, b)$.
\ben
\item[(1)]
$\dim (A+B)+\dim (A\cap B)=\dim A+\dim B$.
\item[(2)]
If $V$ is nonsingular, then $\dim A+\dim A^\perp=\dim V$.
\item[(3)]
$(A+B)^\perp=A^\perp\cap B^\perp$ and $(A\cap B)^\perp= A^\perp+B^\perp$.
\item[(4)]
 $A\subset B$ if and only if $A^\perp\supset B^\perp$.
\item[(5)]
If $B\subset C$, then $(A+B)\cap C=A\cap C+B.$
\item[(6)]
If $A\subset E$, then $A\cap B= A\cap E \cap B.$
\item[(7)]
If $A\subset E$ and $B\subset E$, then $A\cap (B+C)=A\cap (B+C\cap E).$
\item[(8)]
$\dim A-\dim (A\cap B^\perp)=\dim B-\dim (B\cap A^\perp)$.
\een
\end{lem}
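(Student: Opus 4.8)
The plan is to sort the eight items by the ingredient each needs. Parts (1), (5), (6), (7) are pure linear algebra, with no form in sight. For (1) I would use the exact sequence $0\to A\cap B\to A\oplus B\to A+B\to 0$ with maps $c\mapsto(c,-c)$ and $(a,b)\mapsto a+b$, which immediately gives $\dim A+\dim B=\dim(A\cap B)+\dim(A+B)$. Part (5) is Dedekind's modular law: the inclusion $A\cap C+B\subseteq(A+B)\cap C$ is obvious, and conversely if $x=a+b\in C$ with $a\in A$ and $b\in B\subseteq C$, then $a=x-b\in C$, so $a\in A\cap C$ and $x\in A\cap C+B$. Part (6) is immediate since $A\subseteq E$ gives $A\cap E=A$. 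Part (7) then follows by combining them: $B\subseteq E$ and (5) give $(B+C)\cap E=B+C\cap E$, and $A\subseteq E$ and (6) give $A\cap(B+C)=A\cap E\cap(B+C)=A\cap\big((B+C)\cap E\big)=A\cap(B+C\cap E)$.

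Parts (2), (3), (4) concern the operator $A\mapsto A^\perp$. The ``formal halves'' need no hypothesis: a vector annihilates $A+B$ iff it annihilates both $A$ and $B$, so $(A+B)^\perp=A^\perp\cap B^\perp$, and $A\subseteq B$ trivially forces $A^\perp\supseteq B^\perp$. The rest uses nondegeneracy of $b$: when $V$ is nonsingular the pairing identifies $V$ with $V^*$, so the restriction map $V\to A^*$, $v\mapsto b(v,-)|_A$, is onto with kernel $A^\perp$, yielding $\dim A+\dim A^\perp=\dim V$ (part (2)) and, by a dimension count, the involution $A^{\perp\perp}=A$. Granting that involution, apply $\perp$ to $(A^\perp+B^\perp)^\perp=A^{\perp\perp}\cap B^{\perp\perp}=A\cap B$ (the already-proved identity of (3) applied to $A^\perp,B^\perp$) to obtain the second identity $(A\cap B)^\perp=A^\perp+B^\perp$; and the converse in (4) is $A^\perp\supseteq B^\perp\Rightarrow A=A^{\perp\perp}\subseteq B^{\perp\perp}=B$.

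Finally, for (8) I would restrict $b$ to a bilinear pairing $A\times B\to\F$. Its left radical is exactly $A\cap B^\perp$ and its right radical is exactly $B\cap A^\perp$, so $\dim A-\dim(A\cap B^\perp)$ and $\dim B-\dim(B\cap A^\perp)$ are two computations of the rank of this pairing, and they agree because a matrix and its transpose have the same rank; no nondegeneracy of $b$ is needed here. I do not expect a genuine obstacle anywhere --- it is all bookkeeping --- but the point to watch is keeping straight which statements actually invoke nondegeneracy (precisely (2), the second identity of (3), and the converse in (4), all through $A^{\perp\perp}=A$) versus which are purely formal, and making sure the element chases in (5) and (7) use $B\subseteq C$ and $A,B\subseteq E$ at exactly the right step.
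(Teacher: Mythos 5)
Your proposal is correct and matches the paper's treatment: the paper dismisses (1)--(7) as trivial and proves only (8), by extending bases of $A\cap B^\perp$ and $B\cap A^\perp$ to bases of $A$ and $B$ and identifying both sides with $\rank\mtx{b(\x_i,\y_j)}$ --- exactly your rank-of-the-restricted-pairing argument. Your bookkeeping of which items genuinely need nonsingularity (namely (2), the second identity of (3), and the converse in (4)) is a sound refinement the paper leaves implicit.
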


\begin{lem} \label{isometry}
Suppose $A, B\subset V$ and $A', B'\subset V'$ are metric subspaces.
\ben
\item[(1)]
If $A\overset{\phi}{\approx} A'$, then
every isometry $\phi_{V}:V\to V'$ extended from $\phi$
satisfies
$A^\perp\overset{\phi_{V}}{\approx} A'^\perp$.
\item[(2)]
If $A\overset{\phi}{\approx} A'$ and $B\overset{\phi}{\approx} B'$,
then $A\cap B\overset{\phi}{\approx} A'\cap B'$ and
$A+B\overset{\phi}{\approx} A'+B'$.
\een
\end{lem}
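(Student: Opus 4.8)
The plan is to prove both parts by transporting the subspaces in question through the ambient isometry and exploiting that it is a \emph{bijective} linear map preserving the form. For part~(1), observe first that, since $\phi$ maps $A$ bijectively onto $A'$ and $\phi_V$ extends $\phi$, we have $\phi_V(A)=A'$. Then chase the definition of the orthogonal complement: for $\w\in V'$, $\w\in\phi_V(A^\perp)$ iff $\phi_V^{-1}(\w)\in A^\perp$ iff $b(\phi_V^{-1}(\w),\a)=0$ for all $\a\in A$; applying the isometry identity $b(\u,\v)=b'(\phi_V(\u),\phi_V(\v))$ rewrites this as $b'(\w,\phi_V(\a))=0$ for all $\a\in A$, that is, $b'(\w,\a')=0$ for all $\a'\in A'$, which is exactly $\w\in A'^\perp$. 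Hence $\phi_V(A^\perp)=A'^\perp$, and restricting the global isometry $\phi_V$ to $A^\perp$ gives $A^\perp\overset{\phi_V}{\approx}A'^\perp$ with the induced forms.

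For part~(2), let $D$ denote the (common) domain of $\phi$. Since $\phi$ carries $A$ onto $A'$ and $B$ onto $B'$, both $A$ and $B$ lie in $D$, hence so do $A+B$ and $A\cap B$. Linearity of $\phi$ gives $\phi(A+B)=\phi(A)+\phi(B)=A'+B'$, and injectivity of $\phi$ (it is an isometry of $D$) gives $\phi(A\cap B)=\phi(A)\cap\phi(B)=A'\cap B'$: the inclusion ``$\subseteq$'' is clear, and if $\phi(\a)=\phi(\b)$ with $\a\in A$, $\b\in B$, then $\a=\b\in A\cap B$. Restricting $\phi$ to $A+B$ and to $A\cap B$ now yields the two asserted isometries.

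The calculations are routine; the one point worth care is that part~(1) uses the \emph{bijectivity} of $\phi_V$ rather than a dimension count. In the possibly-singular setting of this section the formula $\dim A+\dim A^\perp=\dim V$ of Lemma~\ref{space}(2) is unavailable, so the equality $\phi_V(A^\perp)=A'^\perp$ cannot be obtained from the (easy) inclusion $\phi_V(A^\perp)\subseteq A'^\perp$ by comparing dimensions; instead it comes from the ``iff'' chain above, which is reversible precisely because $\phi_V$ is a bijection of the whole space. The rest is bookkeeping with images of sums and intersections under an injective linear map.
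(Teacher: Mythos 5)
Your proof is correct. The paper offers no written argument for this lemma (it declares all parts of Lemmas \ref{space} and \ref{isometry} trivial except \ref{space}(8)), and your chase through the definitions --- using bijectivity of $\phi_V$ for the orthogonal complement rather than a dimension count, which is indeed the right move in the possibly singular setting of Section \ref{Preliminary} --- is exactly the intended routine verification.
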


All proofs  are trivial except for that of Lemma \ref{space}(8).
Select a basis of $A\cap B^\perp$ and extend it to a basis
$\{\x_1,\cdots,\x_{\dim A}\}$ of $A$.
Select a basis of $B\cap A^\perp$ and extend it to a basis
$\{\y_1,\cdots,\y_{\dim B}\}$ of $B$. Then
Lemma \ref{space}(8) comes from the   equalities:
$$
\dim A-\dim (A\cap B^\perp)
=\rank \mtx{b(\x_i, \y_j)}_{\dim A\times\dim B}
=\dim B-\dim (B\cap A^\perp).
$$

Given linear maps $\mu:A\to A'$ and $\psi: B\to B'$
where $A, B\subset V$ and $A', B'\subset V'$, we define
$\mu\oplus\psi$  naturally
whenever $A\oplus B$ and $A'\oplus B'$ are well-defined.
Likewise for $\mu\odot \psi$.
More generally, if $\mu$ and $\psi$ agree on the intersection of domains
$\Dom(\mu)\cap\Dom(\psi)=A\cap B,$
then there is a unique linear map $\rho$
from $A+B$ to $A'+B'$ that naturally extends both $\mu$ and $\psi$.
Call $\rho$ the {\em combination} of $\mu$ and $\psi$.
A quick observation shows that if both $\mu$ and $\psi$ are linear bijections,
then so is the combination of $\mu$ and $\psi$.
Nevertheless, if both $\mu$ and $\psi$ are isometries, then
the combination of $\mu$ and $\psi$  may not necessarily be an isometry.

Witt's theorem can be slightly extended to include singular cases.

\begin{lem}\label{thm:Witt-deg}
Every isometry $\phi:E\to E'$
from subspace $E$ of $V$ to subspace $E'$ of $V'$ with
$\phi(E\cap V^\perp)=E'\cap V'^\perp$ can be extended to an isometry
from $V$ to $V'$.
\end{lem}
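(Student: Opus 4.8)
The plan is to reduce to the classical Witt's theorem for nonsingular spaces by dividing out the radicals, and then to lift the resulting isometry back to $V$. Concretely, pass to the quotients $\bar V\eqdef V/V^\perp$ and $\bar{V'}\eqdef V'/V'^\perp$. These carry nonsingular symmetric forms $\bar b,\bar b'$ induced by $b,b'$ via $\bar b(\bar\u,\bar\v)=b(\u,\v)$, the formula being well posed precisely because the radicals are totally isotropic. Since $(V,b)\approx(V',b')$, Lemma \ref{isometry}(1) shows that any isometry carries $V^\perp$ onto $V'^\perp$; in particular $\dim V^\perp=\dim V'^\perp$ and $\bar V\approx\bar{V'}$. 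The hypothesis $\phi(E\cap V^\perp)=E'\cap V'^\perp$ guarantees that $\phi$ descends to a well-defined linear bijection $\bar\phi\colon (E+V^\perp)/V^\perp\to (E'+V'^\perp)/V'^\perp$, and $\bar\phi$ is an isometry for $\bar b,\bar b'$ because the values of $b$ on $E$ depend only on cosets modulo $V^\perp$. Applying the classical Witt's theorem~\cite{E.Witt} in the nonsingular space $\bar V$ extends $\bar\phi$ to an isometry $\bar\Phi\colon\bar V\to\bar{V'}$.

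It remains to lift $\bar\Phi$ to $V$. Write $E=(E\cap V^\perp)\oplus E_0$ for some complement $E_0$, and, after adjusting the complement chosen inside $E'$, assume $E'=(E'\cap V'^\perp)\oplus\phi(E_0)$. Extend the isomorphism $\phi|_{E\cap V^\perp}\colon E\cap V^\perp\to E'\cap V'^\perp$ to a linear isomorphism $\psi\colon V^\perp\to V'^\perp$; since both radicals are totally isotropic, $\psi$ is automatically an isometry. Pick a subspace $U\supseteq E_0$ with $V=V^\perp\oplus U$ and a subspace $U'\supseteq\phi(E_0)$ with $V'=V'^\perp\oplus U'$; the quotient maps restrict to isometries $U\cong\bar V$ and $U'\cong\bar{V'}$, through which $\bar\Phi$ transports to a linear isomorphism $\Phi_0\colon U\to U'$. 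For $\u\in E_0$ we have $\Phi_0(\u)\equiv\phi(\u)\pmod{V'^\perp}$ with both vectors in $U'$ and $U'\cap V'^\perp=0$, hence $\Phi_0(\u)=\phi(\u)$. Therefore $\Phi\eqdef\psi\oplus\Phi_0\colon V=V^\perp\oplus U\to V'^\perp\oplus U'=V'$ restricts to $\phi$ on $E=(E\cap V^\perp)\oplus E_0$, and a direct computation using that $V^\perp$ is totally isotropic and orthogonal to all of $V$ (so that $b(\w_1+\u_1,\w_2+\u_2)=b(\u_1,\u_2)$ for $\w_i\in V^\perp$, $\u_i\in U$) together with the fact that $\bar\Phi$ is an isometry shows that $\Phi$ preserves the form. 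Thus $\Phi$ is the required isometry.

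The only genuinely delicate point — and what I expect to be the main obstacle — is the bookkeeping in the last paragraph: one must choose the complements $U$ and $U'$ so that they contain $E_0$ and $\phi(E_0)$ respectively, and realign the complement of $E'\cap V'^\perp$ inside $E'$ to be exactly $\phi(E_0)$, so that the transported map $\Phi_0$ restricts to $\phi$ on $E_0$ rather than merely agreeing with it modulo $V'^\perp$. Once these choices are coordinated, the remaining verifications are routine linear algebra and follow from Lemma \ref{space} and Lemma \ref{isometry}.
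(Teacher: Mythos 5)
Your proof is correct and follows essentially the same route as the paper: split off the radical via $V=V^\perp\oplus U$ with $U\supseteq E_0$, extend $\phi$ on the radical part by a dimension count (any linear bijection of totally isotropic spaces being an isometry), and apply classical Witt's theorem on the nonsingular part. The only cosmetic difference is that you apply Witt in the quotient $V/V^\perp$ and transport back to the complement $U$, whereas the paper applies it directly in the complement $\widetilde V\approx V/V^\perp$; the resulting extension $\psi\oplus\Phi_0$ is the paper's $\phi_{V^\perp}\odot\phi_{\widetilde V}$.
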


\begin{proof}
Select subspace $\widetilde E$ such that $E=(E\cap V^\perp)\odot\widetilde E$.
Extend $\widetilde E$ to $\widetilde V$ such that
$V=V^\perp\odot\widetilde V$, where $V^\perp$ is the radical of $V$.
Denote $\widetilde E':=\phi(\widetilde E)$.
Then
$$
E'=\phi(E\cap V^\perp)\odot\phi(\widetilde E)
=(E'\cap V'^\perp)\odot\widetilde E'.
$$
Extend $\widetilde E'$
to  $\widetilde V'$ such that $V'=V'^\perp\odot\widetilde V'$.
From $V\approx  V'$ we get
$\widetilde V\approx V/V^\perp\approx V'/V'^\perp\approx \widetilde V'$,
where $\widetilde V$ and $\widetilde V'$ are nonsingular.
By Witt's theorem (with respect to $V$ and $V'$) the isometry
$\phi|_{\widetilde E}$ can be extended to
an isometry $\phi_{\widetilde V}:\widetilde V\to \widetilde V'$.
Moreover,
since $\phi(E\cap V^\perp)=E'\cap V'^\perp$
and $\dim V^\perp=\dim V'^\perp$,
the isometry $\phi|_{E\cap V^\perp}$ can be extended to
  an isometry of the totally isotropic subspaces $\phi_{V^\perp}: V^\perp\to V'^\perp$.
Thus $\phi_{V}:=\phi_{V^\perp}\odot\phi_{\widetilde V}$
is an isometry from $V$ to $V'$ that extends $\phi$.
\end{proof}

There is a parallel result with no initial isometry provided.

\begin{lem} \label{E-V-*}
Let $E\subset V$ and $E'\subset V'$
such that $E\approx E'$
and $\dim(E\cap V^\perp)=\dim(E'\cap V'^\perp)$.
Then there exists an isometry
$\phi_{V}:V\to V'$ with $\phi_{V}(E)=E'$.
\end{lem}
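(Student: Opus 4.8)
The plan is to reduce to Lemma~\ref{thm:Witt-deg} by manufacturing, out of the abstract isometry $E\approx E'$, a \emph{concrete} isometry $\phi:E\to E'$ that in addition respects the ambient radicals, i.e.\ $\phi(E\cap V^\perp)=E'\cap V'^\perp$. The key point is that one has freedom to modify any given isometry on the radical of $E$ without harming the isometry property, because the form vanishes identically on a radical.

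First I would fix any isometry $\psi:E\to E'$, which exists since $E\approx E'$. Such a $\psi$ necessarily carries the radical $\mathrm{rad}(E)=E\cap E^\perp$ onto $\mathrm{rad}(E')=E'\cap E'^\perp$, the radical being intrinsic to the metric structure of a subspace; in particular $\dim\mathrm{rad}(E)=\dim\mathrm{rad}(E')$. Write $E=\mathrm{rad}(E)\odot\widehat E$ with $\widehat E$ nonsingular; applying $\psi$ yields $E'=\mathrm{rad}(E')\odot\psi(\widehat E)$, where $\psi(\widehat E)$ is nonsingular.

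Next, since $V^\perp\subseteq E^\perp$ we have $E\cap V^\perp\subseteq\mathrm{rad}(E)$, and likewise $E'\cap V'^\perp\subseteq\mathrm{rad}(E')$. Using the hypothesis $\dim(E\cap V^\perp)=\dim(E'\cap V'^\perp)$ together with $\dim\mathrm{rad}(E)=\dim\mathrm{rad}(E')$, I can choose a linear bijection $\tau:\mathrm{rad}(E)\to\mathrm{rad}(E')$ with $\tau(E\cap V^\perp)=E'\cap V'^\perp$: extend a basis of $E\cap V^\perp$ to a basis of $\mathrm{rad}(E)$, do the same on the primed side, and match the bases, the two complements having equal dimension. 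Since the form vanishes on a radical, $\tau$ is automatically an isometry. Now set $\phi:=\tau\odot(\psi|_{\widehat E}):E\to E'$; relative to the orthogonal decompositions above this is a well-defined linear bijection, and it is an isometry because for $r_i\in\mathrm{rad}(E)$ and $w_i\in\widehat E$ one has $b(r_1+w_1,r_2+w_2)=b(w_1,w_2)=b'(\psi w_1,\psi w_2)=b'(\phi(r_1+w_1),\phi(r_2+w_2))$.

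Finally, $\phi:E\to E'$ is an isometry with $\phi(E\cap V^\perp)=\tau(E\cap V^\perp)=E'\cap V'^\perp$, so Lemma~\ref{thm:Witt-deg} extends it to an isometry $\phi_V:V\to V'$, which then satisfies $\phi_V(E)=\phi(E)=E'$, as required. I do not expect a real obstacle here: the whole content is the bookkeeping ensuring the adjusting map $\tau$ exists, which is precisely where the dimension hypothesis on $E\cap V^\perp$ enters, together with—in the spirit of the other ``trivial'' facts of this section—the routine check that the reassembled map $\phi$ remains an isometry.
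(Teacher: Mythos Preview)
Your proof is correct and follows essentially the same strategy as the paper: produce an isometry $\phi:E\to E'$ with $\phi(E\cap V^\perp)=E'\cap V'^\perp$, then invoke Lemma~\ref{thm:Witt-deg}. The only cosmetic difference is that the paper obtains this $\phi$ by applying Lemma~\ref{thm:Witt-deg} a second time, now to the metric spaces $E\approx E'$ themselves (extending an arbitrary bijection $E\cap V^\perp\to E'\cap V'^\perp$), whereas you build $\phi$ by hand via the decomposition $E=\mathrm{rad}(E)\odot\widehat E$; your construction is exactly what that inner application of Lemma~\ref{thm:Witt-deg} unwinds to.
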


\begin{proof}
Construct a linear bijection (i.e. an isometry)
$\phi_0: E\cap V^\perp\to E'\cap V'^\perp$.
Then
$$\phi_0((E\cap V^\perp)\cap E^\perp)=\phi_0(E\cap V^\perp)
=E'\cap V'^\perp=(E'\cap V'^\perp)\cap E'^\perp.$$
Apply  Lemma \ref{thm:Witt-deg} on isometric metric spaces $E\approx E'$
and the isometry $\phi_0$. Then $\phi_0$ can be extended to
an isometry $\phi_1: E\to E'$ with $\phi_1(E\cap V^\perp)=E'\cap V'^\perp$. Apply Lemma \ref{thm:Witt-deg} again
on $V\approx V'$ and the isometry $\phi_1$.
Then $\phi_1$ can be extended to an isometry
$\phi_{V}: V\to V'$. Clearly $\phi_{V}(E)=E'$.
\end{proof}

Lemma \ref{thm:Witt-deg}
and Lemma \ref{E-V-*} lead to the  following results.

\begin{cor}\label{E-A-perp}
Suppose  $V\approx V'$ are nonsingular.
Let $E, A\subset V$ and $E', A'\subset V'$ satisfy that
$E\perp A$, $E'\perp A'$, $A\approx A'$, and
$\phi:E\to E'$ is an isometry with
$\phi(E\cap A)=E'\cap A'$. Then $\phi$ can be extended to an isometry
$\phi_{V}:V\to V'$ that sends $A$ to $A'$.
\end{cor}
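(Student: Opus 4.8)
The plan is to use $A$ (resp.\ $A'$) as the vehicle for applying the earlier lemmas, working inside $A$ and $E^\perp$ rather than all of $V$. First I would record the structural fact that, since $E\perp A$, we have $A\subset E^\perp$, and likewise $A'\subset E'^\perp$. By Lemma~\ref{isometry}(1), any extension $\phi_V$ of $\phi$ restricts to an isometry $E^\perp\overset{\phi_V}{\approx}E'^\perp$, so a natural strategy is to first build $\phi_V$ on $E^\perp$ sending $A$ to $A'$, then combine it with $\phi$ itself. For the combination to be well-defined I need $\phi$ and the map on $E^\perp$ to agree on $E\cap E^\perp=E^\perp\cap E$, the radical of $E$; and for the combination of two isometries to again be an isometry one wants the two pieces to sit orthogonally, which is exactly what $V=E\odot(\text{something})$ would give in the nonsingular case.

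Concretely, here is the main line. Since $V$ is nonsingular, $E^\perp$ is a metric subspace with radical $E^\perp\cap (E^\perp)^\perp=E^\perp\cap E$ (using $(E^\perp)^\perp=E$), and similarly for $E'^\perp$. The isometry $\phi:E\to E'$ carries $E\cap E^\perp$ onto $E'\cap E'^\perp$ (this is forced, since $\phi$ maps the radical of $E$ to the radical of $E'$: $b(\phi\u,\phi\v)=b(\u,\v)$). Now I want to apply Lemma~\ref{E-V-*} to the metric spaces $E^\perp\approx E'^\perp$ and the subspaces $A\subset E^\perp$, $A'\subset E'^\perp$. For this I must check (i) $E^\perp\approx E'^\perp$, which follows from $V\approx V'$, $E\approx E'$, and Witt's theorem (equivalently Lemma~\ref{isometry}(1) applied to any Witt extension of $\phi$), and (ii) $\dim(A\cap (E^\perp)^\perp)=\dim(A'\cap (E'^\perp)^\perp)$, i.e.\ $\dim(A\cap E)=\dim(A'\cap E')$, which holds because $\phi(E\cap A)=E'\cap A'$ is given. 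Lemma~\ref{E-V-*} then produces an isometry $\psi:E^\perp\to E'^\perp$ with $\psi(A)=A'$. However, this raw $\psi$ need not agree with $\phi$ on $E\cap E^\perp$, so I cannot combine them directly.

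To fix this, I would instead run Lemma~\ref{thm:Witt-deg} rather than Lemma~\ref{E-V-*}, feeding it an initial isometry on the radical that is compatible with $\phi$. Precisely: let $R=E\cap E^\perp$ be the radical of $E^\perp$ and $R'=E'\cap E'^\perp$ that of $E'^\perp$; set $\phi_0:=\phi|_R:R\to R'$, a linear bijection. I would first extend $\phi_0$ to an isometry of $A$ onto $A'$ that still sends $R\cap A$ to $R'\cap A'$ — but here I must be careful: $\phi_0$ is only defined on $R$, not on $R\cap A$ in a way forced by $A$; since $R\cap A\subset R$, $\phi_0|_{R\cap A}$ is already determined, and I need an isometry $A\to A'$ extending it. Because $A\approx A'$ and $\dim(R\cap A)=\dim(R'\cap A')$ — the latter because $R\cap A=E\cap A$ (as $A\subset E^\perp$ gives $A\cap E\subset E\cap E^\perp$, and conversely) and $\phi$ matches $E\cap A$ with $E'\cap A'$ — Lemma~\ref{thm:Witt-deg} (applied with $A,A'$ in the roles of $V,V'$, noting the radical of $A$ is $A\cap A^\perp\supseteq R\cap A$, so one first enlarges $\phi_0|_{R\cap A}$ to the radical of $A$) yields $\phi_A:A\to A'$ extending $\phi_0$ on $R\cap A$ with $\phi_A(\text{rad }A)=\text{rad }A'$. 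Then I apply Lemma~\ref{thm:Witt-deg} once more inside $E^\perp\approx E'^\perp$ to the combination of $\phi_0$ and $\phi_A$ — they agree on $R\cap A$ — to get $\phi_{E^\perp}:E^\perp\to E'^\perp$ with $\phi_{E^\perp}(A)=A'$ and $\phi_{E^\perp}|_R=\phi_0=\phi|_R$. Finally, since $V$ is nonsingular and $E\overset{\phi}\approx E'$, one has $V=E+E^\perp$ with $E\cap E^\perp=R$, so $\phi$ and $\phi_{E^\perp}$ agree on $E\cap E^\perp$ and their combination $\phi_V:V\to V'$ is a linear bijection; checking it is an isometry is a short bilinear-form computation using $b(e,x)=b'(\phi e,\phi_{E^\perp}x)=0$ for $e\in E$, $x\in E^\perp$. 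Then $\phi_V$ extends $\phi$ and sends $A$ to $A'$.

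The step I expect to be the main obstacle is arranging the \emph{compatibility on the overlap} so that the final combination is genuinely an isometry, not merely a linear bijection: the subtlety flagged in the text just before Lemma~\ref{thm:Witt-deg} is that combining two isometries can fail. The key point making it work here is orthogonality, $b(E,E^\perp)=0$ on one side and $b'(E',E'^\perp)=0$ on the other, together with $\phi_V$ restricting to $\phi$ on $E$ and to $\phi_{E^\perp}$ on $E^\perp$; but one must be vigilant that $E+E^\perp$ really is all of $V$ (true by nonsingularity and $\dim E+\dim E^\perp=\dim V$, Lemma~\ref{space}(2)) and that the pieces were built to match on $R=E\cap E^\perp$, which is why I route everything through $\phi_0=\phi|_R$ rather than invoking Lemma~\ref{E-V-*} blindly.
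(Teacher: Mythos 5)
Your route is genuinely different from the paper's. The paper exploits $E\subset A^\perp$: it applies Lemma~\ref{thm:Witt-deg} directly to $\phi:E\to E'$ viewed inside $A^\perp\approx A'^\perp$ (whose radical is $A^\perp\cap A$, so the hypothesis $\phi(E\cap A)=E'\cap A'$ is exactly the required compatibility with the radical), extends the resulting isometry $A^\perp\to A'^\perp$ to all of $V$ by Witt's theorem, and recovers $\phi_V(A)=A'$ by taking orthogonal complements. You instead exploit the dual inclusion $A\subset E^\perp$, build an isometry of $E^\perp$ onto $E'^\perp$ sending $A$ to $A'$ and agreeing with $\phi$ on the radical $R=E\cap E^\perp$, and then combine. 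This is workable and correctly identifies the key compatibility issue on $R$, but it is considerably longer, and one step fails as written.

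The error is the claim that $V=E+E^\perp$, which you justify by ``nonsingularity and $\dim E+\dim E^\perp=\dim V$.'' By Lemma~\ref{space}(1), $\dim(E+E^\perp)=\dim E+\dim E^\perp-\dim(E\cap E^\perp)=\dim V-\dim R$, so $E+E^\perp=V$ only when $R=\{\0\}$, i.e.\ when $E$ is nonsingular as a metric subspace. Whenever $E$ is degenerate (for instance totally isotropic and nonzero --- a case the corollary must cover), your combination of $\phi$ and $\phi_{E^\perp}$ is defined only on the proper subspace $E+E^\perp$, and you have not produced a map on $V$. The patch is immediate: the combination is an isometry from $E+E^\perp$ onto $E'+E'^\perp$ (the cross terms vanish since $b(E,E^\perp)=0$ and $b'(E',E'^\perp)=0$), so one final application of the classical Witt theorem extends it to $\phi_V:V\to V'$, and since $A\subset E^\perp\subset E+E^\perp$ this extension still sends $A$ to $A'$. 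A second, smaller omission: before feeding the combination of $\phi_0$ and $\phi_A$ into Lemma~\ref{thm:Witt-deg} you must verify that it is an isometry of $R+A$ onto $R'+A'$, not merely a well-defined linear bijection; this does hold, because $R\subset E\perp A$ and $R'\subset E'\perp A'$ force all cross terms to vanish, but it needs to be said, precisely because of the caveat in Section~\ref{Preliminary} that combinations of isometries need not be isometries.
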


\begin{proof}
  Notice that $(A^\perp)^\perp=A$ and $(A'^\perp)^\perp=A'$.
Apply Lemma \ref{thm:Witt-deg} on subspace $E$ of $A^\perp$ and
subspace $E'$ of $A'^\perp$.
Then the isometry $\phi$ can be extended to an isometry
$\phi_{1}:A^\perp\to A'^\perp$, which can be further
extended to an isometry $\phi_{V}:V\to V'$ by Witt's theorem.
Then $\phi_{V}$ sends $A$ to $A'$.
\end{proof}

\begin{cor}\label{E-A-perp-*}
Suppose  $V\approx V'$ are nonsingular.
Let $E, A\subset V$ and $E', A'\subset V'$ satisfy that
$E\perp A$, $E'\perp A'$, $E\approx E'$, $A\approx A'$, and
$\dim(E\cap A)=\dim(E'\cap A')$. Then there exists an isometry
$\phi_{V}:V\to V'$ such that
$E\overset{\phi_{V}}{\approx}E'$ and $A\overset{\phi_{V}}{\approx}A'$.
\end{cor}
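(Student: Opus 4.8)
The plan is to reduce to Corollary~\ref{E-A-perp} by first manufacturing an isometry $\phi\colon E\to E'$ that already matches up $E\cap A$ with $E'\cap A'$, and then invoking that corollary to extend it to all of $V$.

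The first step is to record what the hypothesis $E\perp A$ (equivalently $A\subseteq E^\perp$) buys us: the subspace $E\cap A$ lies in $E\cap E^\perp$, the radical of $E$; in particular $E\cap A$ is totally isotropic, and likewise $E'\cap A'$. Since these two subspaces have equal dimension by hypothesis, the vanishing of the form on each shows $E\cap A\approx E'\cap A'$. Moreover, because $E\cap A\subseteq A\subseteq E^\perp$ we have $(E\cap A)\cap E^\perp=E\cap A$, and similarly $(E'\cap A')\cap E'^\perp=E'\cap A'$, so $\dim\big((E\cap A)\cap E^\perp\big)=\dim\big((E'\cap A')\cap E'^\perp\big)$.

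Next I would apply Lemma~\ref{E-V-*} with the (possibly singular) ambient metric spaces $E\approx E'$ in place of $V\approx V'$, and the subspaces $E\cap A\subseteq E$, $E'\cap A'\subseteq E'$ in place of $E\subseteq V$, $E'\subseteq V'$. The two hypotheses needed by Lemma~\ref{E-V-*} are exactly the two facts just established, so we obtain an isometry $\phi\colon E\to E'$ with $\phi(E\cap A)=E'\cap A'$. Finally, $\phi$ meets all the requirements of Corollary~\ref{E-A-perp} ($V\approx V'$ nonsingular, $E\perp A$, $E'\perp A'$, $A\approx A'$, and $\phi(E\cap A)=E'\cap A'$), so that corollary extends $\phi$ to an isometry $\phi_{V}\colon V\to V'$ sending $A$ to $A'$; as $\phi_{V}$ extends $\phi$ it also sends $E$ to $E'$, giving $E\overset{\phi_{V}}{\approx}E'$ and $A\overset{\phi_{V}}{\approx}A'$.

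I do not expect a genuine obstacle here; the only point requiring care is the use of $E\perp A$ to place $E\cap A$ inside the radical of $E$, which is precisely what lets Lemma~\ref{E-V-*} be applied to the pair $(E,\,E\cap A)$ with the radical condition automatically satisfied. (An alternative route avoids producing an initial isometry on $E$: one first gets $A^\perp\approx A'^\perp$ from an isometry $V\to V'$ sending $A$ to $A'$ supplied by Lemma~\ref{E-V-*}, notes that the radical of $A^\perp$ is $A\cap A^\perp$ and that $E\cap A=E\cap A\cap A^\perp$, applies Lemma~\ref{E-V-*} to $(A^\perp,E)$ and $(A'^\perp,E')$, and extends the resulting isometry to $V$ by Lemma~\ref{thm:Witt-deg}; this is the argument of Corollary~\ref{E-A-perp} with Lemma~\ref{E-V-*} substituted for Lemma~\ref{thm:Witt-deg}.)
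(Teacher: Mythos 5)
Your argument is correct. Your primary route and the paper's differ only in where Lemma~\ref{E-V-*} is applied: you apply it to the pair $E\cap A\subset E$ (using that $E\perp A$ places $E\cap A$ inside the radical $E\cap E^\perp$, so the radical condition reduces to the given equality $\dim(E\cap A)=\dim(E'\cap A')$) to manufacture an isometry $\phi:E\to E'$ with $\phi(E\cap A)=E'\cap A'$, and then hand everything to Corollary~\ref{E-A-perp}. The paper instead applies Lemma~\ref{E-V-*} once to the pair $E\subset A^\perp$ (whose radical is $A^\perp\cap A$, and $E\cap A^\perp\cap A=E\cap A$ since $E\subset A^\perp$), obtaining an isometry $A^\perp\to A'^\perp$ sending $E$ to $E'$, and extends by Witt's theorem --- which is exactly the parenthetical alternative you describe at the end. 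The two arguments are reorderings of the same ideas built from the same lemmas; the paper's version is one step shorter because it never needs to produce an intermediate isometry defined only on $E$, while yours has the mild advantage of exhibiting the corollary as a formal consequence of Corollary~\ref{E-A-perp} plus the existence statement of Lemma~\ref{E-V-*}.
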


\begin{proof}
Again $(A^\perp)^\perp=A$ and $(A'^\perp)^\perp=A'$.
Applying Lemma \ref{E-V-*} on subspace $E$ of $A^\perp$ and
subspace $E'$ of $A'^\perp$,
we get an isometry $\phi_{1}:A^\perp\to A'^\perp$ that sends $E$ to $E'$.
It can be extended to
an isometry $\phi_{V}:V\to V'$ by Witt's theorem.
Then $E\overset{\phi_{V}}{\approx}E'$ and $A\overset{\phi_{V}}{\approx}A'$.
\end{proof}

\section{Simultaneous isometry of
(subspace, self-dual flag) pairs}\label{sect:self-dual-I}

From now on, let $(V, b)\approx (V', b')$ be {\em nonsingular}
isometric symmetric metric spaces over $\F$.
In this section, we determine when
an isometry $\phi:E\to E'$ between subspaces $E\subset V$ and $E'\subset V'$
can be extended to an isometry
 $\phi_{V}:V\to V'$  that sends a
given self-dual flag of $V$ to another given self-dual flag
of $V'$.

Let $\0$ (resp. $\0'$) denote the zero vector in $V$ (resp. $V'$).

A {\em flag} ${\mc V}$ of   $V$ is
a chain of embedding subspaces of $V$:
\beq
{\mc V}:=\left\{V_0\subset V_1\subset \cdots\subset V_k \right\}.
\eeq
For convenience, we set $V_0$ the zero space and $V_k$ the whole space throughout the paper,
and write ${\mc V}=\{V_i\}_{i=0,\cdots,k}$.
The flag ${\mc V}$  is called a {\em self-dual flag} if
\beq
\{V_k^\perp\subset V_{k-1}^\perp\subset\cdots\subset V_0^\perp\}
=\{V_0\subset V_1\subset \cdots\subset V_k\}.
\eeq
When ${\mc V}$ is a self-dual flag,
the subspaces in the ``first half part'' of ${\mc V}$ are
 totally isotropic.

Let ${\mc V}=\{V_i\}_{i=0,\cdots,k}$ (resp. ${\mc V}'=\{V_i'\}_{i=0,\cdots,k'}$)
be a flag of $V$ (resp. $V'$).
Given a linear map $\phi_V:V\to V'$, we write $\phi_V({\mc V})={\mc V}'$ if
$k=k'$ and $\phi_V(V_i)=V_i'$ for $i=0,\cdots,k$.
Denote ${\mc V}\approx {\mc V}'$ or ${\mc V}\overset{\phi_V}{\approx}{\mc V}'$
(when $\phi_V$ is given)
if there is an isometry $\phi_V:V\to V'$ such that $\phi_V({\mc V})={\mc V}'$.

\begin{lem}
Suppose ${\mc V}$ and ${\mc V}'$ defined above are self-dual flags.
Then ${\mc V}\approx {\mc V}'$ if and only if $k=k'$ and
$\dim V_i=\dim V_i'$ for $i=1,\cdots,\lfloor\frac k2\rfloor$.
\end{lem}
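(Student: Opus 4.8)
The plan is to prove both directions of this characterization of when two self-dual flags are isometric. The forward direction is immediate: if $\phi_V:V\to V'$ is an isometry with $\phi_V(\mc V)=\mc V'$, then by definition $k=k'$, and since $\phi_V$ is a linear bijection, $\dim V_i=\dim\phi_V(V_i)=\dim V_i'$ for all $i$ — in particular for $i=1,\dots,\lfloor k/2\rfloor$. (In fact all dimensions match, but the self-duality forces $\dim V_i=\dim V-\dim V_{k-i}$ by Lemma~\ref{space}(2), so the "first half" data already determines everything.)

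For the converse, assume $k=k'$ and $\dim V_i=\dim V_i'$ for $i=1,\dots,\lfloor k/2\rfloor$. First I would observe that the remaining dimension equalities come for free: applying $\perp$ to the self-duality relation gives $V_i^\perp=V_{k-i}$, so by Lemma~\ref{space}(2) $\dim V_{k-i}=\dim V-\dim V_i$, and likewise in $V'$; combined with $\dim V=\dim V'$ (from $V\approx V'$) this yields $\dim V_i=\dim V_i'$ for \emph{all} $i=0,\dots,k$. Next, recall that the lower half subspaces $V_i$ with $i\le\lfloor k/2\rfloor$ are totally isotropic (as noted in the text just before the lemma), and similarly for the $V_i'$.

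The construction then proceeds by building the isometry on the totally isotropic "core" first and extending outward. Set $m=\lfloor k/2\rfloor$. I would construct a linear bijection $\psi:V_m\to V_m'$ that respects the sub-chain $V_1\subset\cdots\subset V_m$, i.e. $\psi(V_i)=V_i'$ for $i\le m$; this is elementary linear algebra since all the dimensions match — just pick compatible bases adapted to the filtration. Since $V_m$ and $V_m'$ are totally isotropic, $\psi$ is automatically an isometry (both forms restrict to zero). Now apply Lemma~\ref{thm:Witt-deg}, or more directly Witt's theorem, to extend $\psi$ to an isometry $\phi_V:V\to V'$. It remains to check $\phi_V(\mc V)=\mc V'$. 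For $i\le m$ this holds by construction. For $i>m$, write $i=k-j$ with $j<k-m\le m$ (when $k$ is even, $j\le m-1$; when $k$ is odd, $j\le m$), so $V_i=V_{k-j}=V_j^\perp$ with $j\le m$; by Lemma~\ref{isometry}(1), $\phi_V(V_j^\perp)=\phi_V(V_j)^\perp=(V_j')^\perp=V_{k-j}'=V_i'$. (The middle index $i=k-m$ when $k$ is odd corresponds to $j=m$, which is covered.) Hence $\phi_V(\mc V)=\mc V'$.

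The only mildly delicate point is bookkeeping the index ranges in the two parity cases for $k$ — making sure that every index $i\in\{0,\dots,k\}$ is either $\le\lfloor k/2\rfloor$ (handled by the explicit construction of $\psi$) or of the form $k-j$ with $j\le\lfloor k/2\rfloor$ (handled by perpendicularity), with no gap and the correct treatment of the self-dual middle term when $k$ is odd. I expect no genuine obstacle: the totally isotropic nature of the lower half means the core map costs nothing to make isometric, and Witt's theorem (already available) does all the real work.
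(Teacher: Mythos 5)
Your proof is correct and follows exactly the paper's route: build a filtration-preserving linear bijection on the totally isotropic subspace $V_{\lfloor k/2\rfloor}$ (automatically an isometry), extend by Witt's theorem, and recover the upper half of the flag by taking perpendiculars. Your write-up merely spells out the index bookkeeping that the paper dismisses with ``Clearly.''
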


\begin{proof}
Only the sufficient part is nontrivial.
By the sufficient conditions in the lemma, we can construct a linear bijection
$\phi: V_{\lfloor\frac k2\rfloor}\to V'_{\lfloor\frac k2\rfloor}$
such that $\phi(V_i)=V_i'$ for $i=1,\cdots,\lfloor\frac k2\rfloor$.
Then $\phi$ is an isometry of totally isotropic subspaces.
By Witt's theorem $\phi$ can be extended to an isometry
$\phi_{V}:V\to V'$.
Clearly ${\mc V}\overset{\phi_V}{\approx}{\mc V}'$.
\end{proof}

The criteria for the isometry  of two generic flags
will be determined in Section \ref{sect:flag}.

Let ${\mc V}$ and ${\mc V}'$ be generic flags with
$k=k'$.
Given $E\subset V$ and $E'\subset V'$, we denote
\beq\label{E_iE_i'}
E_i:=E\cap V_i,\qquad E_i':=E'\cap V_i',\qquad
\text{for}\quad i=0,\cdots, k.
\eeq

\begin{thm}\label{thm:iso-Witt-ext}
Let ${\mc V}:=\{V_i\}_{i=0,\cdots,k}$ and
${\mc V}':=\{V_i'\}_{i=0,\cdots,k}$ be isometric self-dual flags of
$V$ and $V'$ respectively.
Then an isometry $\phi:E\to E'$ from $E\subset V$ to $E'\subset V'$ can be extended to an isometry
$\phi_{V}: V\to V'$ that sends ${\mc V}$ to ${\mc V}'$,
if and only if $\phi(E_i)=E_i'$ for $i=0,\cdots,k.$
\end{thm}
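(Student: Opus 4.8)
The necessity is immediate: if $\phi_V$ extends $\phi$ and sends $\mc V$ to $\mc V'$, then for each $i$ we have $\phi(E_i)=\phi_V(E\cap V_i)=\phi_V(E)\cap\phi_V(V_i)=E'\cap V_i'=E_i'$. So the work is in the sufficiency direction, where I am given $\phi(E_i)=E_i'$ for all $i$ and must build the extension.

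The plan is to induct on $k$, processing the self-dual flag from the outside in. Since $\mc V$ is self-dual, the ``outer'' spaces are the perps of the ``inner'' ones; the key reduction step should be to pass from the pair $(V,V')$ with the full flag to the pair $(V_{k-1},V_{k-1}')$ with the truncated flag $\{V_i\}_{i=0,\dots,k-1}$, after first arranging that the restriction of $\phi$ behaves correctly on $V_1=V_{k-1}^\perp$. Concretely: first extend $\phi$ (via Witt's theorem applied on $V\approx V'$, or rather a refinement of it) to an isometry that already sends $V_1$ to $V_1'$ and $V_{k-1}$ to $V_{k-1}'$ — this is possible because $V_1,V_1'$ are totally isotropic of equal dimension, $E_1=E\cap V_1$ maps to $E_1'$ by hypothesis, and then $V_{k-1}=V_1^\perp$, $V_{k-1}'=(V_1')^\perp$ are forced. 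Here I expect to need a lemma that extends an isometry $\phi:E\to E'$ to one additionally sending a given totally isotropic $V_1$ to $V_1'$ with $\phi(E\cap V_1)=E'\cap V_1'$; this is essentially Corollary \ref{E-A-perp}-type reasoning or can be extracted from the argument of Lemma \ref{thm:iso-Witt-ext}'s self-dual-flag lemma above — if such a statement is not already available it must be proved separately, and this is the first place care is needed.

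Once $V_1\mapsto V_1'$ and $V_{k-1}\mapsto V_{k-1}'$ are in place, restrict attention to $V_{k-1}$, which is nonsingular only after quotienting by its radical $V_1$ (since $V_{k-1}=V_1^\perp\supset V_1$, the space $V_{k-1}$ is singular with radical $V_1$). So the induction cannot be run naively on nonsingular spaces; instead I would work with $V_{k-1}/V_1$, which is nonsingular and carries the induced self-dual flag $\{V_i/V_1\}_{i=1,\dots,k-1}$, together with the induced isometry on $(E\cap V_{k-1} + V_1)/V_1$ — and verify that the hypothesis $\phi(E_i)=E_i'$ descends to the analogous equality for this quotient data. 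Lemma \ref{thm:Witt-deg} and Lemma \ref{space}(7) are the tools for handling the passage between $V_{k-1}$, its radical, and the quotient. After obtaining by induction an isometry of $V_{k-1}/V_1 \to V_{k-1}'/V_1'$ respecting the truncated flag and the image of $E$, I lift it to $V_{k-1}\to V_{k-1}'$ (agreeing with $\phi$ on $V_1$ and on $E\cap V_{k-1}$), then extend from $V_{k-1}=V_1^\perp$ to all of $V$ by Witt's theorem, which automatically keeps $V_k=V$, $V_0=0$ correct and, since $V_1\mapsto V_1'$, keeps $V_{k-1}\mapsto V_{k-1}'$ correct.

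The main obstacle I anticipate is the bookkeeping in the quotient/lifting step: ensuring simultaneously that the extension agrees with $\phi$ on $E$ (not just on $E\cap V_{k-1}$, so one also needs a complement of $E\cap V_{k-1}$ inside $E$ to be handled, which is where $E$ can stick out past $V_{k-1}$ only in the ``top'' layer), that it respects every $V_i$, and that the descended hypothesis $\phi(E_i)=E_i'$ is genuinely equivalent to what the inductive call requires — this equivalence relies on the self-duality forcing $V_i^\perp=V_{k-i}$ together with parts (3),(5),(7) of Lemma \ref{space}, and is the technical heart of the argument. The base case $k\le 2$ reduces to the totally-isotropic situation plus plain Witt's theorem and is straightforward.
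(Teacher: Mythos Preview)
Your outline diverges from the paper's route and, as written, has a genuine gap at precisely the point you flagged as the ``main obstacle.''

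The paper does \emph{not} quotient. It first isolates and proves the $k=3$ case as a standalone lemma (Lemma~\ref{thm:iso-Witt-gen}); this is the technical core, established by extending $\phi$ first to $E+(V_1\cap E^\perp)$ via Corollary~\ref{E-A-perp}, then to $E+V_1$ by a Riesz-representation construction that forces the pairing between $\overline V\subset V_1$ and a complement $\widetilde E$ of $E_2$ in $E$ to match, and finally by Witt. With the $k=3$ lemma in hand, the general case is obtained by staying inside $V$ throughout: apply the $k=3$ lemma to $\{\0\subset V_1\subset V_{k-1}\subset V\}$ to extend $\phi$ to an isometry $\phi_1$ on $E+V_1$ sending $V_1\to V_1'$; then apply it again to $\{\0\subset V_2\subset V_{k-2}\subset V\}$ with $E+V_1$ now playing the role of $E$ (the hypothesis $\phi_1((E+V_1)\cap V_i)=(E'+V_1')\cap V_i'$ holds by Lemma~\ref{space}(5)); and iterate. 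Because $E$ is contained in the domain at every step, the ``overhang'' $E\setminus V_{k-1}$ never has to be reconciled after the fact.

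Your quotient strategy does not close this gap. After lifting the inductively produced isometry of $V_{k-1}/V_1$ to some $\phi_{k-1}:V_{k-1}\to V_{k-1}'$ and extending by Witt to all of $V$, the extension has no reason to agree with $\phi$ on $E\setminus V_{k-1}$. If instead you try to combine $\phi_{k-1}$ with $\phi|_E$ (they do agree on $E_{k-1}$), the combination is a linear bijection but need not preserve $b$ between vectors of $E\setminus V_{k-1}$ and vectors of $V_{k-1}\setminus E$; this is exactly the compatibility problem that the paper's $E\mapsto E+V_1\mapsto E+V_2\mapsto\cdots$ scheme is designed to avoid. Note also that your ``step~1'' already needs essentially the full $k=3$ lemma, not merely Corollary~\ref{E-A-perp}-type reasoning: since $E$ is not in general orthogonal to $V_1$, arranging $V_1\mapsto V_1'$ while keeping $\phi$ fixed on $E$ requires the Riesz-representation argument, which is where most of the work lies.
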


The result plays a key role in the author's classification
of Borel subgroup orbits of classical symmetric subgroups
on multiplicity-free flag varieties (\cite{huang01, huang02, huang03}, preprints).
The cases of  $k\le 2$ may be viewed as special cases  of $k=3$
by adding certain repeated subspace(s) on
  self-dual flags of $k\le 2$.
Surprisedly, $k=3$ implies the general cases.

\begin{lem}\label{thm:iso-Witt-gen}
For $k=3$, Theorem \ref{thm:iso-Witt-ext} is true.
\end{lem}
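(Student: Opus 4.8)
\emph{Necessity.} If an isometry $\phi_V\colon V\to V'$ extends $\phi$ and sends $\mc V$ to $\mc V'$, then, $\phi_V$ being a linear bijection with $\phi_V(E)=E'$, we get $\phi(E_i)=\phi_V(E\cap V_i)=\phi_V(E)\cap\phi_V(V_i)=E'\cap V_i'=E_i'$ for each $i$.

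\emph{Sufficiency.} Write $W:=V_1$ and $W':=V_1'$. As $\mc V$ is self-dual with $k=3$ we have $V_0=0$, $V_3=V$, $V_2=W^\perp$, and $W$, $W'$ are totally isotropic; and $\mc V\approx\mc V'$ gives $\dim W=\dim W'$. The hypotheses read $\phi(E\cap W)=E'\cap W'$ and $\phi(E\cap W^\perp)=E'\cap W'^\perp$ (the cases $i=0,3$ being trivial). The plan is first to extend $\phi$ to an isometry $\psi\colon E+W\to E'+W'$ with $\psi(W)=W'$; Witt's theorem then extends $\psi$ to an isometry $\phi_V\colon V\to V'$, which, since $W\subseteq E+W$, restricts to $\phi$ on $E$, sends $W$ to $W'$, hence sends $W^\perp$ to $W'^\perp$ by Lemma \ref{isometry}(1), so that $\phi_V(\mc V)=\mc V'$.

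To build $\psi$ I look for a linear bijection $\sigma\colon W\to W'$ agreeing with $\phi$ on $E\cap W$; the combination $\psi$ of $\phi$ and $\sigma$ is then a linear bijection from $E+W$ onto $E'+W'$, and evaluating $b$ against $b'$ on the spanning set $E\cup W$ shows that $\psi$ is an isometry if and only if $b(e,w)=b'(\phi(e),\sigma(w))$ for all $e\in E$, $w\in W$ (the $E\times E$ block is handled by $\phi$, the $W\times W$ block vanishes by total isotropy). Fixing a complement $X$ of $E_2=E\cap W^\perp$ in $E$ and putting $X':=\phi(X)$ (a complement of $E_2'$ in $E'$), and using $E_2\perp W$ and $\phi(E_2)=E_2'\subseteq W'^\perp\perp W'$, this condition collapses to
\[
b(x,w)=b'\!\bigl(\phi(x),\sigma(w)\bigr)\qquad\text{for all }x\in X,\ w\in W .
\]
Now set $K:=W\cap E^\perp$ and $K':=W'\cap E'^\perp$; since $E_2\perp W$ one checks $W\cap X^\perp=K$ and $W'\cap X'^\perp=K'$, so Lemma \ref{space}(8) applied to $(X,W)$ and to $(X',W')$ gives $\dim K=\dim W-\dim X=\dim W'-\dim X'=\dim K'$. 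The displayed identity forces $\sigma(K)=K'$ and, through the nondegenerate pairings of $X$ with $W/K$ and of $X'$ with $W'/K'$ induced by the forms, fixes $\sigma(w)$ modulo $K'$ for each $w$.

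It remains to realise all of this by a single linear bijection that also extends $\phi|_{E\cap W}$. Setting $R:=(E\cap W)\cap K$, I split $W=(E\cap W)\oplus K_1\oplus C_0$ with $K=R\oplus K_1$, define $\sigma=\phi$ on $E\cap W$, let $\sigma$ carry $K_1$ isomorphically onto a complement of $\phi(R)$ in $K'$, and choose $\sigma$ on $C_0$ so as to satisfy the displayed identity (possible since $b'$ pairs $X'$ with $W'/K'$ nondegenerately); a direct dimension/independence count then shows this $\sigma$ is bijective, sends $K$ onto $K'$, and obeys the displayed identity. The ingredients of that count are $\dim W=\dim W'$, $\dim(E\cap W)=\dim(E'\cap W')$, $\dim K=\dim K'$, and $\phi(R)=R'$ (with $R'$ the analogue of $R$ for $E'$), the last holding because $\phi$ carries the radical $E\cap E^\perp$ of $E$ onto the radical of $E'$ and $R=(E\cap W)\cap(E\cap E^\perp)$. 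This construction of $\sigma$ is the crux: producing one linear bijection $W\to W'$ that at once extends $\phi|_{E\cap W}$, sends $K$ onto $K'$, and induces the prescribed isomorphism on $W/K$. These three demands are coupled through the subspaces $E\cap W$, $K$ and the radical of $E$, so matching of dimensions alone does not suffice—one must keep track of how $\phi$ meets $E\cap E^\perp$, and it is here that both hypotheses $\phi(E_1)=E_1'$ and $\phi(E_2)=E_2'$ are used. Once $\sigma$ is found, $\psi$ is its combination with $\phi$ and $\phi_V$ comes from Witt's theorem as above.
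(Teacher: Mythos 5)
Your proof is correct and follows essentially the same route as the paper's: both hinge on the total isotropy of $V_1$, the nondegenerate pairing between a complement $X$ of $E_2$ in $E$ and $V_1/(V_1\cap E^\perp)$ (the paper's Riesz map $\tau_b$), the dimension count from Lemma \ref{space}(8), and the fact that $\phi$ carries $E_1\cap E^\perp$ onto $E_1'\cap E'^\perp$; the only difference is packaging, since the paper extends $\phi$ in two stages (first across $V_1\cap E^\perp$ via Corollary \ref{E-A-perp}, then across a complement $\overline V$ via the Riesz isomorphism) while you assemble $\sigma$ on $V_1=(E\cap V_1)\oplus K_1\oplus C_0$ in one step. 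The bijectivity of $\sigma$, which you leave as a ``dimension/independence count,'' does go through, because your displayed identity forces $\sigma^{-1}(K')\subseteq K$ while $\sigma|_K$ maps onto $K'$ by construction.
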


\begin{proof}[Proof of Theorem \ref{thm:iso-Witt-ext}
by Lemma \ref{thm:iso-Witt-gen}]
It suffices to prove the sufficient part.
\ben
\item
Consider the isometry $\phi: E\to E'$ and the self-dual flags
$$
\left\{\{\0\}\subset V_1\subset V_{k-1}\subset V\right\}
\qquad\text{and}\qquad
\left\{\{\0'\}\subset V_1'\subset V_{k-1}'\subset V'\right\}.
$$
Since $\phi(E_i)=E_i'$ for $i=0,1,k-1,k$,
by Lemma \ref{thm:iso-Witt-gen} the isometry $\phi$ can be extended to
an isometry $\phi_1:V\to V'$ that sends $V_1$ to $V_1'$.
The restricted map
$\phi_1|_{E+V_1}$ is an isometry from
$E+V_1$ to $E'+V_1'$,
which sends $E$ to $E'$ and $V_1$ to $V_1'$ respectively.

\item
Consider the isometry
 $\phi_1|_{E+V_1}$ and the self-dual flags
$$
\left\{\{\0\}\subset V_2\subset V_{k-2}\subset V\right\}
\qquad\text{and}\qquad
\left\{\{\0'\}\subset V_2'\subset V_{k-2}'\subset V'\right\}.
$$
Clearly
$V_1\subset V_i$ and $V_1'\subset V_i'$ for $i=2,k-2$.
By Lemma \ref{space}(5),
\BEA
\phi_1|_{E+V_1}\left((E+V_1)\cap V_i\right)
&=& \phi_1(E\cap V_i+V_1)
=\phi(E_i)+\phi_1(V_1)
\\
&=& E_i'+V_1'=(E'+V_1')\cap V_i'
\qquad\text{for}\quad i=2,k-2.
\EEA
By Lemma \ref{thm:iso-Witt-gen}, the map $\phi_1|_{E+V_1}$ can be extended to
an isometry $\phi_2:V\to V'$ that sends $V_2$ to $V_2'$.
The restricted map $\phi_2|_{E+V_2}$ is an isometry from $E+V_2$ to $E'+V_2$, which
sends $E$ to $E'$, $V_1$ to $V_1'$, and $V_2$ to $V_2'$ respectively.

\item
Repeat the above process.
Eventually, $\phi:E\to E'$ can be extended to an isometry
$\phi_{\lfloor\frac k2\rfloor}: V\to V'$
that sends $V_i$ to $V_i'$ for $i=1,\cdots, \lfloor\frac k2\rfloor$.
Then $\phi_{V}:=\phi_{\lfloor\frac k2\rfloor}$ is an extension of $\phi$,
with $\phi_{V}(V_i)=V_i'$ and
$\phi_{V}(V_{k-i})=\phi_{V}(V_i^\perp)=\phi_{V}(V_i)^\perp=V'_{k-i}$
for $i=1,\cdots, \lfloor\frac k2\rfloor$.
This completes the proof.
\een
\end{proof}

\begin{proof}[Proof of Lemma \ref{thm:iso-Witt-gen}]
Only the sufficient part is nontrivial.
Suppose
$$
{\mc V}:=\{V_i\}_{i=0,\cdots,3}
\qquad\text{and}\qquad
{\mc V}':=\{V_i'\}_{i=0,\cdots,3}
$$
are isometric self-dual flags of $V$ and $V'$ respectively,
and the isometry $\phi:E\to E'$ satisfies that
$\phi(E_i)=E_i'$ for $i=0,1,2,3$.
We extend  $\phi$ to an isometry
$\phi_1:E+V_1\to E'+V_1'$
by the self-explained diagram:
\[
\xymatrix{
E_1 \ar@{^{(}->}[r] \ar@{->}[d]_{\phi|_{V_1}}
    &E_1+(V_1\cap E^\perp) \ar@{^{(}->}[r] \ar@{->}[d]_{\phi_0|_{V_1}}
            &V_1 \ar@{->}[d]_{\phi_1|_{V_1}}
\\
E_1' \ar@{^{(}->}[r]
    &E_1'+(V_1'\cap E'^\perp) \ar@{^{(}->}[r]
            &V_1'
}
\]
The total isotropies of $V_1$ and $V_1'$ make the constructions of
isometries much easier.

\ben
\item Extend $\phi: E\to E'$ to
an isometry $\phi_0: E+(V_1\cap E^\perp)\to E'+(V'_1\cap E'^\perp)$.

By Lemma \ref{space},
\BEA
\dim (V_1\cap E^\perp)
&=& \dim V_1+\dim E^\perp-\dim (V_1+E^\perp)
\\
&=& \dim V_1+(\dim V-\dim E)-[\dim V-\dim (V_1+E^\perp)^\perp]
\\
&=& \dim V_1-\dim E+\dim (V_1^\perp\cap E^{\perp\perp})
\\
&=& \dim V_1-\dim E+\dim E_2.
\EEA
Similar computation on $\dim (V_1'\cap E'^\perp)$
shows that $\dim (V_1\cap E^\perp)=\dim (V_1'\cap E'^\perp)$.
Then $V_1\cap E^\perp\approx V_1'\cap E'^\perp$ since both spaces
are totally isotropic. Notice that
$$
E\cap (V_1\cap E^\perp) = E_1\cap E^\perp,
\quad
E'\cap (V_1'\cap E'^\perp) = E_1'\cap E'^\perp,
\quad
\phi(E_1\cap E^\perp) = E_1'\cap E'^\perp.
$$
Applying Corollary \ref{E-A-perp} on $\phi: E\to E'$,
$A:=V_1\cap E^\perp$ and $A':=V_1'\cap E'^\perp$,
we get an isometric extension of $\phi$ that sends
$V_1\cap E^\perp$ to $V_1'\cap E'^\perp$.
That is, $\phi$ can be extended to an isometry
$$
\phi_0:
E+(V_1\cap E^\perp)\to E'+(V'_1\cap E'^\perp).
$$

\item Extend $\phi_0: E+(V_1\cap E^\perp)\to E'+(V'_1\cap E'^\perp)$ to
an isometry $\phi_1: E+V_1\to E'+V'_1$.

In $V$,
select $\widetilde E$ such that $E=E_2\oplus \widetilde E$.
Select $\overline E\subset E_1$ and expand it to $\overline V\subset V_1$
 such that
\beq\label{E_1-V_1}
E_1=(E_1\cap E^\perp)\odot \overline E,\qquad
V_1=(V_1\cap E^\perp)\odot \overline V.
\eeq
Then by Lemma \ref{space}(8),
\beq\label{dim-overline-V}
\dim \overline V
=\dim V_1-\dim (V_1\cap E^\perp)
=\dim E-\dim (E\cap V_1^\perp)
=\dim\widetilde E.
\eeq

The Riesz representation  $\tau_b: \overline V\to (\widetilde E)^*$,
defined by
$$
\tau_b(\v)(\e):=b(\e,\v)
\quad\text{for}\quad
\v\in \overline V,\ \ \e\in \widetilde E,
$$
 is injective since $\overline V\subset V_1= V_2^\perp\subset E_2^\perp$ and so
$$\ker\tau_b=\overline V\cap\widetilde E^\perp=\overline V\cap E_2^\perp\cap \widetilde E^\perp
=\overline V\cap (E_2+\widetilde E)^\perp=\overline V\cap E^\perp=\{\0\}.$$
Hence $\tau_b$ is an isomorphism
by \eqref{dim-overline-V}.

In $V'$,
let $\widetilde E':=\phi_0(\widetilde E)$ so that $E'=E_2'\oplus\widetilde E'$;
Let $\overline E':=\phi_0(\overline E)$.
By \eqref{E_1-V_1},
$$
E'_1 =\phi_0(E_1\cap E^\perp)\odot \phi_0(\overline E)=(E'_1\cap E'^\perp)\odot\overline E'.
$$
Expand $\overline E'$ to $\overline V'\subset V_1'$ such that
$
V_1'=(V_1'\cap E'^\perp)\odot \overline V'.
$
Likewise, $\dim \overline V'=\dim \widetilde E'$.
The Riesz representation $\tau_{b'}:\overline V'\to (\widetilde E')^*$,
defined by
$$\tau_{b'}(\v')(\e'):=b'(\e', \v')
\quad\text{for}\quad
\v'\in\overline V', \ \ \e'\in\widetilde E',
$$
is also an isomorphism.

We now construct an isometry $\phi_{\beta}:\overline V\to\overline V'$
such that $\phi_0$ and $\phi_\beta$ agree on the intersection of domains,
and the combination $\phi_1$ of $\phi_0$ and $\phi_\beta$ is an isometry.

Let $\phi_{\beta}$
be induced by the commutative diagram:
\[
\xymatrixcolsep{4pc}\xymatrix{
\overline V \ar@{->}[r]^{\tau_b}  \ar@{->}[d]_{\phi_{\beta}}
    &(\widetilde E)^* \ar@{->}[d]^{(\phi_0^{-1}|_{\widetilde E'})^*}
\\
\overline V' \ar@{->}[r]^{\tau_{b'}}
    &(\widetilde E')^*
}
\]
that is, $\phi_{\beta}:\overline V\to\overline V'$ is uniquely determined by
\beq\label{phi_1}
b'\left(\phi_0(\e), \phi_{\beta}({\mb v})\right)
=b(\e,  {\mb v})
\qquad
\text{for \ }
{\mb v}\in \overline V
\ \text{ and } \ \e\in \widetilde E.
\eeq
Then $\phi_{\beta}=
{\tau_{b'}}^{-1}\circ (\phi_0^{-1}|_{\widetilde E'})^*\circ\tau_b$
is an isometry between totally isotropic spaces $\overline V$ and $\overline V'$.

By Lemma \ref{space}(5) and \eqref{E_1-V_1},
\[
\Dom(\phi_0)\cap V_1
= \left[E+(V_1\cap E^\perp)\right]\cap V_1
=E_1+(V_1\cap E^\perp)
=\overline E\odot(V_1\cap E^\perp).
\]
So by \eqref{E_1-V_1}, $\overline V\subset V_1$ and thus
$$\Dom(\phi_0)\cap\Dom(\phi_{\beta})
=\Dom(\phi_0)\cap V_1\cap\overline V=\overline E.$$
Given $\overline\e\in\overline E$,
we have $\phi_0(\overline\e)\in\overline E'\subset \Ima(\phi_{\beta})$,
and by \eqref{phi_1}
$$
b'\left(\phi_0(\e), \phi_{\beta}(\overline\e)\right)
=b(\e, \overline\e)
=b'\left(\phi_0(\e), \phi_0(\overline\e)\right)
\quad\text{for all}\quad
\e\in\widetilde E.
$$
So
the isometries $\phi_0$ and $\phi_{\beta}$ agree on
$\overline E$, the intersection of domains.
Therefore, the combination $\phi_1$ of $\phi_0$ and $\phi_{\beta}$ is
 a linear bijection and
$$
\Dom(\phi_1)=
\Dom(\phi_0)+\Dom(\phi_{\beta})
=E+(V_1\cap E^\perp)+\overline V=E+V_1.
$$
Likewise, $\Ima(\phi_1)=E'+V_1'$.

To prove that $\phi_1$ is an isometry, it remains to show that
\beq\label{b'b}
b'(\phi_1(\u),\phi_1(\v))= b(\u,\v)
\eeq
for $\u\in\Dom(\phi_0)$ and $\v\in\Dom(\phi_{\beta})$.
By
$
\widetilde E\cap [E_2+(V_1\cap E^\perp)]
\subset \widetilde E\cap V_2=\{\0\}
$,
\beq
\Dom(\phi_0)=(\widetilde E\oplus E_2)+(V_1\cap E^\perp)
=\widetilde E\oplus [E_2+(V_1\cap E^\perp)].
\eeq
By
$
E_2+(V_1\cap E^\perp)\subset V_2\subset\overline V^\perp=\Dom(\phi_{\beta})^\perp
$
and
$$
\phi_1\left(E_2+(V_1\cap E^\perp)\right)
=E_2'+(V_1'\cap E'^\perp)\subset V_2'
\subset\overline V'^\perp=\Ima(\phi_{\beta})^\perp,
$$
it suffices to prove \eqref{b'b} for $\u\in\widetilde E$ and
$\v\in\Dom(\phi_{\beta})=\overline V$.
This is exactly \eqref{phi_1}.

Therefore, $\phi_1: E+V_1\to E'+V_1'$ is an isometry, where
$$
\phi_1(V_1)=\phi_0(V_1\cap E^\perp)\odot\phi_{\beta}(\overline V)
=(V_1'\cap E'^\perp)\odot \overline V'=V_1'.
$$
\een

Finally, by Witt's theorem, $\phi_1$ can be extended to
an isometry $\phi_{V}:V\to V'$. Then
$\phi_{V}|_E=\phi$, $\phi_{V}(V_1)=\phi_1(V_1)=V_1'$,
and $\phi_{V}(V_2)=\phi_{V}(V_1^\perp)=\phi_{V}(V_1)^\perp=V_2'$.
\end{proof}

\section{Applications of Theorem \ref{thm:iso-Witt-ext} to Witt's Decomposition}

In this section, we apply Theorem \ref{thm:iso-Witt-ext} to solve the following problem:
When can an isometry of subspaces  be extended to an isometry of
whole spaces that preserves the corresponding Witt's decompositions
and the corresponding self-dual flags ``compatible'' with the
Witt's decompositions?

\begin{lemma}\label{V=A+B}
 Let $V=A\oplus B$ and $V'=A'\oplus B'$ be  vector space decompositions
such that $\dim A=\dim A'$ and $\dim B=\dim B'$. Then a linear bijection
$\phi:E\to E'$ between $E\subset V$ and $E'\subset V'$
can be extended to a  linear bijection
$\phi_V:V\to V'$ such that $\phi_V(A)=A'$ and $\phi_V(B)= B'$  if and only if
$\phi(E\cap A)= E'\cap A'$ and $\phi(E\cap B)= E'\cap B'$.
\end{lemma}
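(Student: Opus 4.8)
The plan is to treat Lemma~\ref{V=A+B} as a purely linear-algebraic statement (no bilinear form is involved) and to build $\phi_V$ explicitly from its two ``block'' components $A\to A'$ and $B\to B'$. \emph{Necessity} is immediate: if $\phi_V(A)=A'$ and $\phi_V|_E=\phi$, then $\phi(E\cap A)=\phi_V(E\cap A)=\phi_V(E)\cap\phi_V(A)=E'\cap A'$, and likewise $\phi(E\cap B)=E'\cap B'$.

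For \emph{sufficiency}, let $\pi_A:V\to A$ and $\pi_B:V\to B$ be the projections attached to $V=A\oplus B$ (so $v=\pi_A(v)+\pi_B(v)$), and let $\pi_{A'},\pi_{B'}$ be the analogous projections for $V'=A'\oplus B'$. Any linear bijection $\phi_V:V\to V'$ with $\phi_V(A)=A'$ and $\phi_V(B)=B'$ is of the form $\phi_V=\phi_A\oplus\phi_B$ for linear bijections $\phi_A:A\to A'$ and $\phi_B:B\to B'$; moreover $\phi_V|_E=\phi$ forces, for every $e\in E$,
\[
\phi_A(\pi_A(e))=\pi_{A'}(\phi(e)),\qquad \phi_B(\pi_B(e))=\pi_{B'}(\phi(e)).
\]
So I would first define $\psi_A:\pi_A(E)\to A'$ by $\psi_A(\pi_A(e)):=\pi_{A'}(\phi(e))$ and $\psi_B:\pi_B(E)\to B'$ by $\psi_B(\pi_B(e)):=\pi_{B'}(\phi(e))$, then extend each to an isomorphism of the ambient summand, and finally set $\phi_V:=\phi_A\oplus\phi_B$.

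The crux is the \emph{well-definedness and injectivity of $\psi_A$ and $\psi_B$}, which is exactly where the two hypotheses enter. For $\psi_A$: if $e,e'\in E$ with $\pi_A(e)=\pi_A(e')$, then $e-e'\in E\cap\ker\pi_A=E\cap B$, so $\phi(e-e')\in\phi(E\cap B)=E'\cap B'\subset\ker\pi_{A'}$ and hence $\pi_{A'}(\phi(e))=\pi_{A'}(\phi(e'))$, giving well-definedness; $\psi_A$ is then linear. For injectivity: if $\pi_{A'}(\phi(e))=0$ then $\phi(e)\in E'\cap B'=\phi(E\cap B)$, so $e\in E\cap B$ and $\pi_A(e)=0$. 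Thus $\psi_A$ is a linear isomorphism of $\pi_A(E)$ onto its image $\pi_{A'}(\phi(E))=\pi_{A'}(E')$; symmetrically for $\psi_B$. In particular $\dim\pi_A(E)=\dim\pi_{A'}(E')$, and combined with the given $\dim A=\dim A'$ this lets me pick complements $A=\pi_A(E)\oplus C$ and $A'=\pi_{A'}(E')\oplus C'$ with $\dim C=\dim C'$, choose any isomorphism $C\to C'$, and let $\phi_A$ be the direct sum of $\psi_A$ with it; likewise build $\phi_B$ using $\dim B=\dim B'$.

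It then remains to verify that $\phi_V:=\phi_A\oplus\phi_B$ works: it is a linear bijection $V\to V'$ with $\phi_V(A)=A'$ and $\phi_V(B)=B'$ by construction, and for $e\in E$,
\[
\phi_V(e)=\phi_A(\pi_A(e))+\phi_B(\pi_B(e))=\psi_A(\pi_A(e))+\psi_B(\pi_B(e))=\pi_{A'}(\phi(e))+\pi_{B'}(\phi(e))=\phi(e),
\]
so $\phi_V|_E=\phi$. I expect no serious obstacle: once $\psi_A,\psi_B$ are seen to be well defined and injective with the stated images, the rest is bookkeeping with direct sums and dimension counts. The only point needing care is that $E$ need \emph{not} equal $(E\cap A)\oplus(E\cap B)$, which is precisely why working with the projections $\pi_A(E)$ and $\pi_B(E)$, rather than with a basis of $E$ adapted to the decomposition, is the clean route.
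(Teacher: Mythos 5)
Your proof is correct and follows essentially the same route as the paper, which proves the three-summand analogue (Lemma~\ref{thm:A+B+C}) by defining the component maps $\v_A\mapsto\phi(\v)_{A'}$ (your $\psi_A$), checking well-definedness and bijectivity from the hypothesis $\phi(E\cap B)=E'\cap B'$, extending each to the full summand, and taking the direct sum. Your treatment is slightly more explicit about the dimension count needed for the extension step, but the argument is the same.
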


The lemma is a special case of Lemma \ref{thm:A+B+C}. We refer to the proof there. In the above lemma,
every  $\v\in V$ can be uniquely decomposed as
$\v=\v_A+\v_B$ for $\v_A\in A$ and $\v_B\in B$.
Precisely, $\{\v_A\}=(\{\v\}+B)\cap A$ and $\{\v_B\}=(\{\v\}+A)\cap B$.
Likewise,
every   $\v'\in V'$ can be uniquely decomposed as
$\v'=\v_{A'}'+\v_{B'}'$ for $\v_{A'}'\in A'$ and $\v_{B'}'\in B'$.
Denote
$P_{A,B}(E):=\left[(E+B)\cap A\right]\oplus\left[(E+A)\cap B\right].$
Then $P_{A,B}(E)\supset E$.  Define a linear map
$\widetilde\phi_{A,B}:P_{A,B}(E)\to P_{A',B'}(E')$ by
\beq\label{widetilde-phi-def}
\widetilde\phi_{A,B}(\v_A):=\phi(\v)_{A'},\qquad
\widetilde\phi_{A,B}(\v_B):=\phi(\v)_{B'},\qquad
\text{for all}\quad \v\in E.
\eeq
From the proof of Lemma \ref{thm:A+B+C},
$\widetilde\phi_{A,B}$ is a well-defined bijective linear extension of
$\phi$. Moreover, every linear extension $\phi_V:V\to V'$ of $\phi$
that satisfies $\phi_V(A)=A'$ and $\phi_V(B)=B'$ is also an extension
of $\widetilde\phi_{A,B}$.

A {\em hyperbolic space}
is a nonsingular  metric space $V$ that has the
Witt decomposition $V=V^+\oplus V^-$, where both
$V^+$ and $V^-$ are maximal totally isotropic subspaces
of the same dimension  \cite{MR2125693}.
If ${\mc V}:=\{V_i\}_{i=0,\cdots,k}$ is a self-dual flag of $V$
such that $V_{\lfloor\frac k2\rfloor}\subset V^+$,
then $V_i\subset V^+$ for $i=0,\cdots,\lfloor\frac k2\rfloor$
and $V^+\subset V_i$ for $i=\lfloor\frac k2\rfloor+1,\cdots, k$.
There are two cases about ${\mc V}$:
\ben
\item $k$ even: Then $V_{\lfloor\frac k2\rfloor}=V_{\frac k2}=V^+$. Denote
 $\overline{\mc V}:={\mc V}$.
\item $k$ odd: Then
$\overline{\mc V}:=\{V_0, V_1,\cdots, V_{\lfloor\frac k2\rfloor}, V^+, V_{\lfloor\frac k2\rfloor+1},
\cdots, V_k\}$ is also a self-dual flag. It is a refinement of ${\mc V}$.

\een

The following theorem is a slight improvement of a
main theorem in \cite{huang02}.

\begin{thm}\label{thm:hyperspace}
Let $V=V^+\oplus V^-$ and $V'=V'^+\oplus V'^-$ be isometric hyperbolic spaces. Let ${\mc V}:=\{V_i\}_{i=0,\cdots,k}$ and
${\mc V}':=\{V_i'\}_{i=0,\cdots,k}$ be isometric self-dual flags of $V$ and
$V'$ respectively
such that $V_{\lfloor\frac k2\rfloor}\subset V^+$
and $V_{\lfloor\frac k2\rfloor}'\subset V'^+$.
Then an isometry $\phi:E\to E'$ between $E\subset V$ and $E'\subset V'$ can be extended
to an isometry $\phi_{V}:V\to V'$ that satisfies $\phi_V(V^+)=V'^+$,
$\phi_V(V^-)=V'^-$, and $\phi_V({\mc V})={\mc V}'$,
 if and only if
$\phi(E\cap V^+)=E'\cap V'^+$,
$\phi(E\cap V^-)=E'\cap V'^-$,
and the induced linear bijection $\widetilde\phi_{V^+, V^-}$ in \eqref{widetilde-phi-def}
is an isometry that satisfies
$$
\widetilde\phi_{V^+,V^-}(P_{V^+, V^-}(E)\cap V_i)=P_{V'^+, V'^-}(E')\cap V_i'
\qquad\text{for}\qquad i=1,\cdots,k.
$$
\end{thm}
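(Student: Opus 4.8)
The plan is to settle necessity by restriction, then to prove sufficiency in three stages: reduce to the case where $E$ is adapted to $V=V^+\oplus V^-$, use Theorem~\ref{thm:iso-Witt-ext} to produce an extension that respects the flag, and finally post‑compose with a ``straightening'' automorphism of $V'$ that carries the image of $V^-$ onto $V'^-$ while fixing everything else. For necessity, if such $\phi_V$ exists then intersecting it with the $\phi_V$‑invariant subspaces $V^+,V^-,V_i$ gives $\phi(E\cap V^+)=E'\cap V'^+$, $\phi(E\cap V^-)=E'\cap V'^-$, and the flag conditions; and since $\phi_V$ sends $V^+$ to $V'^+$ and $V^-$ to $V'^-$ it extends $\widetilde\phi_{V^+,V^-}$ by the remark after Lemma~\ref{V=A+B}, so $\widetilde\phi_{V^+,V^-}=\phi_V|_{P_{V^+,V^-}(E)}$ is an isometry sending $P_{V^+,V^-}(E)\cap V_i$ to $P_{V'^+,V'^-}(E')\cap V_i'$.

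For sufficiency I would first replace $(E,\phi)$ by $(P,\widetilde\phi)$, where $P:=P_{V^+,V^-}(E)$ and $\widetilde\phi:=\widetilde\phi_{V^+,V^-}$. By hypothesis $\widetilde\phi\colon P\to P'$ is an isometry extending $\phi$; reading off \eqref{widetilde-phi-def}, $P\cap V^+$ is the projection of $E$ onto $V^+$ along $V^-$ and $\widetilde\phi$ carries it to $P'\cap V'^+$, symmetrically for $V^-$, so $P=(P\cap V^+)\oplus(P\cap V^-)$ with $\widetilde\phi(P\cap V^+)=P'\cap V'^+$, $\widetilde\phi(P\cap V^-)=P'\cap V'^-$, while $\widetilde\phi(P\cap V_i)=P'\cap V_i'$ is the remaining hypothesis; moreover any isometry extending $\widetilde\phi$ and preserving $V^+,V^-$ already restricts to $\phi$ on $E$. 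After renaming we may thus assume $E=E^+\oplus E^-$ with $E^+:=E\cap V^+$, $E^-:=E\cap V^-$, likewise $E'=E'^+\oplus E'^-$, and $\phi(E^+)=E'^+$, $\phi(E^-)=E'^-$, $\phi(E\cap V_i)=E'\cap V_i'$. If $k$ is odd I would further pass to the refinements $\overline{\mathcal V},\overline{\mathcal V'}$ of the discussion before the theorem, which insert $V^+$ and $V'^+$; the hypotheses persist because $\phi(E\cap V^+)=\phi(E^+)=E'^+$, and the conclusion for $\overline{\mathcal V}$ implies that for $\mathcal V$ since every subspace of $\mathcal V$ occurs among those of $\overline{\mathcal V}$. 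Hence we may assume $k=2m$ is even, $V_m=V^+$, $V_m'=V'^+$; write $W_i:=V_{k-i}\cap V^-$ and $W_i':=V_{k-i}'\cap V'^-$, so that $V_{k-i}=V^+\oplus W_i$ and $V_{k-i}'=V'^+\oplus W_i'$ for $0\le i\le m$.

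Now apply Theorem~\ref{thm:iso-Witt-ext} to $\phi$ and the isometric self‑dual flags $\mathcal V,\mathcal V'$: there is an isometry $\phi_1\colon V\to V'$ with $\phi_1|_E=\phi$ and $\phi_1(\mathcal V)=\mathcal V'$, so in particular $\phi_1(V^+)=\phi_1(V_m)=V_m'=V'^+$. Put $U^-:=\phi_1(V^-)$, a maximal totally isotropic complement of $V'^+$ in $V'$. Then $E'\cap U^-=\phi_1(E\cap V^-)=E'^-$, and $V_{k-i}'\cap U^-=\phi_1(W_i)\subseteq V_{k-i}'=V'^+\oplus W_i'$, so the projection $\pi\colon V'\to V'^-$ along $V'^+$ is injective on $\phi_1(W_i)$ with image a full‑dimensional subspace of $W_i'$, whence $\pi(\phi_1(W_i))=W_i'$. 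Define $\theta\colon V'\to V'$ by $\theta|_{V'^+}=\mathrm{id}$ and $\theta|_{U^-}=\pi|_{U^-}$, which makes sense as $V'=V'^+\oplus U^-$. This $\theta$ is a linear bijection, and it is an isometry because $b'(x,u)=b'(x,\pi(u))$ for $x\in V'^+$, $u\in U^-$, with $\theta(V'^+)=V'^+$ and $\theta(U^-)=V'^-$; it fixes $E'=E'^+\oplus E'^-$ pointwise since $E'^+\subseteq V'^+$ and $E'^-\subseteq U^-\cap V'^-$ (here the adaptedness of $E'$ is indispensable); and it preserves the flag, for $\theta(V_j')=V_j'$ when $j\le m$ as $V_j'\subseteq V'^+$, and when $j=k-i>m$ because $V_j'=V'^+\oplus(V_j'\cap U^-)$ maps to $V'^+\oplus\pi(\phi_1(W_i))=V'^+\oplus W_i'=V_j'$. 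Therefore $\phi_V:=\theta\circ\phi_1$ is an isometry with $\phi_V|_E=\phi$ (as $\theta$ fixes $E'=\phi(E)$ pointwise), $\phi_V(V^+)=V'^+$, $\phi_V(V^-)=\theta(U^-)=V'^-$, and $\phi_V(\mathcal V)=\mathcal V'$ — a fortiori for the original flag if the refinement was used.

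The main obstacle is the construction and verification of the straightening map $\theta$: it must be simultaneously an isometry, the identity on $E'$, and flag‑preserving. The requirement that $\theta$ be the identity on $E'$ is precisely what forces the preliminary reduction to an adapted $E$, and flag‑preservation above the middle rests on the splitting $V_{k-i}'=V'^+\oplus W_i'$, available exactly because $V^+$ occupies the central position of the self‑dual flag. The reduction‑and‑refinement bookkeeping in the second stage is routine, but one should check with care that passing to $(P,\widetilde\phi)$ and to $\overline{\mathcal V}$ is genuinely equivalent to the original problem.
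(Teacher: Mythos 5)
Your proposal is correct, and it coincides with the paper's argument through the first two stages: the paper makes the same reduction to $\phi=\widetilde\phi_{V^+,V^-}$ (so that $E=(E\cap V^+)\oplus(E\cap V^-)$), passes to the refinement $\overline{\mc V}$ so that $k$ is even and $V_{k/2}=V^+$, and then invokes Theorem~\ref{thm:iso-Witt-ext} to get an isometry $\phi_1$ extending $\phi$ with $\phi_1({\mc V})={\mc V}'$, hence $\phi_1(V^+)=V'^+$. Where you diverge is the last step, bringing $V^-$ into position. The paper restricts $\phi_1$ to $E+V^+$, checks that $(E+V^+)\cap V^-=E\cap V^-$ is carried to $(E'+V'^+)\cap V'^-$, and applies Theorem~\ref{thm:iso-Witt-ext} a second time with the length-two self-dual flag $\{\{\0\}\subset V^-\subset V\}$; this reuses the heavy machinery of Lemma~\ref{thm:iso-Witt-gen}. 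You instead post-compose with an explicit straightening isometry $\theta$ of $V'$ that is the identity on $V'^+$ and acts on $U^-=\phi_1(V^-)$ as the projection onto $V'^-$ along $V'^+$. Your verifications all go through: $\theta$ is an isometry because $V'^+$, $U^-$, $V'^-$ are totally isotropic and $b'(x,u)=b'(x,\pi(u))$ for $x\in V'^+$; it fixes $E'$ pointwise precisely because $E'$ is adapted (so the preliminary reduction is load-bearing, as you note); and it preserves $V_j'$ for $j>k/2$ via the modular-law splitting $V_j'=V'^+\oplus(V_j'\cap U^-)$ together with the dimension count $\dim\pi(\phi_1(W_i))=\dim W_i=\dim W_i'$. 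Your route is more elementary and more transparent at this final step --- it exhibits the needed correction concretely as a unipotent isometry fixing $V'^+$ pointwise --- whereas the paper's is shorter to state because it recycles Theorem~\ref{thm:iso-Witt-ext} as a black box.
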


\begin{proof}
It suffices to prove the sufficient part.
Since $\phi$ can be extended to $\widetilde \phi_{V^+,V^-}$ by Lemma \ref{V=A+B}
and the isometry $\widetilde\phi_{V^+,V^-}:P_{V^+,V^-}(E)\to P_{V'^+, V'^-}(E')$ satisfies the sufficient conditions
about $\phi:E\to E'$ in the above theorem,
without lost of generality, we assume that $\phi=\widetilde\phi_{V^+,V^-}$ so that \BEA
E &=& P_{V^+, V^-}(E)=(E\cap V^+)\oplus (E\cap V^-),
\\
E' &=& P_{V'^+, V'^-}(E')=(E'\cap V'^+)\oplus(E'\cap V'^-).
\EEA
Moreover, the sufficient conditions in the theorem still hold if we replace ${\mc V}$ by
$\overline{\mc V}$ and ${\mc V}'$ by $\overline{{\mc V}'}$.
Thus we assume  ${\mc V}=\overline{\mc V}$
and ${\mc V}'=\overline{{\mc V}'}$  so that $k$ is even.

By
Theorem \ref{thm:iso-Witt-ext},
the isometry $\phi:E\to E'$ can be extended to an isometry
$\phi_V':V\to V'$ such that $\phi_V'({\mc V})={\mc V}'$.
Then $\phi_V'(V^+)=\phi_V'(V_{\frac k2})=V_{\frac k2}'=V'^+$.
Then $\phi_1:=\phi_V'|_{E+V^+}$ is an isometric extension of $\phi$ sending $E+V^+$ to $E'+V'^+$.
Moreover, $\phi_1(V_i)=V_i'$ for $i=1,\cdots,\frac k2$ (in particular $\phi_1(V^+)=V'^+$).
Notice that
$$
\phi_1((E+V^+)\cap V^-)=\phi(E\cap V^-)=E'\cap V'^-=(E'+V'^+)\cap V'^-.
$$
Applying Theorem \ref{thm:iso-Witt-ext}  on $\phi_1:E+V^+\to E'+V'^+$
and the self-dual flags $\{\{\0\}\subset V^-\subset V\}$
and $\{\{\0'\}\subset V'^-\subset V'\}$, the isometry
$\phi_1$ can be extended to an isometry $\phi_V:V\to V'$ such that
$\phi_V(V^-)=V'^-$. Finally, $\phi_V(V_i)=\phi_1(V_i)=V_i'$ for $i=1,\cdots,\frac k2$ and
$$\phi_V(V_i)=\phi_V(V_{k-i}^\perp)=\phi_V(V_{k-i})^\perp
=V_{k-i}'^\perp=V_i'$$
for $i=\frac k2+1,\cdots, k$. This shows that $\phi_V({\mc V})={\mc V}'$.
We are done.
\end{proof}

\begin{lemma}\label{thm:A+B+C}
Suppose $V=A\oplus B\oplus C$ and $V'=A'\oplus B'\oplus C'$
  satisfy that $\dim A=\dim A'$, $\dim B=\dim B'$, and $\dim C=\dim C'$.
  Then a linear bijection $\phi:E\to E'$ can be extended to a linear bijection
  $\phi_V:V\to V'$ such that $\phi_V(A)=A'$, $\phi_V(B)=B'$, and
  $\phi_V(C)=C'$, if and only if
\bse\label{A+B+C}
\bea
\phi(E\cap (B+C)) &=& E'\cap (B'+C'),
\label{B+C}\\
\phi(E\cap (C+A)) &=& E'\cap (C'+A'),
\label{C+A}\\
\phi(E\cap (A+B)) &=& E'\cap (A'+B').
\label{A+B}
\eea
\ese
\end{lemma}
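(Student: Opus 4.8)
The plan is to dispose of the necessity directly and, for the sufficiency, to first manufacture a distinguished bijective extension of $\phi$ that already respects the three coordinate subspaces and then enlarge it one block at a time. Necessity is immediate: if a linear bijection $\phi_V\colon V\to V'$ extends $\phi$ and carries $A,B,C$ onto $A',B',C'$ respectively, then it carries $B+C$ onto $B'+C'$ and $E$ onto $E'$, so, being bijective, it carries $E\cap(B+C)$ onto $E'\cap(B'+C')$; restricting to $E$ yields \eqref{B+C}, and \eqref{C+A}, \eqref{A+B} follow the same way.

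For the sufficiency, for $\v\in V$ write $\v=\v_A+\v_B+\v_C$ with $\v_A\in A$, $\v_B\in B$, $\v_C\in C$, and similarly on the primed side. I would first observe that $(E+B+C)\cap A=\{\v_A\mid \v\in E\}$, and likewise for the $B$- and $C$-components, so that
\[
P(E):=\bigl[(E+B+C)\cap A\bigr]\oplus\bigl[(E+C+A)\cap B\bigr]\oplus\bigl[(E+A+B)\cap C\bigr]
\]
contains $E$; define $P(E')$ analogously, so $P(E')\supset E'$. Then I would define $\widetilde\phi\colon P(E)\to P(E')$ by setting $\widetilde\phi(\v_A):=\phi(\v)_{A'}$, $\widetilde\phi(\v_B):=\phi(\v)_{B'}$, $\widetilde\phi(\v_C):=\phi(\v)_{C'}$ for $\v\in E$ and extending linearly across the direct sum; this is the three-summand analogue of \eqref{widetilde-phi-def}.

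The heart of the argument is to check that $\widetilde\phi$ is a well-defined linear bijection, and here each hypothesis governs exactly one block. For the $A$-block, well-definedness says $\u\in E$ with $\u_A=\0$ forces $\phi(\u)_{A'}=\0$; this holds since then $\u\in E\cap(B+C)$, hence $\phi(\u)\in E'\cap(B'+C')$ by the inclusion ``$\subseteq$'' of \eqref{B+C}. Injectivity of the $A$-block is the converse: if $\v\in E$ and $\phi(\v)_{A'}=\0$ then $\phi(\v)\in E'\cap(B'+C')$, which by the inclusion ``$\supseteq$'' of \eqref{B+C} lies in $\phi(E\cap(B+C))$, so injectivity of $\phi$ gives $\v\in B+C$ and $\v_A=\0$. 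Surjectivity of the $A$-block onto $(E'+B'+C')\cap A'$ is automatic since $\phi$ is onto $E'$. The $B$- and $C$-blocks are identical, with \eqref{C+A} and \eqref{A+B} in place of \eqref{B+C}. Since $A,B,C$ are independent (and likewise $A',B',C'$), the three blocks assemble into a linear bijection $\widetilde\phi\colon P(E)\to P(E')$, and evaluating at $\v=\v_A+\v_B+\v_C\in E$ shows $\widetilde\phi|_E=\phi$.

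Finally, $\widetilde\phi$ carries $(E+B+C)\cap A$ isomorphically onto the subspace $(E'+B'+C')\cap A'$ of $A'$, so, since $\dim A=\dim A'$, it extends to a linear bijection $\phi_A\colon A\to A'$; choose $\phi_B\colon B\to B'$ and $\phi_C\colon C\to C'$ the same way. Then $\phi_V:=\phi_A\oplus\phi_B\oplus\phi_C\colon V\to V'$ is a linear bijection with $\phi_V(A)=A'$, $\phi_V(B)=B'$, $\phi_V(C)=C'$ that agrees with $\widetilde\phi$, hence with $\phi$, on $E$. I do not anticipate a genuine obstacle here; the one point needing care is the bookkeeping that matches each of \eqref{B+C}--\eqref{A+B} with its coordinate block, together with checking that $(E+B+C)\cap A$ really is $\{\v_A\mid\v\in E\}$. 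Specializing to $C=\{\0\}$ recovers Lemma \ref{V=A+B} and, along the way, justifies the explicit formula \eqref{widetilde-phi-def}.
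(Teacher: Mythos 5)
Your proposal is correct and follows essentially the same route as the paper: define the block maps $\v_A\mapsto\phi(\v)_{A'}$ (the paper's $\phi_A'$, $\phi_B'$, $\phi_C'$), use each of \eqref{B+C}--\eqref{A+B} to verify well-definedness and bijectivity of the corresponding block, extend each block to a bijection of the full summands via the dimension hypotheses, and take the direct sum. Your treatment is slightly more explicit about injectivity/surjectivity of each block and about the identity $(E+B+C)\cap A=\{\v_A\mid\v\in E\}$, but there is no substantive difference.
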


\begin{proof}
  It suffices to prove the sufficient part.
Every $\v\in V$ can be uniquely expressed as $\v=\v_A+\v_B+\v_C$
for $\v_A\in A$, $\v_B\in B$, $\v_C\in C$. Precisely,
$\{\v_A\}=(\{\v\}+B+C)\cap A$ and likewise for $\v_B$ and $\v_C$.
Similarly, every $\v'\in V'$ can be uniquely expressed as
$\v'=\v_{A'}'+\v_{B'}'+\v_{C'}'$
for $\v_{A'}'\in A'$, $\v_{B'}'\in B'$, $\v_{C'}'\in C'$.
From \eqref{B+C}, we claim that the map $\phi_{A}'(\v_A):= \phi(\v)_{A'}$ for
$\v\in E$ is a well-defined linear bijection.
To see the well-definedness, suppose $\u, \v\in E$ such that
$\u_A=\v_A$, then $\u-\v\in E\cap (B+C)$ and thus $\phi(\u)-\phi(\v)\in E'\cap (B'+C')$ and thus $\phi(\u)_{A'}=\phi(\v)_{A'}$.
Analogous analysis shows that  $\phi_{A}'$ is a bijection.
Then the linearity of $\phi_{A}'$
comes from the linearity of $\phi$.
Likewise, both $\phi_B'(\v_B):=\phi(\v)_{B'}$ for $\v\in E$
and $\phi_C'(\v_C):=\phi(\v)_{C'}$ for $\v\in E$ are  well-defined linear bijections. Extend $\phi_A'$ (resp. $\phi_B'$, $\phi_C'$) to a linear bijection $\phi_A:A\to A'$ (resp. $\phi_B:B\to B'$, $\phi_C:C\to C'$).
Then $\phi_V:=\phi_A\oplus\phi_B\oplus\phi_C$ is a linear bijection from $V$ to $V'$, such that
$\phi_V|_E=\phi$, $\phi_V(A)=A'$, $\phi_V(B)=B'$, and
  $\phi_V(C)=C'$.
\end{proof}

In the above proof, denote the sum of the projections of $E$ onto $A$, $B$, and $C$ components with respect to $V=A\oplus B\oplus C$ by
\beq
P_{A,B,C}(E):= [(E+B+C)\cap A]\oplus [(E+C+A)\cap B]\oplus [(E+A+B)\cap C].
\eeq
When $\phi$ satisfies the conditions in \eqref{A+B+C}, the map $\widetilde\phi_{A,B,C}:=\phi_A'\oplus\phi_B'\oplus\phi_C'$ is
a linear bijection between $P_{A,B,C}(E)$ and $P_{A',B',C'}(E')$.
Alternatively, $\widetilde\phi_{A,B,C}$ may be  defined by
\beq
\widetilde\phi_{A,B,C}(\v_A):=\phi(\v)_{A'},\quad
\widetilde\phi_{A,B,C}(\v_B):=\phi(\v)_{B'},\quad
\widetilde\phi_{A,B,C}(\v_C):=\phi(\v)_{C'},\quad
\text{for}\quad \v\in E.
\eeq
Every extension of $\phi$ to a linear bijection between $V$ and $V'$
that sends $A$ to $A'$, $B$ to $B'$, and $C$ to $C'$ respectively is also a  extension of $\widetilde\phi_{A,B,C}$.

A Witt's decomposition of a generic nonsingular metric space $V$ is
\beq\label{Witt-decomposition}
V=V^+\oplus \widehat V\oplus V^-
\eeq
where $V^+$ and $V^-$ are maximal totally isotropic spaces
of the same dimension  and $\widehat V$ is an anisotropic space orthogonal
to both $V^+$ and $V^-$. Two nonsingular metric spaces are isometric
if and only if their corresponding components in Witt's decompositions
are isometric.

\begin{thm}\label{thm:Witt-decomposition}
  Let $V=V^+\oplus \widehat V\oplus V^-$ and $V'=V'^+\oplus \widehat {V'}\oplus V'^-$ be Witt's decompositions of isometric  spaces $V$ and $V'$ respectively.
Let ${\mc V}=\{V_i\}_{i=0,\cdots,k}$ and ${\mc V}'=\{V_i'\}_{i=0,\cdots,k}$
be isometric self-dual flags of $V$ and $V'$ respectively
such that $V_{\lfloor\frac k2\rfloor}\subset V^+$ and $V_{\lfloor\frac{k}2\rfloor}'\subset V'^+$.
Then an isometry $\phi:E\to E'$ between $E\subset V$ and $E'\subset V'$
can be extended to an isometry $\phi_V:V\to V'$ such that
\beq\label{Witt-decomposition-condition}
\phi_V(\widehat V)=\widehat{V'},\qquad
\phi_V(V^+)=V'^+,\qquad
\phi_V(V^-)=V'^-,\qquad
\phi_V({\mc V})={\mc V}',
\eeq
if and only if
\bse\label{equiv-Witt-decomp}
\bea
\phi(E\cap (V^++V^-)) &=& E'\cap (V'^++V'^-),
\label{equiv-Witt-decomp-a}\\
\phi(E\cap (V^-+\widehat V)) &=& E'\cap (V'^-+\widehat {V'}),
\label{equiv-Witt-decomp-b}\\
\phi(E\cap (\widehat V+V^+)) &=& E'\cap (\widehat{V'}+V'^+),
\label{equiv-Witt-decomp-c}
\eea
and the induced map $\widetilde\phi_{V^+,\widehat V,V^-}: P_{V^+,\widehat V,V^-}(E) \to P_{V'^+,\widehat {V'},V'^-}(E')$ is an isometry
that satisfies
\beq
\widetilde\phi_{V^+,\widehat{V},V^-}(P_{V^+,\widehat V,V^-}(E)\cap V_i)
=P_{V'^+,\widehat {V'},V'^-}(E')\cap V_i' \qquad
\text{for}\qquad i=1,\cdots, k.
\label{equiv-Witt-decomp-d}
\eeq
\ese
\end{thm}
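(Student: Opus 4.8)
The plan is to reduce Theorem \ref{thm:Witt-decomposition} to Theorem \ref{thm:hyperspace} by ``factoring out'' the anisotropic part $\widehat V$, in exactly the same spirit as the reduction of Theorem \ref{thm:iso-Witt-ext} to its $k=3$ case. As always only the sufficient part needs proof. First I would use Lemma \ref{thm:A+B+C} (applied to the decompositions $V=V^+\oplus\widehat V\oplus V^-$ and $V'=V'^+\oplus\widehat{V'}\oplus V'^-$), whose hypotheses are precisely \eqref{equiv-Witt-decomp-a}--\eqref{equiv-Witt-decomp-c}, to extend $\phi$ to $\widetilde\phi_{V^+,\widehat V,V^-}$; since the latter is assumed to be an isometry satisfying \eqref{equiv-Witt-decomp-d}, and since any isometry with the desired properties \eqref{Witt-decomposition-condition} is automatically an extension of $\widetilde\phi_{V^+,\widehat V,V^-}$, there is no loss in replacing $\phi:E\to E'$ by $\widetilde\phi_{V^+,\widehat V,V^-}:P_{V^+,\widehat V,V^-}(E)\to P_{V'^+,\widehat{V'},V'^-}(E')$. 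Thus I may assume from the outset that
\[
E=(E\cap V^+)\oplus(E\cap\widehat V)\oplus(E\cap V^-),\qquad
E'=(E'\cap V'^+)\oplus(E'\cap\widehat{V'})\oplus(E'\cap V'^-),
\]
and that $\phi$ respects the $V_i$ in the sense of \eqref{equiv-Witt-decomp-d}.

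Next I would peel off the anisotropic summand. Set $E_{\widehat{}}:=E\cap\widehat V$ and $E_{\widehat{}}':=E'\cap\widehat V'$; under the normalization above $\phi$ restricts to an isometry $E_{\widehat{}}\to E_{\widehat{}}'$ between subspaces of the anisotropic spaces $\widehat V\approx\widehat{V'}$. By Witt's theorem applied inside $\widehat V$, extend this to an isometry $\phi_{\widehat V}:\widehat V\to\widehat{V'}$. Now I would combine $\phi_{\widehat V}$ with the part of $\phi$ living in the hyperbolic complement: let $H:=V^+\oplus V^-$ and $H':=V'^+\oplus V'^-$, which are isometric hyperbolic spaces with $V=H\odot\widehat V$, $V'=H'\odot\widehat{V'}$. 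The subspace $E\cap H=(E\cap V^+)\oplus(E\cap V^-)$ and similarly $E'\cap H'$; $\phi$ restricts to an isometry $E\cap H\to E'\cap H'$. I would apply Theorem \ref{thm:hyperspace} to the hyperbolic spaces $H,H'$, their decompositions $H=V^+\oplus V^-$, $H'=V'^+\oplus V'^-$, the self-dual flags $\{V_i\cap H\}$ of $H$ and $\{V_i'\cap H'\}$ of $H'$ (these are self-dual in $H$ because $\perp$ within $H$ agrees with $\perp$ within $V$ on subspaces of $H$, using $V=H\odot\widehat V$ and that $V_i\supset\widehat V$ or $V_i\subset V^+\subset H$ according to the half, together with the $V_{\lfloor k/2\rfloor}\subset V^+$ hypothesis), and the subspace isometry $\phi|_{E\cap H}$. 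The hypotheses of Theorem \ref{thm:hyperspace} --- namely $\phi(E\cap H\cap V^+)=E'\cap H'\cap V'^+$, the analogous statement for $V^-$, the induced $\widetilde\phi_{V^+,V^-}$ being an isometry, and its compatibility with $\{V_i\cap H\}$ --- all follow from our normalization and \eqref{equiv-Witt-decomp-d}, because $P_{V^+,V^-}(E\cap H)=E\cap H$ already and $(E\cap H)\cap(V_i\cap H)=(P_{V^+,\widehat V,V^-}(E)\cap V_i)\cap H$. This yields an isometry $\phi_H:H\to H'$ extending $\phi|_{E\cap H}$ with $\phi_H(V^+)=V'^+$, $\phi_H(V^-)=V'^-$, $\phi_H(V_i\cap H)=V_i'\cap H'$ for all $i$.

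Finally I would set $\phi_V:=\phi_H\odot\phi_{\widehat V}:H\odot\widehat V\to H'\odot\widehat{V'}$, i.e. $\phi_V:V\to V'$. This is an isometry because $H\perp\widehat V$ and $H'\perp\widehat{V'}$. It extends $\phi$ since $\phi_H$ agrees with $\phi$ on $E\cap H$ and $\phi_{\widehat V}$ agrees with $\phi$ on $E\cap\widehat V$, and $E=(E\cap H)\oplus(E\cap\widehat V)$ by the normalization. Clearly $\phi_V(\widehat V)=\widehat{V'}$, $\phi_V(V^+)=\phi_H(V^+)=V'^+$, $\phi_V(V^-)=V'^-$. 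It remains to check $\phi_V(V_i)=V_i'$: for $i\le\lfloor k/2\rfloor$ we have $V_i\subset V^+\subset H$, so $V_i=V_i\cap H$ and $\phi_V(V_i)=\phi_H(V_i\cap H)=V_i'\cap H'=V_i'$; for $i>\lfloor k/2\rfloor$, $V_i=V_{k-i}^\perp$, so $\phi_V(V_i)=\phi_V(V_{k-i})^\perp=(V_{k-i}')^\perp=V_i'$. This gives \eqref{Witt-decomposition-condition}.

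The main obstacle I expect is purely bookkeeping: verifying that the restricted flag $\{V_i\cap H\}$ really is self-dual inside $H$ and that the compatibility condition \eqref{equiv-Witt-decomp-d} descends correctly to $H$ --- i.e. keeping straight the two orthogonal complements (in $V$ versus in $H$) and checking $(P_{V^+,\widehat V,V^-}(E)\cap V_i)\cap H = P_{V^+,V^-}(E\cap H)\cap(V_i\cap H)$. Once the normalization $E=(E\cap V^+)\oplus(E\cap\widehat V)\oplus(E\cap V^-)$ is in force these become short computations with Lemma \ref{space}, but care is needed because $\widehat V$ is not totally isotropic, so the earlier self-dual-flag shortcuts do not apply to it directly; the point is that $\widehat V$ is entirely ``in the middle'' of $\mc V$ (each $V_i$ either contains $\widehat V$ or is contained in $V^+$), which is exactly what makes the splitting $\phi_V=\phi_H\odot\phi_{\widehat V}$ compatible with the flag.
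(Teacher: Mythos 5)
Your proposal is correct and follows essentially the same route as the paper: normalize via Lemma \ref{thm:A+B+C} so that $\phi=\widetilde\phi_{V^+,\widehat V,V^-}$ and $E$ splits along the Witt decomposition, extend $\phi|_{E\cap\widehat V}$ by Witt's theorem on the anisotropic part, apply Theorem \ref{thm:hyperspace} to the hyperbolic part $V^+\oplus V^-$ with the restricted self-dual flag (your $\{V_i\cap H\}$ is exactly the paper's $\widetilde{\mc V}$), and glue with $\odot$. The bookkeeping you flag at the end is indeed all that remains, and your sketch of it is sound.
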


\begin{cor}
An isometry $\phi:E\to E'$ can be extended to an isometry
$\phi_V:V\to V'$ that sends a given Witt's decomposition
$V=V^+\oplus\widehat V\oplus V^-$ to another given Witt's decomposition
$V'=V'^+\oplus\widehat{V'}\oplus V'^-$ if and only if
$\phi$ satisfies \eqref{equiv-Witt-decomp-a}, \eqref{equiv-Witt-decomp-b},
\eqref{equiv-Witt-decomp-c}, and the induced map
$\widetilde\phi_{V^+,\widehat V,V^-}$ is an isometry.
\end{cor}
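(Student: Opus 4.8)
The plan is to deduce the corollary from Theorem~\ref{thm:Witt-decomposition} by specializing it to the trivial self-dual flags. Concretely, I would take $k=1$ and set ${\mc V}:=\{V_0=\{\0\}\subset V_1=V\}$ and ${\mc V}':=\{V_0'=\{\0'\}\subset V_1'=V'\}$. First I would check that these are admissible inputs for that theorem. They are self-dual flags: nonsingularity of $V$ (resp.\ $V'$) gives $V_1^\perp=V^\perp=\{\0\}=V_0$ and $V_0^\perp=V=V_1$, so the orthogonal chain coincides with the original; and they are isometric because $V\approx V'$. The flag hypothesis $V_{\lfloor 1/2\rfloor}=V_0=\{\0\}\subset V^+$ holds trivially, and likewise $V_0'=\{\0'\}\subset V'^+$.

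Next I would match the two sets of hypotheses and conclusions. With this choice of flags, condition \eqref{equiv-Witt-decomp-d} needs to be verified only for $i=1$, where it asserts $\widetilde\phi_{V^+,\widehat V,V^-}\bigl(P_{V^+,\widehat V,V^-}(E)\cap V\bigr)=P_{V'^+,\widehat{V'},V'^-}(E')\cap V'$, i.e.\ $\widetilde\phi_{V^+,\widehat V,V^-}\bigl(P_{V^+,\widehat V,V^-}(E)\bigr)=P_{V'^+,\widehat{V'},V'^-}(E')$; this is automatic, since by Lemma~\ref{thm:A+B+C} the map $\widetilde\phi_{V^+,\widehat V,V^-}$ is a bijection of $P_{V^+,\widehat V,V^-}(E)$ onto $P_{V'^+,\widehat{V'},V'^-}(E')$. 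Thus for the trivial flags the hypotheses of Theorem~\ref{thm:Witt-decomposition} collapse to exactly \eqref{equiv-Witt-decomp-a}, \eqref{equiv-Witt-decomp-b}, \eqref{equiv-Witt-decomp-c}, together with the requirement that $\widetilde\phi_{V^+,\widehat V,V^-}$ be an isometry --- precisely the conditions appearing in the corollary. Similarly, in the conclusion \eqref{Witt-decomposition-condition} the clause $\phi_V({\mc V})={\mc V}'$ is vacuous for the trivial flags ($\phi_V(\{\0\})=\{\0'\}$ and $\phi_V(V)=V'$ for any bijection $\phi_V$), so the surviving content of the conclusion is exactly that $\phi_V$ sends $V=V^+\oplus\widehat V\oplus V^-$ to $V'=V'^+\oplus\widehat{V'}\oplus V'^-$.

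Finally I would record both directions. Feeding the trivial flags into the ``if'' part of Theorem~\ref{thm:Witt-decomposition} yields the ``if'' part of the corollary, and feeding them into the ``only if'' part of that theorem yields the ``only if'' part of the corollary. Since the whole argument is a pure specialization of a result that may be assumed already established, I do not expect any genuine obstacle; the only step warranting a sentence of explanation is the verification that \eqref{equiv-Witt-decomp-d} degenerates to a triviality when $k=1$, which is what makes the trivial flag a legitimate choice (one cannot in general use a shorter nontrivial flag such as $\{\{\0\}\subset V^+\subset V\}$, since $(V^+)^\perp=V^+\oplus\widehat V\ne V^+$ unless $\widehat V=\{\0\}$).
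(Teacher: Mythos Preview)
Your proposal is correct. The paper states this corollary without proof, treating it as an immediate specialization of Theorem~\ref{thm:Witt-decomposition}; your choice of the trivial self-dual flags $\{\{\0\}\subset V\}$ and $\{\{\0'\}\subset V'\}$ (with $k=1$) is exactly the intended specialization, and your verification that \eqref{equiv-Witt-decomp-d} becomes vacuous in this case is the only point that needed checking.
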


\begin{proof}[Proof of Theorem \ref{thm:Witt-decomposition}]
It suffices to prove the sufficient part.
If $\dim\widehat V=0$, then the statement is just Theorem \ref{thm:hyperspace}.
Now suppose $\dim\widehat V>0$ so that $k$ is odd.
By the same reasons as stated in the proof of Theorem \ref{thm:hyperspace},
without lost of generality, we assume that
$\phi=\widetilde\phi_{V^+,\widehat V,V^-}$ so that
\BEA
E &=& P_{V^+,\widehat V,V^-}(E)=(E\cap V^+)\oplus (E\cap\widehat V)\oplus (E\cap V^-),
\\
E' &=& P_{V'^+,\widehat {V'},V'^-}(E')=(E'\cap V'^+)\oplus (E'\cap\widehat {V'})\oplus (E'\cap V'^-).
\EEA
Then for $i=\frac{k+1}2,\cdots, k$, by Lemma \ref{space}(5)
$$
V_i=V_i\cap \left[\widehat V\odot (V^+\oplus V^-)\right]=\widehat V\odot \left[V_i\cap(V^+\oplus V^-)\right].
$$
Likewise for $V_i'$ ($i=\frac{k+1}2,\cdots, k$).
Denote $\widetilde {V_i}:=V_i\cap (V^+\oplus V^-)$ and
$\widetilde {V_i'}:=V_i'\cap (V'^+\oplus V'^-)$ for $i=\frac{k+1}2,\cdots, k$.
Then $\widetilde{\mc V}:=\{V_0\subset \cdots\subset V_{\frac{k-1}2}
\subset \widetilde {V_{\frac{k+1}2}}\subset\cdots\subset
\widetilde {V_k}\}$ is a self-dual flag of $V^+\oplus V^-$,
and
$\widetilde{\mc V'}:=\{V_0'\subset \cdots\subset V_{\frac{k-1}2}'
\subset \widetilde {V_{\frac{k+1}2}'}\subset\cdots\subset
\widetilde {V_k'}\}$ is a self-dual flag of $V'^+\oplus V'^-$.
Notice that
$$
\phi=\phi|_{E\cap\widehat V} \odot \phi|_{(E\cap V^+)\oplus (E\cap V^-)}.
$$
By Witt's theorem, the isometry $\phi|_{E\cap\widehat V}: E\cap \widehat V\to
E'\cap\widehat{V'}$ can be extended to an isometry
$\phi_{\widehat V}:\widehat V\to \widehat {V'}$.
By applying Theorem \ref{thm:hyperspace} onto
$\phi|_{(E\cap V^+)\oplus (E\cap V^-)}$ and
the self-dual flags $\widetilde {\mc V}$ and $\widetilde {\mc V'}$,
the isometry
$\phi|_{(E\cap V^+)\oplus (E\cap V^-)}: (E\cap V^+)\oplus (E\cap V^-)
\to (E'\cap V'^+)\oplus (E'\cap V'^-)$ can be extended to
an isometry $\phi_{V^+\oplus V^-}: V^+\oplus V^-\to V'^+\oplus V'^-$
such that $\phi_{V^+\oplus V^-}(V^+)=V'^+$,
$\phi_{V^+\oplus V^-}(V^-)=V'^-$, and $\phi_{V^+\oplus V^-}(\widetilde {\mc V})=\widetilde {\mc V'}$.
Then $\phi_V:=\phi_{\widehat V}\odot\phi_{V^+\oplus V^-}$ is a desired isometric linear extension of $\phi$ that satisfies \eqref{Witt-decomposition-condition}.
\end{proof}

\section{Simultaneous isometry of subspace pairs}

Suppose $(V, b)\approx (V', b')$ are nonsingular.
Let $E, A\subset V$ and $E', A'\subset V'$ satisfy that
$\phi:E\to E'$ is an isometry and $A\approx A'$.
We determine when  $\phi$ can be extended to an isometry
$\phi_{V}: V\to V'$ that
sends $A$ to $A'$.

We begin with the self-dual flags
\beq\label{A-self-dual}
\left\{\{\0\}\subset A^\perp\cap A  \subset A+A^\perp\subset V\right\}
\quad\text{and}\quad
\left\{\{\0'\}\subset A'^\perp\cap A' \subset A'+A'^\perp\subset V'\right\}.
\eeq
By Lemma \ref{thm:iso-Witt-gen},
the isometry $\phi:E\to E'$ can be extended to an isometry
$\phi_{V}': V\to V'$ such that
$A^\perp\cap A \overset{\phi_{V}'}{\approx}A'^\perp\cap A'$ and
$A+ A^\perp \overset{\phi_{V}'}{\approx} A'+ A'^\perp$
if and only if
\begin{quote}
(C1) $E\cap A^\perp\cap A \overset{\phi}{\approx}E'\cap A'^\perp\cap A'$,

(C2) $E\cap (A+A^\perp)\overset{\phi}{\approx}E'\cap (A'+A'^\perp)$.
\end{quote}
Now if $\phi:E\to E'$ can be extended to an isometry $\phi_{V}$ that sends $A$ to $A'$,
then (C1) and (C2) above and (C3) and (C4) below hold:
\begin{quote}
(C3) $E\cap A\overset{\phi}{\approx} E'\cap A'$,

(C4) $E\cap A^\perp\overset{\phi}{\approx} E'\cap A'^\perp$.
\end{quote}
Obviously, (C3) and (C4) imply (C1).

Suppose the isometry $\phi:E\to E'$ satisfies (C2)$\sim$(C4) (and so (C1)). Then $\phi$ induces two
linear bijections $\phi_A$ and $\phi_{A^\perp}$
of metric spaces as follow:
\ben
\item The first map is
\beq
\phi_A: \frac{(E+A^\perp)\cap A}{A^\perp\cap A}
\to \frac{(E'+A'^\perp)\cap A'}{A'^\perp\cap A'}.
\label{phi_A}
\eeq
The motivation of introducing $\phi_A$ originates from the special case $A^\perp\cap A=\{\0\}$,
where $V=A\odot A^\perp$ is nonsingular, and
$\frac{(E+A^\perp)\cap A}{A^\perp\cap A}\approx (E+A^\perp)\cap A$
is the projection of
$E$ onto $A$-component with respect to the decomposition $V=A\odot A^\perp$.
If $\phi$ can be extended to an isometry $\phi_{V}$ that sends $A$ to $A'$,
then the projection of $E$ onto \mbox{$A$-component} should be sent isometrically
to the projection of $E'$ onto \mbox{$A'$-component} in $V'=A'\odot A'^\perp$.

Let us define $\phi_A$ in \eqref{phi_A}.
Given $\v+(A^\perp\cap A)\in \frac{(E+A^\perp)\cap A}{A^\perp\cap A}$,
there exist $\e\in E$ and $\a_{\perp}\in A^\perp$
such that $\v=\e-\a_{\perp}\in A$, that is, $\e=\v+\a_{\perp}\in E\cap(A+A^\perp)$. Then
 $\phi(\e)\in E'\cap (A'+A'^\perp)$ by (C2).
So there exist $\v'\in A'$ and $\a'_{\perp}\in A'^\perp$
such that $\phi(\e)=\v'+\a'_{\perp}$, that is,
$\v'=\phi(\e)-\a'_{\perp}\in (E'+A'^\perp)\cap A'$. Define
\beq\phi_A\left(\v+(A^\perp\cap A)\right):=\v'+(A'^\perp\cap A').\eeq

The space $\frac{A+A^\perp}{A^\perp\cap A}$ (and so $\frac{(E+A^\perp)\cap A}{A^\perp\cap A}$)
carries
a metric  $\bar b$ induced from  $b$:
\beq\label{bar-b}
\bar b\left(\v_1+(A^\perp\cap A), \v_2+(A^\perp\cap A)\right)
:=b(\v_1, \v_2)
\qquad\text{for}\quad\v_1, \v_2\in A+A^\perp.
\eeq
Analogously, $\frac{A'+A'^\perp}{A'^\perp \cap A'}$
(and so $\frac{(E'+A'^\perp)\cap A'}{A'^\perp\cap A'}$)
carries a metric
$\bar b'$ induced from $b'$.

\begin{lem}\label{phi_A_well-defined}
 $\phi_A$ is a well-defined linear bijection of metric spaces.
\end{lem}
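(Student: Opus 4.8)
The plan is to verify, in order, that $\phi_A$ is well-defined, that it is linear, and that it is bijective; exactly conditions (C2) and (C4) are used (condition (C3), though available, plays no role here). Throughout write $R:=A^\perp\cap A$ and $R':=A'^\perp\cap A'$ for the two radicals.

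The one step that needs care is well-definedness, because the value $\phi_A(\v+R)$ is produced through three successive choices: a representative $\v\in(E+A^\perp)\cap A$ of the class; a decomposition $\v=\e-\a_\perp$ with $\e\in E$ and $\a_\perp\in A^\perp$ --- note that then $\e=\v+\a_\perp\in E\cap(A+A^\perp)$, so (C2) gives $\phi(\e)\in E'\cap(A'+A'^\perp)$ and the next choice makes sense; and a decomposition $\phi(\e)=\v'+\a'_\perp$ with $\v'\in A'$, $\a'_\perp\in A'^\perp$. Since every choice enters linearly, comparing two admissible sets of data for (possibly distinct) representatives of the same class and subtracting reduces the entire matter to the single claim: \emph{if $\v\in R$ and $\v=\e-\a_\perp$ with $\e\in E,\ \a_\perp\in A^\perp$, and $\phi(\e)=\v'+\a'_\perp$ with $\v'\in A',\ \a'_\perp\in A'^\perp$, then $\v'\in R'$.} This is immediate: $\e=\v+\a_\perp\in R+A^\perp=A^\perp$, so $\e\in E\cap A^\perp$, whence (C4) gives $\phi(\e)\in E'\cap A'^\perp\subseteq A'^\perp$; then $\v'=\phi(\e)-\a'_\perp\in A'^\perp$, and together with $\v'\in A'$ this yields $\v'\in R'$. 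The same line also shows that $\phi_A(\v+R)$ indeed lies in $\frac{(E'+A'^\perp)\cap A'}{R'}$, since $\v'=\phi(\e)-\a'_\perp\in(E'+A'^\perp)\cap A'$.

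Linearity then follows formally: for classes $\v_1+R$, $\v_2+R$ with chosen data $\v_i=\e_i-\a_{i,\perp}$ and $\phi(\e_i)=\v_i'+\a'_{i,\perp}$, the vector $c_1\e_1+c_2\e_2\in E$ together with its $\phi$-image realises an admissible choice computing $\phi_A\big(c_1(\v_1+R)+c_2(\v_2+R)\big)$, so by well-definedness this equals $(c_1\v_1'+c_2\v_2')+R'=c_1\phi_A(\v_1+R)+c_2\phi_A(\v_2+R)$. For bijectivity I would argue directly. Injectivity: if $\phi_A(\v+R)=R'$ then, in the notation above, $\v'\in R'$, so $\phi(\e)=\v'+\a'_\perp\in A'^\perp$, hence $\phi(\e)\in E'\cap A'^\perp$; applying (C4) (recall $\phi$ is a bijection) gives $\e\in E\cap A^\perp\subseteq A^\perp$, so $\v=\e-\a_\perp\in A^\perp\cap A=R$, i.e. $\v+R=R$. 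Surjectivity: given $\v'+R'$ with $\v'\in(E'+A'^\perp)\cap A'$, write $\v'=\e'-\a'_\perp$ with $\e'\in E',\ \a'_\perp\in A'^\perp$, so $\e'=\v'+\a'_\perp\in E'\cap(A'+A'^\perp)$; by (C2) there is $\e\in E\cap(A+A^\perp)$ with $\phi(\e)=\e'$, and decomposing $\e=\v+\a_\perp$ ($\v\in A,\ \a_\perp\in A^\perp$) gives a class $\v+R$ in the domain with $\phi_A(\v+R)=\v'+R'$ by the very construction.

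The only genuine obstacle is the bookkeeping in the well-definedness step --- tracking which data are chosen and seeing that (C4) is precisely what absorbs the resulting ambiguity; once that is in place, linearity and bijectivity are purely formal. It is worth emphasising that $\phi_A$ need \emph{not} be an isometry for the induced forms $\bar b$, $\bar b'$ under (C1)--(C4) alone: the metric on these quotients is recorded here only for later use, and the requirement that $\phi_A$ (and its companion $\phi_{A^\perp}$) be isometric will enter as an additional hypothesis in the main theorem.
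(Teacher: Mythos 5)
Your proposal is correct and follows essentially the same route as the paper: well-definedness is reduced by subtraction to the case of a representative in $A^\perp\cap A$ and absorbed by (C4), linearity is formal, and your explicit injectivity/surjectivity argument is just an unpacking of the paper's remark that $\phi_A^{-1}$ is defined from $\phi^{-1}$ in the same fashion. Your closing observations (that (C3) is not needed here and that $\phi_A$ need not be an isometry under (C2)--(C4) alone) are also consistent with how the paper uses the lemma.
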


\begin{proof}
To establish the well-definedness of $\phi_{A}$, suppose
$\v_1+(A^\perp\cap A)  =\v_2+(A^\perp\cap A)$
in $\frac{(E+A^\perp)\cap A}{A^\perp\cap A}$,
where $\v_i=\e_i-{\a_{i\perp}}\in A$ for $\e_i\in E$ and $\a_{i\perp}\in A^\perp$,
 $i=1,2$. Then $\v_1-\v_2\in  A^\perp\cap A$ and so
$$(\e_1-\e_2)-(\a_{1\perp}-\a_{2\perp})
\in  A^\perp\cap A  .$$
Thus $\e_1-\e_2\in E\cap A^\perp$ by
$\a_{1\perp}-\a_{2\perp}\in A^\perp$, and
$\phi(\e_1)-\phi(\e_2)\in E'\cap A'^\perp$  by (C4).
Notice that $\phi(\e_i)\in E'\cap (A'+A'^\perp)$ by the preceding
discussion
in defining $\phi_A$.
Suppose $\phi(\e_i)=\v_i'+\a_{i\perp}'$ for $\v_i'\in A'$ and
$\a_{i\perp}'\in A'^\perp$, $i=1,2$. Then
\beq
\v_1'-\v_2'
=\left(\phi(\e_1)-\phi(\e_2)\right)-\left(\a_{1\perp}'-\a_{2\perp}'\right).
\eeq
Both $\phi(\e_1)-\phi(\e_2)$ and $\a_{1\perp}'-\a_{2\perp}'$ are in $A'^\perp$.
So $\v_1'-\v_2'\in A'^\perp\cap A'$;
that is,
$$\v_1'+(A'^\perp\cap A')=\v_2'+(A'^\perp\cap A').$$
Hence $\phi_A$ is well-defined.

The map $\phi_A$ is linear by its well-definedness and the linearity
of $\phi$,
and $\phi_A$ is a bijection since $\phi_{A}^{-1}$ can be defined
by $\phi^{-1}$ in a similar fashion.
\end{proof}

\item
Likewise, we define the second map
\beq
\phi_{A^\perp}: \frac{(E+A)\cap A^\perp}{A^\perp\cap A  }
\to \frac{(E'+A')\cap A'^\perp}{A'^\perp\cap A'  }.
\eeq
Given $\v+(A^\perp\cap A)\in \frac{(E+A)\cap A^\perp}{A^\perp\cap A  }$,
there are $\e\in E$ and $\a\in A$ such that
$\v=\e-\a\in A^\perp$. So $\e=\a+\v\in E\cap (A+A^\perp)$, and
thus $\phi(\e)\in E'\cap (A'+A'^\perp)$ by (C2).
There are $\a'\in A'$ and $\v'\in A'^\perp$
such that $\phi(\e)=\a'+\v'$. Define
\beq
\phi_{A^\perp}\left(\v+(A^\perp\cap A )\right)
:=\v'+(A'^\perp\cap A').
\eeq

\begin{lem}\label{phi_A^perp_well-defined}
$\phi_{A^\perp}$ is a well-defined linear bijection of metric spaces.
\end{lem}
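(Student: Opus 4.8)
The plan is to mirror the proof of Lemma~\ref{phi_A_well-defined} almost verbatim, exploiting the symmetry between $A$ and $A^\perp$: since $(A^\perp)^\perp=A$ (as $V$ is nonsingular) and $A^\perp\cap A^\perp{}^\perp=A^\perp\cap A$, the pair $(A^\perp,(A^\perp)^\perp)=(A^\perp,A)$ is of exactly the same type as the pair $(A,A^\perp)$, and conditions (C2)--(C4) are visibly symmetric under swapping $A$ and $A^\perp$. Thus in principle one could simply say ``apply Lemma~\ref{phi_A_well-defined} with $A$ replaced by $A^\perp$''. However, since the metric $\bar b'$ on $\frac{(E'+A')\cap A'^\perp}{A'^\perp\cap A'}$ was defined via the ambient form, I would spell out the argument to make sure the metric structures match.

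First I would prove well-definedness: suppose $\v_1+(A^\perp\cap A)=\v_2+(A^\perp\cap A)$ with $\v_i=\e_i-\a_i\in A^\perp$ for $\e_i\in E$, $\a_i\in A$, $i=1,2$. Then $\v_1-\v_2\in A^\perp\cap A$, so $(\e_1-\e_2)-(\a_1-\a_2)\in A^\perp\cap A\subset A$; since $\a_1-\a_2\in A$ this gives $\e_1-\e_2\in E\cap A$, whence $\phi(\e_1)-\phi(\e_2)\in E'\cap A'$ by (C3). Writing $\phi(\e_i)=\a_i'+\v_i'$ with $\a_i'\in A'$, $\v_i'\in A'^\perp$ (possible by (C2), exactly as in the definition of $\phi_{A^\perp}$), we get $\v_1'-\v_2'=(\phi(\e_1)-\phi(\e_2))-(\a_1'-\a_2')$, and both terms on the right lie in $A'$, so $\v_1'-\v_2'\in A'^\perp\cap A'$, i.e. $\v_1'+(A'^\perp\cap A')=\v_2'+(A'^\perp\cap A')$.

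Next, linearity of $\phi_{A^\perp}$ follows from well-definedness together with the linearity of $\phi$ (choosing representatives additively), and bijectivity follows because $\phi^{-1}:E'\to E$ satisfies the symmetric hypotheses (C2)$'$--(C4)$'$ obtained by interchanging primed and unprimed objects, so the same construction applied to $\phi^{-1}$ yields a two-sided inverse of $\phi_{A^\perp}$. Finally, to see that $\phi_{A^\perp}$ respects the induced metrics, I would note that for $\v_i=\e_i-\a_i\in A^\perp$ and $\phi(\e_i)=\a_i'+\v_i'$ as above, $\bar b'(\v_1'+\cdots,\v_2'+\cdots)=b'(\v_1',\v_2')$; since $\v_1'=\phi(\e_1)-\a_1'$ with $\a_1'\in A'$ and $\v_2'\in A'^\perp$ one has $b'(\a_1',\v_2')=0$, so $b'(\v_1',\v_2')=b'(\phi(\e_1),\v_2')$, and iterating on the second slot gives $b'(\phi(\e_1),\phi(\e_2))=b(\e_1,\e_2)$ since $\phi$ is an isometry; the same reduction on the unprimed side gives $\bar b(\v_1+\cdots,\v_2+\cdots)=b(\e_1,\e_2)$, so the two induced bilinear forms agree under $\phi_{A^\perp}$.

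The only step requiring genuine care -- the ``main obstacle'' -- is the metric-preservation verification: one must make sure that the cross terms $b'(\a_i',\v_j')$ and $b(\a_i,\v_j)$ really do vanish, which uses precisely that $\a_i\in A$, $\v_j\in A^\perp$ and $\a_i'\in A'$, $\v_j'\in A'^\perp$, together with the orthogonality $b(A,A^\perp)=0$. Everything else is a routine transcription of the proof of Lemma~\ref{phi_A_well-defined} with the roles of $A$ and $A^\perp$ exchanged.
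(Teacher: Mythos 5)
Your well-definedness argument and the linearity/bijectivity claims are correct and are exactly the ``analogous'' argument the paper has in mind (the paper simply writes that the proof is analogous to that of Lemma \ref{phi_A_well-defined} and skips it): one replaces $E\cap A^\perp$ and (C4) by $E\cap A$ and (C3), and the inverse is built from $\phi^{-1}$.

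However, your third paragraph contains a genuine error, and it is located precisely at the step you single out as the ``main obstacle.'' You claim that iterating the substitution $\v_2'=\phi(\e_2)-\a_2'$ in the second slot gives $b'(\phi(\e_1),\v_2')=b'(\phi(\e_1),\phi(\e_2))$; this requires $b'(\phi(\e_1),\a_2')=0$. But $\phi(\e_1)=\a_1'+\v_1'$ with $\a_1'\in A'$ and $\v_1'\in A'^\perp$, so $b'(\phi(\e_1),\a_2')=b'(\a_1',\a_2')$, which is a term \emph{inside} $A'$, not a cross term between $A'$ and $A'^\perp$, and it does not vanish unless $A'$ is totally isotropic. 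The correct identities are
$$\bar b\bigl(\v_1+(A^\perp\cap A),\,\v_2+(A^\perp\cap A)\bigr)=b(\e_1,\e_2)-b(\a_1,\a_2),\qquad
\bar b'\bigl(\v_1'+(A'^\perp\cap A'),\,\v_2'+(A'^\perp\cap A')\bigr)=b(\e_1,\e_2)-b'(\a_1',\a_2'),$$
so $\phi_{A^\perp}$ preserves the induced metrics if and only if $\phi_A$ does. This is exactly the observation stated after Theorem \ref{Witt-main} (``one of $\phi_A$ and $\phi_{A^\perp}$ is an isometry if and only if both are''), and it is a \emph{hypothesis} there, not an automatic fact: if metric preservation were automatic, the condition in Theorem \ref{Witt-main} would be vacuous, contradicting the counterexample in Remark \ref{counter-example}. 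The phrase ``linear bijection of metric spaces'' in the lemma asserts only a linear bijection between the two metric spaces, not an isometry; you should delete the metric-preservation paragraph (or replace it with the remark that isometry of $\phi_{A^\perp}$ is equivalent to isometry of $\phi_A$ and is not claimed here).
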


The proof is analogous and so skipped.
\een

The following is one of the main theorems of this paper.

\begin{thm}\label{Witt-main}
Let $(V,b)$ and $(V',b')$ be isometric nonsingular symmetric metric spaces over $\F$.
Let subspaces $E, A\subset V$ and  $E', A'\subset V'$
satisfy that $A\approx A'$  and that $\phi:E\to E'$ is an isometry.
Then
$\phi$ can be extended to an isometry $\phi_{V}:V\to V'$ that
sends $A$ to $A'$  if and only if (C2)$\sim$(C4) hold and
one of $\phi_{A}$ and $\phi_{A^\perp}$ is an isometry.
\end{thm}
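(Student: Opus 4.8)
The necessity of (C2)–(C4) and of the claim that one of $\phi_A,\phi_{A^\perp}$ is an isometry is the easy direction: if $\phi$ extends to an isometry $\phi_V$ with $\phi_V(A)=A'$, then $\phi_V(A^\perp)=A'^\perp$ by Lemma \ref{isometry}(1), hence $\phi_V$ carries each of $A$, $A^\perp$, $A\cap A^\perp$, $A+A^\perp$ to the corresponding primed space, and intersecting with $E$ (Lemma \ref{isometry}(2)) gives (C2)–(C4). Moreover $\phi_V$ descends to an isometry $\overline{\phi_V}:\tfrac{A+A^\perp}{A^\perp\cap A}\to\tfrac{A'+A'^\perp}{A'^\perp\cap A'}$ of metric spaces, and one checks directly from the definitions that $\overline{\phi_V}$ restricts to $\phi_A$ on $\tfrac{(E+A^\perp)\cap A}{A^\perp\cap A}$ and to $\phi_{A^\perp}$ on $\tfrac{(E+A)\cap A^\perp}{A^\perp\cap A}$; so both $\phi_A$ and $\phi_{A^\perp}$ are isometries. (In fact it will be useful to record that $\phi_A$ isometry $\iff$ $\phi_{A^\perp}$ isometry, since both are restrictions of the same map $\overline{\phi_V}$; but we only need "one of them".)

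**The sufficiency direction** is the substance. Assume (C2)–(C4) and, say, that $\phi_A$ is an isometry. First apply Lemma \ref{thm:iso-Witt-gen} (the $k=3$ case) to the self-dual flags \eqref{A-self-dual}: since (C1) holds (it follows from (C3),(C4)) and (C2) holds, $\phi$ extends to an isometry $\phi_V':V\to V'$ with $\phi_V'(A^\perp\cap A)=A'^\perp\cap A'$ and $\phi_V'(A+A^\perp)=A'+A'^\perp$. Replacing $E$ by $E+(A^\perp\cap A)$ and $\phi$ by $\phi_V'|_{E+(A^\perp\cap A)}$ changes none of the hypotheses (one checks the four conditions and $\phi_A$ survive, using Lemma \ref{space}(5)), so we may assume $A^\perp\cap A\subset E$. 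Similarly, passing through $\phi_V'$ we may assume $E\subset A+A^\perp$ is already arranged on the relevant quotients — more precisely, the point is to reduce to the situation where the ambient problem splits across the decomposition $V=A\odot A^\perp$ modulo the radical $R:=A^\perp\cap A$.

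**The core construction.** Work inside $\overline V:=(A+A^\perp)/R$, a nonsingular metric space with orthogonal decomposition $\overline V=\overline A\odot\overline{A^\perp}$ where $\overline A=A/R$, $\overline{A^\perp}=A^\perp/R$ are nonsingular and dual-paired off each other (both carry $\bar b$, and each is the orthogonal complement of the other in $\overline V$). The image of $E$ in $\overline V$, call it $\overline E$, has $A$-projection $\tfrac{(E+A^\perp)\cap A}{R}$ and $A^\perp$-projection $\tfrac{(E+A)\cap A^\perp}{R}$, and $\phi_A$ (resp. $\phi_{A^\perp}$) is exactly the induced map on these projections. Now apply Corollary \ref{E-A-perp-*} (or rather its proof technique) combined with the hyperbolic-pairing construction used in the proof of Lemma \ref{thm:iso-Witt-gen}: extend the isometry $\phi_A:\overline A\text{-proj}\to\overline{A'}\text{-proj}$ to an isometry $\psi_A:\overline A\to\overline{A'}$ by Witt's theorem applied in the nonsingular space $\overline A$; since $\overline{A^\perp}$ is canonically $\overline A^*$ via $\bar b$, transport $\psi_A$ to an isometry $\psi_{A^\perp}:\overline{A^\perp}\to\overline{A'^\perp}$ by $(\psi_A^{-1})^*$ exactly as $\phi_\beta$ was built from $\tau_b,\tau_{b'}$ in Lemma \ref{thm:iso-Witt-gen}; then $\psi_A\odot\psi_{A^\perp}$ is an isometry of $\overline V$ carrying $\overline A$ to $\overline{A'}$ and $\overline{A^\perp}$ to $\overline{A'^\perp}$. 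The crucial check is that $\psi_A\odot\psi_{A^\perp}$ agrees with the already-constructed map on $\overline E$: on the $\overline A$-projection it is $\phi_A$ by construction, and the dual-transport formula $\bar b'(\psi_A(\bar\a),\psi_{A^\perp}(\bar\x))=\bar b(\bar\a,\bar\x)$ forces agreement with $\phi_{A^\perp}$ on the $A^\perp$-projection of $E$ — this is the same computation as \eqref{phi_1} and is where the hypothesis "$\phi_A$ is an isometry" is consumed (it guarantees $\psi_A$ exists; the dual side is then automatic). Pull this back to an isometry of $A+A^\perp$ agreeing with $\phi$ on $E$ and sending $A\mapsto A'$, $A^\perp\mapsto A'^\perp$, and finally extend to all of $V$ by Witt's theorem.

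**Main obstacle.** The delicate point is bookkeeping the reductions so that the hypotheses (C2)–(C4) and "$\phi_A$ isometry" are genuinely preserved at each stage — in particular verifying, via Lemma \ref{space}(5)–(7), that enlarging $E$ to $E+R$ and restricting a Witt extension does not disturb $\phi_A$, and that after reduction the two projections of $E$ inside $\overline V$ really do recombine to $\overline E$ with no extra intersection (the analogue of $\widetilde E\cap V_2=\{\0\}$ in Lemma \ref{thm:iso-Witt-gen}). The symmetric role of $\phi_A$ and $\phi_{A^\perp}$ means the case "$\phi_{A^\perp}$ isometry" is handled identically after swapping $A\leftrightarrow A^\perp$.
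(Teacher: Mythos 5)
Your necessity argument and the broad strategy for sufficiency (split the problem along $A\odot A^\perp$ modulo the radical $R:=A^\perp\cap A$, extend $\phi_A$ by Witt's theorem in the nonsingular quotient, recombine) are in the same spirit as the paper's proof, and your first reduction is sound: applying Lemma \ref{thm:iso-Witt-gen} to the flags \eqref{A-self-dual} and enlarging $E$ to $E+R$ is legitimate and in fact disposes of one genuine difficulty (the paper's construction of $\phi_0$ on $R$ compatible with $\widetilde E$). But two steps of your core construction do not work as written. First, $\overline{A^\perp}=A^\perp/R$ is \emph{not} canonically $\overline A^{\,*}$ via $\bar b$: in $\overline V=\overline A\odot\overline{A^\perp}$ the two summands are mutually orthogonal, so $\bar b$ restricted to $\overline A\times\overline{A^\perp}$ is identically zero and the ``dual transport'' $\psi_{A^\perp}=(\psi_A^{-1})^*$ is meaningless; your ``crucial check'' $\bar b'(\psi_A(\bar\a),\psi_{A^\perp}(\bar\x))=\bar b(\bar\a,\bar\x)$ is $0=0$ and establishes nothing. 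You have confused this orthogonal situation with the hyperbolic pairing of Lemma \ref{thm:iso-Witt-gen}. The fix is what the paper does: note that since the $A$- and $A^\perp$-projections are orthogonal, $\phi_A$ is an isometry iff $\phi_{A^\perp}$ is, and extend $\phi_{A^\perp}$ \emph{independently} by Witt's theorem inside $\overline{A^\perp}$. One also needs the direct-sum statement \eqref{E_(A)-E_(A^perp)} to know that the two extended maps agree with $\phi$ on all of $E\cap(A+A^\perp)$, not just on $(E\cap A)+(E\cap A^\perp)$.

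The more serious gap is the sentence ``we may assume $E\subset A+A^\perp$.'' That is not a reduction; it is the problem. Writing $E=[E\cap(A+A^\perp)]\oplus\widetilde E$, any isometry $\widehat\phi$ of $A+A^\perp$ you build that extends $\phi|_{E\cap(A+A^\perp)}$ and sends $A$ to $A'$ must still be combined with $\phi|_{\widetilde E}$, and the combination is an isometry only if $b(\widetilde\e,\v)=b'(\phi(\widetilde\e),\widehat\phi(\v))$ for all $\widetilde\e\in\widetilde E$ and all $\v\in A+A^\perp$ --- a condition your construction on the quotient $\overline V$ cannot see, since $\widetilde E$ pairs nontrivially with $A+A^\perp$ and in particular with complements of $[E\cap(A+A^\perp)]+R$. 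The paper's proof spends its final step precisely here: it corrects $\widehat\phi(\u_i)$ by vectors $\v'_{\u_i}\in A'^\perp\cap A'$, whose existence rests on the surjectivity of the Riesz map $\tau':A'^\perp\cap A'\to(\widetilde E')^*$ (using nonsingularity of $V'$ and $(A'+A'^\perp)\cap\widetilde E'=\{\0'\}$), and this correction preserves $\widehat\phi(A)=A'$ only because the corrections live in the radical $A'^\perp\cap A'$. Without this adjustment your $\phi_V$ extends $\phi|_{E\cap(A+A^\perp)}$ but not $\phi$, so the proof is incomplete exactly where the theorem has content.
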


 Since $\phi$ is an isometry, one of $\phi_{A}$ and $\phi_{A^\perp}$ is
an isometry if and only if both of them are isometries
by the constructions of $\phi_{A}$ and $\phi_{A^\perp}$.

\begin{rem}
Theorem \ref{Witt-main} embraces several other results.
If $E\subset A$,
then $\phi_{A^\perp}$ is trivial,
and Theorem \ref{Witt-main} is  a combination of Lemma \ref{thm:Witt-deg}
and Witt's theorem.
If $E\perp A$, then $\phi_{A}$ is trivial, and
Theorem \ref{Witt-main} is
Corollary \ref{E-A-perp}.
If $A$ (resp. $A^\perp$) is totally isotropic,
then $\phi_{A}$ (resp. $\phi_{A^\perp}$) is trivial, and Theorem
\ref{Witt-main} is equivalent to Lemma \ref{thm:iso-Witt-gen}.
If $E\cap (A+A^\perp)=(E\cap A)+(E\cap A^\perp)$,
then Theorem \ref{Witt-main} becomes Corollary \ref{Witt-main-cor-1},
which
plays an important role in determining the isometry
of two generic flags in Section \ref{sect:flag}.
\end{rem}

\begin{proof}[Proof of Theorem \ref{Witt-main}]
It suffices to prove the sufficient part.
Recall that (C1) is implied by (C3) and (C4).

\ben
\item We shall extend $\phi|_{E\cap (A+A^\perp)}$ to an isometry
$\widehat\phi:A+A^\perp\to A'+A'^\perp$ that sends $A$ to $A'$,
 and
\beq\label{widehat-phi-sat}
b(\e,\v)
=b'(\phi(\e),\widehat\phi(\v))
\eeq
for all $\e\in E$ and $\v\in [E\cap(A+A^\perp)]+(A^\perp\cap A)$.

{\bf Step 1:} Decompose $E\cap (A+A^\perp)$.

Select $E_{A}$ and $E_{A^\perp}$
such that
\bea
E\cap A
&=& (E\cap A^\perp\cap A)\odot E_{A},
\label{E-cap-A}
\\
E\cap A^\perp
&=& (E\cap A^\perp\cap A)\odot E_{A^\perp}.
\label{E-cap-A^perp}
\eea
Then $E_{A}\cap (E\cap A^\perp)=
(E_{A}\cap A)\cap (E\cap A^\perp)=E_A\cap (E\cap A^\perp\cap A)
=\{\0\}$. So
\beq
(E\cap A)+(E\cap A^\perp)=(E\cap A^\perp\cap A)\odot E_{A}\odot E_{A^\perp}.
\label{E-cap-A-+-E-cap-A^perp}
\eeq
Select $E_{A, A^\perp}$ such that
\bea
E\cap (A+A^\perp)
&=& [(E\cap A)+(E\cap A^\perp)]\oplus E_{A, A^\perp}
\label{E-A-A^perp-complement}\\
&=& (E\cap A^\perp\cap A)\odot [(E_{A}\odot E_{A^\perp})\oplus E_{A, A^\perp}].
\label{E-cap-A-A^perp-decomp}
\eea

{\bf Step 2:} Decompose $A+A^\perp$ in accordance with \eqref{E-cap-A-A^perp-decomp}.

By the selection of $E_{A,A^\perp}$,
$$
E_{A,A^\perp}\cap A=\{\0\},\qquad
E_{A,A^\perp}\cap A^\perp=\{\0\},\qquad
E_{A,A^\perp}\subset A+A^\perp.
$$
Let $\{\v_1,\cdots,\v_k\}$ be a basis of $E_{A,A^\perp}$.
Then $\v_i=\a_i+\a_{i\perp}$ for some
$\a_i\in A$ and $\a_{i\perp}\in A^\perp$,
$i=1,\cdots, k$.
The following observations are useful:
\ben
\item
The choices of  $\a_i$ and $\a_{i\perp}$ for a given $\v_i$ are not unique
if $A^\perp\cap A\ne \{\0\}$.

\item The set $\{\a_1,\cdots,\a_k\}$ is always linearly independent.
Suppose $\sum_{i=1}^{k} t_i\a_i=\0$ for   $t_i\in\F$,
then $\sum_{i=1}^{k} t_i\v_i=\sum_{i=1}^{k} t_i\a_{i\perp}
\in E_{A,A^\perp}\cap A^\perp=\{\0\}$.
This forces $t_1=\cdots=t_k=0$ by
the linear independence of $\{\v_1,\cdots,\v_k\}$.
\item
Likewise,   $\{\a_{1\perp},\cdots,\a_{k\perp}\}$ is linearly independent.

\item
Denote
\bea\label{E_(A)-E_(A^perp)-def}
E_{(A)}
:= \spn\{\a_1,\cdots,\a_k\},
\qquad
E_{(A^\perp)}
:= \spn\{\a_{1\perp},\cdots,\a_{k\perp}\}.
\eea
Then $E_{(A)}\subset A$, $E_{(A^\perp)}\subset A^\perp$, and
\beq
E_{A,A^\perp}\subset E_{(A)}\odot E_{(A^\perp)}.
\label{E_A,A^perp}
\eeq
We claim that
\beq\label{E_(A)-E_(A^perp)}
\left ( E_{(A)}\odot E_{(A^{\perp})}\right )\cap \left [(E\cap A)+(E\cap A^\perp)+(A^\perp\cap A)\right]=\{\0\}.
\eeq
Suppose $\x=\a+\b_{\perp}=\e_A+\e_{A^\perp}+\a_{0}$
for $\a\in E_{(A)}$, $\b_{\perp}\in E_{(A^\perp)}$, $\e_A\in E\cap A$,
 $\e_{A^\perp}\in E\cap A^\perp$, and $\a_{0}\in A^\perp\cap A$,
 then by  \eqref{E_(A)-E_(A^perp)-def}, there exist
$\a_{\perp}\in E_{(A^\perp)}$ and $\e\in E_{A, A^\perp}$ such that
$\e=\a+\a_{\perp}$.
So
\beq\label{e-a-b-e-e}
\e=\a+\b_{\perp}+\a_{\perp}-\b_{\perp}=\e_{A}+\e_{A^\perp}+(\a_0+\a_{\perp}-\b_{\perp}).
\eeq
Then $\a_0+\a_{\perp}-\b_{\perp}\in A^\perp$
by $\a_0\in A^\perp\cap A$ and $\a_{\perp},\b_{\perp}\in E_{(A^\perp)}\subset A^\perp$.
Moreover, $\a_0+\a_{\perp}-\b_{\perp}\in E$ by \eqref{e-a-b-e-e} and
the fact
$\e, \e_{A}, \e_{A^\perp}\in E$.
So $\a_0+\a_{\perp}-\b_{\perp}\in E\cap A^\perp$. Thus
$\e\in (E\cap A)+(E\cap A^\perp)$
by \eqref{e-a-b-e-e}. This forces $\e=\0$ by $\e\in E_{A,A^\perp}$
and \eqref{E-A-A^perp-complement}. So $\a=\0$. Likewise, $\b_{\perp}=\0$.
Therefore, $\x=\0$ and  \eqref{E_(A)-E_(A^perp)} hold.

\item By \eqref{E-cap-A} and \eqref{E-cap-A^perp},
\bea
(E\cap A)+(A^\perp\cap A) &=& (A^\perp\cap A)\odot E_A,
\label{E-cap-A+A^perp-cap-A}
\\
(E\cap A^\perp)+(A^\perp\cap A) &=& (A^\perp\cap A)\odot E_{A^\perp}.
\label{E-cap-A^perp+A^perp-cap-A}
\eea
We claim that
\beq\label{E+A^perp-cap-A}
(E+A^\perp)\cap A=\left( [E\cap (A+A^\perp)]+A^\perp\right)\cap A.
\eeq
Every $\x\in (E+A^\perp)\cap A$ can be written as
$\x=\e-\a_{\perp}=\a$ for $\e\in E$, $\a_{\perp}\in A^\perp$, and $\a\in A$.
Then $\e=\a+\a_{\perp}\in E\cap(A+A^\perp)$ and so $\x\in \left([E\cap (A+A^\perp)]+A^\perp\right)\cap A$.
This proves \eqref{E+A^perp-cap-A}. By \eqref{E+A^perp-cap-A},
 \eqref{E-cap-A-A^perp-decomp}, and Lemma \ref{space}(5),
\bea
(E+A^\perp)\cap A
&=& [(E\cap A^\perp\cap A)+E_A+E_{A^\perp}+E_{A,A^\perp}+A^\perp]\cap A
\qquad
\notag
\\
&=& (E_A+E_{A,A^\perp}+A^\perp)\cap A
\notag
\\
&=& (E_A+ E_{(A)}+A^\perp)\cap A
\notag
\\
&=&
(A^\perp\cap A)\odot (E_{A}\oplus E_{(A)}).
\label{E+A^perp-cap-A-decomp}
\eea

Likewise,
\beq
(E+A)\cap A^\perp=(A^\perp\cap A)\odot (E_{A^\perp}\oplus E_{(A^\perp)}).
\label{E+A-cap-A^perp-decomp}
\eeq
By \eqref{E_(A)-E_(A^perp)},
\eqref{E+A^perp-cap-A-decomp}  and
\eqref{E+A-cap-A^perp-decomp},
\beq
(E+A^\perp)\cap A+(E+A)\cap A^\perp=
(A^\perp\cap A)\odot (E_{A}\oplus E_{(A)})\odot
(E_{A^\perp}\oplus E_{(A^\perp)}).
\label{0.48}
\eeq
By \eqref{E+A^perp-cap-A-decomp}, $E_A\oplus E_{(A)}$ can be extended to $V_A$ such that
\beq
A=(A^\perp\cap A)\odot V_A.
\label{A-decomp}
\eeq
By \eqref{E+A-cap-A^perp-decomp}, $E_{A^\perp}\oplus E_{(A^\perp)}$
can be extended to $V_{A^\perp}$ such that
\beq
A^\perp=(A^\perp\cap A)\odot V_{A^\perp}.
\label{A-perp-decomp}
\eeq
Then
\beq
A+A^\perp=(A^\perp\cap A)\odot V_{A}\odot V_{A^\perp}.
\label{V-+-V^perp-decomp}
\eeq
The decompositions \eqref{E-cap-A-A^perp-decomp} and \eqref{V-+-V^perp-decomp}
satisfy that
\beq
E\cap A^\perp\cap A\subset A^\perp\cap A,\quad
E_{A}\subset V_A,\quad
E_{A^\perp}\subset V_{A^\perp},\quad
E_{A,A^\perp}\subset V_{A}\odot V_{A^\perp}.
\eeq
Both $V_A$ and $V_{A^\perp}$ are nonsingular metric subspaces by \eqref{V-+-V^perp-decomp}.
\een

{\bf Step 3:}
Recall that $\frac{A+A^\perp}{A^\perp\cap A}$ has a  metric $\bar b$ defined by
\eqref{bar-b}.
The canonical quotient map $\rho: A+A^\perp\to \frac{A+A^\perp}{A^\perp\cap A}$
is metric-preserving.
By \eqref{E-cap-A+A^perp-cap-A},
\eqref{E+A^perp-cap-A-decomp},
\eqref{A-decomp},
\eqref{V-+-V^perp-decomp},
$\rho$ acts as isometries in the following commutative diagram
with inclusion maps:
\beq
\xymatrix{
E_{A} \ar[d]_{\rho} \ar@{^{(}->}[r]
    &E_{A}\oplus E_{(A)} \ar[d]_{\rho} \ar@{^{(}->}[r]
        &V_{A} \ar[d]_{\rho} \ar@{^{(}->}[r]
            &V_{A}\odot V_{A^\perp} \ar[d]_{\rho}
\\
\frac{(E\cap A)+(A^\perp\cap A)}{A^\perp\cap A} \ar@{^{(}->}[r]
    &\frac{(E+A^\perp)\cap A}{A^\perp\cap A} \ar@{^{(}->}[r]
        &\frac{A}{A^\perp\cap A} \ar@{^{(}->}[r]
            &\frac{A+A^\perp}{A^\perp\cap A}
}
\label{diagram-tau}
\eeq
Similarly, $\rho$ acts as isometries on the commutative diagram
obtained by interchanging  $A$ and $A^\perp$ in \eqref{diagram-tau}.

{\bf Step 4:} In $V'$,
denote $E_{A'}':=\phi(E_A)$, $E_{A'^\perp}':=\phi(E_{A^\perp})$,
$E_{A',A'^\perp}':=\phi(E_{A,A^\perp})$.
Then by (C1), (C3), and (C4),
\bea
E'\cap A'
&=& (E'\cap A'^\perp\cap A')\odot E'_{A'},
\\
E'\cap A'^\perp
&=& (E'\cap A'^\perp\cap A')\odot E'_{A'^\perp}.
\eea
By (C1) and (C4),
\beq
E'\cap (A'+A'^\perp)=
(E'\cap A'^\perp\cap A')\odot [(E'_{A'}\odot E'_{A'^\perp})\oplus E'_{A', A'^\perp}].
\label{0.56}
\eeq
The subspace
$E_{A',A'^\perp}'$ has a basis $\{\v_1',\cdots,\v_k'\}$
where $\v_i':=\phi(\v_i)$ for $i=1,\cdots,k$.
Write $\v_i'=\a_i'+\a_{i\perp}'$ for certain $\a_i'\in A'$ and $\a_{i\perp}'\in A'^\perp$.
Denote
\beq
E_{(A')}':=\spn\{\a_1',\cdots,\a_k'\},\qquad
E_{(A'^\perp)}':=\spn\{\a_{1\perp}',\cdots,\a_{k\perp}'\}.
\eeq
Extend $E'_{A'}\oplus E_{(A')}'$ to $V_{A'}'$ such that
\beq
A'=(A'^\perp\cap A')\odot V_{A'}.
\eeq
Extend
 $E'_{A'^\perp}\oplus E'_{(A'^\perp)}$ to $V'_{A'^\perp}$ such that
\beq
A'^\perp=(A'^\perp\cap A')\odot V'_{A'^\perp}.
\eeq
Then
\beq
A'+A'^\perp=(A'^\perp\cap A')\odot V'_{A'}\odot V'_{A'^\perp}.
\label{V'-+-V'^perp-decomp}
\eeq
The canonical quotient map
$\rho': A'+A'^\perp\to \frac{A'+A'^\perp}{A'^\perp\cap A'}$
is metric-preserving,
and $\rho'$ acts as isometries in the commutative diagram:
\beq
\xymatrix{
E'_{A'} \ar[d]_{\rho'} \ar@{^{(}->}[r]
    &E'_{A'}\oplus E'_{(A')} \ar[d]_{\rho'} \ar@{^{(}->}[r]
        &V'_{A'} \ar[d]_{\rho'} \ar@{^{(}->}[r]
            &V'_{A'}\odot V'_{A'^\perp} \ar[d]_{\rho'}
\\
\frac{(E'\cap A')+(A'^\perp\cap A')}{A'^\perp\cap A'} \ar@{^{(}->}[r]
    &\frac{(E'+A'^\perp)\cap A'}{A'^\perp\cap A'} \ar@{^{(}->}[r]
        &\frac{A'}{A'^\perp\cap A'} \ar@{^{(}->}[r]
            &\frac{A'+A'^\perp}{A'^\perp\cap A'}
}
\label{diagram-tau'}
\eeq
Likewise, $\rho'$ acts as isometries on the commutative diagram
obtained by interchanging  $A'$ and $A'^\perp$
in \eqref{diagram-tau'}.

{\bf Step 5:} We now extend $\phi|_{E\cap (A+A^\perp)}:E\cap (A+A^\perp)\to E'\cap (A'+A'^\perp)$
to an isometry $\widehat \phi:A+A^\perp\to A'+A'^\perp$,
such that $A\overset{\widehat\phi}{\approx}A'$,
 and
$b(\e,\v) =b'(\phi(\e),\widehat\phi(\v))$
 for all $\e\in E$ and $\v\in [E\cap (A+A^\perp)]+(A^\perp\cap A)$.

The isometry
$\rho'|_{E'_{A'}\oplus E'_{(A')}}: E'_{A'}\oplus E'_{(A')}\to
\frac{(E'+A'^\perp)\cap A'}{A'^\perp\cap A'}$
is invertible.
By the sufficient conditions in the theorem, $\phi_A$ defined in \eqref{phi_A} is an isometry. So
\beq
(\rho'|_{E'_{A'}\oplus E'_{(A')}})^{-1}\circ\phi_A\circ(\rho|_{E_A\oplus E_{(A)}}):
E_A\oplus E_{(A)}
\to E'_{A'}\oplus E'_{(A')}
\label{map-A}
\eeq
is an isometry.
Moreover, by the construction of $\phi_A$,
the map \eqref{map-A} restricted on  $E_A$
is equal to $\phi|_{E_A}$, and it
 sends $\a_i$ to $\a_i'$ for $i=1,\cdots,k$.
Apply  Witt's theorem on $E_A\oplus E_{(A)}\subset V_A$ and
$E'_{A'}\oplus E'_{(A')}\subset V'_{A'}$
where $V_A\approx \frac{A}{A^\perp\cap A}\approx\frac{A'}{A'^\perp\cap A'}\approx V'_{A'}$
are nonsingular.
The isometry \eqref{map-A} can be extended to
an isometry
\beq
\phi_{1}:V_{A}\to V'_{A'}.
\eeq
Likewise,
\beq
(\rho'|_{E'_{A'^\perp}\oplus E'_{(A'^\perp)}})^{-1}\circ\phi_{A^\perp}\circ(\rho|_{E_{A^\perp}\oplus E_{(A^\perp)}}):
E_{A^\perp}\oplus E_{(A^\perp)}
\to E'_{A'^\perp}\oplus E'_{(A'^\perp)}
\label{map-A^perp}
\eeq
is an isometry. Its restriction on $E_{A^\perp}$ is equal to
$\phi|_{E_{A^\perp}}$, and it sends
$\a_{i\perp}$ to $\a_{i\perp}'$ for $i=1,\cdots,k$.
By Witt's theorem, the map \eqref{map-A^perp} can be extended to an isometry
\beq
\phi_{2}:V_{A^\perp}\to V'_{A'^\perp}.
\eeq

Next, we extend $\phi|_{E\cap A^\perp\cap A}: E\cap A^\perp\cap A
\to E'\cap A'^\perp\cap A'$ to an isometry
$$\phi_0:
A^\perp\cap A\to A'^\perp\cap A'$$
such that $b(\e, \v)=b'(\phi(\e),\phi_0(\v))$
for all $\e\in E$ and $\v\in A^\perp\cap A$.
\ben
\item
Select $\widetilde E$ such that
\beq\label{widetilde-E}
E=[E\cap(A+A^\perp)]\oplus\widetilde E.
\eeq
Denote $\widetilde E':=\phi(\widetilde E)$. Then (C2) implies that
\beq\label{widetilde-E'}
E'=[E'\cap(A'+A'^\perp)]\oplus\widetilde E'.
\eeq
Let $\{\w_1,\cdots,\w_{\ell}\}$ be a basis of $E^\perp\cap E\cap A^\perp\cap A$.
On one hand, extend $\{\w_1,\cdots,\w_{\ell}\}$
to a basis $\{\w_1,\cdots,\w_{p}\}$ of $E\cap A^\perp\cap A$.
On the other hand, extend $\{\w_1,\dots,\w_{\ell}\}$ to a basis
$\{\w_1,\cdots,\w_{\ell}, \w_{p+1},\cdots,\w_{q}\}$ of $E^\perp\cap A^\perp\cap A$.
Then $\{\w_1,\cdots,\w_q\}$ is a basis of
$(E\cap A^\perp\cap A)+(E^\perp\cap A^\perp\cap A)$.
Extend $\{\w_1,\cdots,\w_q\}$ to a basis $\{\w_1,\cdots,\w_m\}$ of $A^\perp\cap A$. So $\ell\le p\le q\le m$.

\item
Since $V'$ is nonsingular, and
$(A'^\perp\cap A')+\widetilde E'^{\perp}=V'$ by taking orthogonal complement on
$(A'+A'^\perp)\cap\widetilde E'=\{\0\}$,
 the Riesz representation
$\tau': A'^\perp\cap A'\to (\widetilde E')^*,$
defined by
\beq\label{tau'}
\tau'(\v')(\e'):=b'(\e',\v')
\qquad \text{for}\quad \v'\in A'^\perp\cap A',\ \e'\in\widetilde E',
\eeq
is surjective.
Given one
$$
\w_i\in(A^\perp\cap A)-
\left[(E\cap A^\perp\cap A)+(E^\perp\cap A^\perp\cap A)\right]
\quad\text{for}\ \  i=q+1,\cdots,m,
$$  we define $\varphi_{\w_i}:\widetilde E'\to \F$ by
\beq
\varphi_{\w_i}(\e'):=b( \phi^{-1}(\e'), \w_i)
\qquad
\text{for \ }\e'\in\widetilde E'.
\eeq
Then $\varphi_{\w_i}\in (\widetilde E')^*$. There exists
$\w_i'\in A'^\perp\cap A'$ such that $\tau'(\w_i')=\varphi_{\w_i}$, that is,
\[
b'(\e',\w_i')=b(\phi^{-1}(\e'), \w_i)
\qquad\text{for \ }\e'\in\widetilde E'.
\]
Equivalently, $b'(\phi(\e),\w_i')=b(\e,\w_i)$
for $\e\in \widetilde E$ and $i=q+1,\cdots,m$.

\item By (C1) and Lemma \ref{isometry}(2),
  $\{\phi(\w_1),\cdots,\phi(\w_{\ell})\}$
is a basis of $E'^\perp\cap E'\cap A'^\perp\cap A'$. Then
\BEA
\dim (E^\perp\cap A^\perp\cap A)
&=& \dim V-\dim (E^\perp\cap A^\perp\cap A)^\perp
\\
&=&  \dim V-\dim (E+A+A^\perp)
\\
&=& \dim V-\dim E-\dim (A+A^\perp)+\dim [E\cap (A+A^\perp)].
\EEA
Notice that $A+A^\perp\approx A'+A'^\perp$ by Lemma \ref{isometry}.
So $\dim (E^\perp\cap A^\perp\cap A)=\dim (E'^\perp\cap A'^\perp\cap A')$.
Therefore,  $\{\phi(\w_1),\cdots,\phi(\w_{\ell})\}$ can be extended to
a basis $\{\phi(\w_1),\cdots,\phi(\w_{\ell}), \w_{p+1}',\cdots,\w_{q}'\}$
of $E'^\perp\cap A'^\perp\cap A'$.

\item
Define the linear map $\phi_0: A^\perp\cap A\to A'^\perp\cap A'$ by
\beq\label{phi_0}
\phi_0(\w_i):=
\bca
\phi(\w_i) &\text{for \ } i=1,\cdots,\ell,\ell+1,\cdots,p
\\
\w_i' &\text{for \ } i=p+1,\cdots, q, q+1,\cdots, m
\eca
\eeq
Then \eqref{phi_0} leads to the following conclusions:
\ben
\item[(1)]
The map $\phi_0$ is an extension of $\phi|_{E\cap A^\perp\cap A}$.
\item[(2)]
By (c), the restricted map $\phi_0|_{E^\perp\cap A^\perp\cap A}$ is a bijection.

\item[(3)]
$b'(\phi(\e), \phi_0(\w_i))=b'(\phi(\e), \phi(\w_i))=b(\e, \w_i)$
for $\e\in\widetilde E$ and
$i=1,\cdots,p$.

\item[(4)]
By (c),
$b'(\phi(\e), \phi_0(\w_i))
=b'(\phi(\e), \w_i')=0=b(\e, \w_i)$
for $\e\in\widetilde E$ and $i=p+1,\cdots,q$.

\item[(5)]
By  (b),
$b'(\phi(\e), \phi_0(\w_i))
=b'(\phi(\e), \w_i')=b(\e, \w_i)$
for $\e\in\widetilde E$ and $i=q+1,\cdots,m$.

\item[(6)] By  Lemma \ref{space}(3),
$b'( \phi(\e), \phi_0(\v)) =0=b(\e, \v) $ for
$\e\in E\cap(A+A^\perp)$ and $\v\in A^\perp\cap A$.
\een

Hence $b'(\phi(\e),\phi_0(\v)) =b( \e, \v) $ for
$\e\in E$ and $\v\in A^\perp\cap A$.
We claim that $\phi_0$ is an injection, so that
it is  an isometry between totally isotropic spaces
$A^\perp\cap A$ and $A'^\perp\cap A'$. If $\phi_0(\v)=\0'$
for $\v\in A^\perp\cap A$,
then $b( \e,\v) =b'(\phi(\e),\phi_0(\v)) =0$
for all $\e\in E$. Then $\v\in E^\perp\cap A^\perp\cap A$.
Then $\v=\0$
since $\phi_0|_{E^\perp\cap A^\perp\cap A}$
is a bijection by (2).
So $\phi_0$ is an injection.
Therefore,
$\phi_0$ in \eqref{phi_0}
is   an isometry  that extends $\phi|_{E\cap A^\perp\cap A}$,
and $b'(\phi(\e),\phi_0(\v)) =b(\e, \v) $
for $\e\in E$ and $\v\in A^\perp\cap A$.

\een

By \eqref{V-+-V^perp-decomp} and \eqref{V'-+-V'^perp-decomp},
the map
\beq
\widehat \phi:=\phi_0\odot\phi_1\odot \phi_2
\eeq
is an isometry
from $A+A^\perp$ to $A'+A'^\perp$, where
\BEQ
\widehat\phi(A)
=
\phi_0(A^\perp\cap A)\odot\phi_1(V_A)=
(A'^\perp\cap A')\odot V'_{A'}=A'.
\EEQ
By \eqref{E-cap-A-A^perp-decomp}, \eqref{0.56},
and the above discussion,
$\widehat \phi$ is an extension of $\phi|_{E\cap(A+A^\perp)}$.
Clearly $b'(\phi(\e),\widehat\phi(\v)) =b(\e,\v) $
for $\e\in E$ and $\v\in [E\cap (A+A^\perp)]+(A^\perp\cap A)$.
This is \eqref{widehat-phi-sat}.

\item
Now the isometries $\widehat \phi: A+A^\perp\to A'+A'^\perp$ and
$\phi: E\to E'$ agree on
$\Dom(\widehat\phi)\cap\Dom(\phi)=E\cap (A+ A^\perp).$
However, the combination of these two isometries may not be an isometry
--- given  $\widetilde \e\in\widetilde E$
and
$\v\in (A+A^\perp)-\left([E\cap (A+A^\perp)]+(A^\perp\cap A)\right)$, it is not guaranteed that
$b(\widetilde\e,\v)=b'(\phi(\e), \widehat\phi(\v))$.
So $\widehat\phi$ should be adjusted slightly.

Choose a basis $\{\u_1,\cdots,\u_{r}\}$ of $[E\cap(A+A^\perp)]+(A^\perp\cap A)$.
Then
 extend it to a basis $\{\u_1,\cdots,\u_s\}$ of $A+A^\perp$.
Given one $\u_i$ for $i=r+1,\cdots, s$, we define
$\mu_{\u_i}:\widetilde E'\to\F$ by
$$
\mu_{\u_i}(\e'):=b'(\e',\widehat\phi(\u_i)) -b(\phi^{-1}(\e'),\u_i)
\qquad\text{for}\quad\e'\in\widetilde E'.
$$
Then $\mu_{\u_i}\in (\widetilde E')^*$.
By the surjectivity of $\tau'$ defined in \eqref{tau'},
we can select  $\v'_{\u_i}\in A'^\perp\cap A'$
such that
$$
b'(\e',\widehat\phi(\u_i)) -b(\phi^{-1}(\e'),\u_i)
=b'( \e', \v'_{\u_i})
\qquad\text{for}\quad \e' \in\widetilde E'.
$$
Let $\e':=\phi(\e)$ for $\e\in\widetilde E$. Then
\beq
b'(\phi(\e),\widehat\phi(\u_i)-\v'_{\u_i}) =b(\e,\u_i)
\qquad\text{for}\quad\e\in\widetilde E.
\eeq
Define the linear map $\widetilde\phi:E+A+A^\perp\to E'+A'+A'^\perp$  by
\beq\label{widetilde-phi}
\bca
\widetilde\phi(\u_i)
:= \widehat\phi(\u_i), &\text{for \ } i=1,\cdots,r
\\
\widetilde\phi(\u_i)
:= \widehat\phi(\u_i)-\v'_{\u_i}, &\text{for \ } i=r+1,\cdots,s
\\
\widetilde\phi(\e)
:= \phi(\e), &\text{for \ } \e\in\widetilde E
\eca
\eeq
The construction \eqref{widetilde-phi} leads to the following conclusions:
\ben
\item[(1)] $\widetilde\phi$ is an extension of $\phi$
since $\widetilde\phi|_{E}=\widehat\phi|_{E\cap(A+A^\perp)}\oplus\phi|_{\widetilde E}=\phi$.
\item[(2)] $\widetilde\phi(A)\subset \widehat\phi(A)+(A'^\perp\cap A')=A'$.
\item[(3)] $\widetilde\phi$ is metric-preserving.

\item[(4)]
We claim that $\widetilde\phi$ is a bijection. By \eqref{widetilde-E} and
\eqref{widetilde-E'},
\beq
E+A+A^\perp=(A+A^\perp)\oplus \widetilde E,
\qquad
E'+A'+A'^\perp=(A'+A'^\perp)\oplus \widetilde E',
\eeq
Choose a basis $\{\widetilde \e_1,\cdots,\widetilde \e_t\}$ of $\widetilde E$.
Then $\{\u_1,\cdots,\u_s,\widetilde\e_1,\cdots,\widetilde\e_t\}$
is a basis of $E+A+A^\perp$, and
$\{\widehat\phi(\u_1),\cdots,\widehat\phi(\u_s),\phi(\widetilde\e_1),\cdots,
\phi(\widetilde\e_t)\}$ is a basis of $E'+A'+A'^\perp$.
By \eqref{widetilde-phi}, the matrix representation of $\widetilde\phi$
with respect to the above two bases is a unit upper triangular matrix,
which is nonsingular.
So the map $\widetilde\phi$ is a bijection.
\een
\een

Therefore, $\widetilde\phi$ defined in \eqref{widetilde-phi}
is an isometry that extends $\phi$, and $\widetilde\phi(A)=A'$.
By Witt's theorem, $\widetilde\phi$ can be extended to
an isometry $\phi_{V}:V\to V'$.
Clearly the isometry $\phi_{V}$ is an extension of $\phi:E\to E'$ that sends
$A$ to $A'$. We are done.
\end{proof}

By \eqref{E-cap-A},
\eqref{E-cap-A^perp},
\eqref{E-cap-A-A^perp-decomp},
\eqref{E+A^perp-cap-A-decomp},
and
\eqref{E+A-cap-A^perp-decomp},
the following three equalities about $E, A\subset V$ are equivalent:
\bse
\bea
(E+A^\perp)\cap A &=& E\cap A,
\\
(E+A)\cap A^\perp &=& E\cap A^\perp,
\\
E\cap (A+A^\perp) &=& (E\cap A)+(E\cap A^\perp).
\eea
\ese
Theorem \ref{Witt-main} implies the following result:

\begin{cor}\label{Witt-main-cor-1}
Let $E, A\subset V$ and $E', A'\subset V'$ satisfy that
$E\approx E'$, $A\approx A'$,
$(E+A^\perp)\cap A=E\cap A$ and $(E'+A'^\perp)\cap A'=E'\cap A'$.
Then an isometry $\phi:E\to E'$ can be extended to an isometry
$\phi_{V}:V\to V'$ that also sends $A$ to $A'$  if and only if
(C3) and (C4) hold.
\end{cor}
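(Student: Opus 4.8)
The plan is to derive Corollary~\ref{Witt-main-cor-1} directly from Theorem~\ref{Witt-main} by checking that, under the stated hypotheses, all of (C1)--(C4) and the isometry condition on $\phi_A$ (equivalently $\phi_{A^\perp}$) are automatically present or follow from (C3) and (C4) alone. The first observation is that the hypothesis $(E+A^\perp)\cap A = E\cap A$ (equivalently, by the remark just preceding the corollary, $E\cap(A+A^\perp) = (E\cap A)+(E\cap A^\perp)$) together with $A\approx A'$ and $E\approx E'$ means the ambient combinatorics of $E$ relative to the self-dual flag $\{0\subset A^\perp\cap A\subset A+A^\perp\subset V\}$ degenerates: the ``cross term'' $E_{A,A^\perp}$ in the proof of Theorem~\ref{Witt-main} vanishes.

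First I would verify (C2). By Lemma~\ref{isometry} applied to the extension $\phi_V'$ produced in the discussion before the corollary, or more elementarily: $E\cap(A+A^\perp) = (E\cap A)+(E\cap A^\perp)$ by hypothesis, and since (C3) gives $\phi(E\cap A)=E'\cap A'$ and (C4) gives $\phi(E\cap A^\perp)=E'\cap A'^\perp$, linearity of $\phi$ yields $\phi(E\cap(A+A^\perp)) = (E'\cap A')+(E'\cap A'^\perp)$. The matching hypothesis $(E'+A'^\perp)\cap A'=E'\cap A'$ forces $(E'\cap A')+(E'\cap A'^\perp)=E'\cap(A'+A'^\perp)$, so (C2) holds. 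Then (C1) holds because the remark in the text already notes (C3) and (C4) imply (C1). So (C1)--(C4) are all in force.

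Next I would show $\phi_A$ is an isometry. Under the hypothesis $(E+A^\perp)\cap A = E\cap A$, the domain of $\phi_A$ simplifies to $\frac{(E\cap A)+(A^\perp\cap A)}{A^\perp\cap A}$, and likewise the codomain to $\frac{(E'\cap A')+(A'^\perp\cap A')}{A'^\perp\cap A'}$. Tracing the definition of $\phi_A$: for $\v+(A^\perp\cap A)$ with $\v\in E\cap A$, one may take $\e=\v$ itself (no $A^\perp$-correction is needed), so $\phi(\e)=\phi(\v)\in E'\cap A'$ by (C3), and hence $\phi_A(\v+(A^\perp\cap A)) = \phi(\v)+(A'^\perp\cap A')$. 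Thus $\phi_A$ is just the map induced by $\phi|_{E\cap A}$ on the quotients by the radicals. Since $\phi$ is an isometry, $b(\v_1,\v_2)=b'(\phi(\v_1),\phi(\v_2))$ for $\v_1,\v_2\in E\cap A$, and the induced metrics $\bar b,\bar b'$ on the quotients are defined precisely by \eqref{bar-b}, so $\phi_A$ preserves $\bar b$. Hence $\phi_A$ is an isometry, and by the remark following Theorem~\ref{Witt-main} so is $\phi_{A^\perp}$. Applying Theorem~\ref{Witt-main} now gives the desired extension $\phi_V$, and conversely any such $\phi_V$ forces (C3) and (C4) by Lemma~\ref{isometry}, which is the trivial direction.

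The main obstacle I anticipate is not any single step but making sure the domain/codomain simplifications of $\phi_A$ are stated cleanly: one must invoke the equivalence of the three displayed conditions just before the corollary to pass between the hypothesis as stated ($(E+A^\perp)\cap A=E\cap A$) and the more usable forms, and one must be careful that the quotient $\frac{(E+A^\perp)\cap A}{A^\perp\cap A}$ genuinely collapses to $\frac{(E\cap A)+(A^\perp\cap A)}{A^\perp\cap A}$ rather than something merely isomorphic to it. Once that bookkeeping is in place, the verification that $\phi_A$ is metric-preserving is immediate from the definition of $\bar b$ and the fact that $\phi$ is an isometry, so the corollary follows with essentially no computation beyond Theorem~\ref{Witt-main}.
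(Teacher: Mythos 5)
Your proposal is correct and follows essentially the same route as the paper: verify (C2) from the hypothesis via the equivalence $E\cap(A+A^\perp)=(E\cap A)+(E\cap A^\perp)$ together with (C3) and (C4), observe that under $(E+A^\perp)\cap A=E\cap A$ the map $\phi_A$ reduces to the quotient map induced by $\phi|_{E\cap A}$ and is therefore an isometry, and then invoke Theorem~\ref{Witt-main}. The extra bookkeeping you supply (e.g.\ that $A^\perp\cap A\subset E\cap A$ under the hypothesis, so the domain of $\phi_A$ genuinely collapses) is a correct elaboration of what the paper states more tersely.
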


\begin{proof}
It suffices to prove the sufficient part. Suppose that
(C3): $E\cap A\overset{\phi}\approx E'\cap A'$ and
(C4): $E\cap A^\perp\overset{\phi}\approx E'\cap A'^\perp$ hold.
By the sufficient conditions in the corollary and the preceding discussion,
\BEA
E\cap(A+A^\perp)
&=& (E\cap A)+(E\cap A^\perp)
\\
&\overset{\phi}\approx &
(E'\cap A')+(E'\cap A'^\perp)
=E'\cap (A'+A'^\perp).
\EEA
So (C2) holds. Moreover, the induced linear bijection $\phi_{A}$ in
\eqref{phi_A} is given by
$$
\phi_{A}\left(\v+(A^\perp\cap A)\right)=\phi(\v)+(A^\perp\cap A),
\qquad\text{for}\quad \v\in E\cap A,
$$
which is   an isometry.
So  Corollary \ref{Witt-main-cor-1} follows by Theorem \ref{Witt-main}.
\end{proof}

\begin{rem}\label{counter-example}
If  an isometry $\phi_{V}:V\to V'$  sends $E$ to $E'$ and $A$ to $A'$
respectively,
then
\beq
f(E,A)\approx \phi_{V}(f(E, A))=f(\phi_{V}(E),\phi_{V}(A))=f(E',A')
\eeq
for every meaningful expression $f$
that consists of brackets, $+$, $\cap$, and $\perp$ (taking orthogonal complement).
Nevertheless,  the converse is not true!
In the following counterexample,  $E, A\subset V$ and $E', A'\subset V'$ satisfy that
 $f(E,A)\approx f(E',A')$
for every   $f$ defined above.
However, there is no isometry $\phi_{V}:V\to V'$
that sends $E$ to $E'$ and $A$ to $A'$ respectively.

Suppose that $\text{char}(\F)=0$. Let $V=V'=\spn\{\v_1,\v_2\}$ be the
nonsingular symmetric metric space
determined by $b(\v_i,\v_j) =\delta_{ij}$ for $i,j=1,2$.
 Denote
\[
A=A':=\F\v_1,\qquad
E:=\F(\v_1+2\v_2),\qquad
E':=\F(\v_1+3\v_2).
\]
Then
\[
A^\perp=A'^\perp=\F\v_2,\qquad
E^\perp=\F(2\v_1-\v_2),\qquad
E'^\perp=\F(3\v_1-\v_2).
\]
Denote  a set ${\mc S}$ (resp. ${\mc S}'$) of subspaces of $V$ (resp. $V'$) as follow:
\[
{\mc S}:=\{\{\0\}, E, E^\perp, A, A^\perp, V\},
\qquad
{\mc S}':=\{\{\0'\}, E', E'^\perp, A', A'^\perp, V'\}.
\]
It is easy to check that ${\mc S}$ (resp. ${\mc S}'$) is closed under
the operations $+$, $\cap$, and $\perp$.
Moreover, define a bijection $\varpi:{\mc S}\to {\mc S}'$ that
sends $\{\0\}$ to $\{\0'\}$,
$E$ to $E'$,
$E^\perp$ to $E'^\perp$,
$A$ to $A'$,
$A^\perp$ to $A'^\perp$,
$V$ to $V'$, respectively.
Then $X\approx\varpi(X)$ for every $X\in {\mc S}$, and
 $\varpi$ commutes with $+$, $\cap$, and $\perp$ operations.
For every meaningful expression $f(X,Y)$ of subspaces $X$ and $Y$ in a metric space that consists of
brackets, $+$, $\cap$, and $\perp$, we have $f(E,A)\in {\mc S}$ and
\[
f(E,A)\approx \varpi(f(E,A))=f(\varpi(E), \varpi(A))=f(E',A').
\]
However, there is no isometry $\phi_{V}:V\to V'$ that sends
$A$ to $A'$ and $E$ to $E'$ simultaneously.
\end{rem}

\begin{rem}
The equality (C1) is implied by (C3) and (C4). However,
if the specific isometry $\phi$ is removed from (C1)$\sim$(C4),
then the resulting relations are independent of each other.

To take a glance, let $V=V'=\spn\{\e_1, \widetilde\e_1, \e_2, \widetilde\e_2, \e_3, \widetilde\e_3\}$
be the nonsingular symmetric metric space defined by $b=b'$ and
\[
b( \e_i, \e_j) =0,\qquad
b( \widetilde\e_i, \widetilde\e_j) =0,\qquad
b( \e_i, \widetilde\e_j) =\delta_{ij},\qquad
\text{for \ } i, j=1, 2, 3.
\]
Denote
\[
E=E' := \spn\{\e_1, \e_2+\e_3\},
\quad
A := \spn\{\e_1, \e_2, \widetilde\e_2\},
\quad
A' := \spn\{\e_1, \widetilde \e_1, \widetilde \e_2-\widetilde\e_3\}.
\]
Then
\[
A^\perp = \spn\{\e_1, \e_3, \widetilde\e_3\},\qquad
A'^\perp = \spn\{\e_2+\e_3, \widetilde\e_2, \widetilde\e_3\}.
\]
Evidently,
$E\approx E'$, $A\approx A'$, $E\cap (A+A^\perp)\approx E'\cap(A'+A'^\perp)$,
$E\cap A\approx E'\cap A'$, $E\cap A^\perp\approx E'\cap A'^\perp$,
but $E\cap A^\perp\cap A$ and $E'\cap A'^\perp\cap A'$ are not isometric.
\end{rem}

\begin{rem}\label{phi_A-phi-A^perp}
Suppose $E, A\subset V$, $E', A'\subset V'$, and $\phi:E\to E'$ satisfy
(C2)$\sim$(C4), so that $\phi_{A}$ and $\phi_{A^\perp}$
are well-defined.
In the metric space $(\frac{A+A^\perp}{A^\perp\cap A},\bar b)$, the intersection of
$\Dom(\phi_{A})=\frac{(E+A^\perp)\cap A}{A^\perp\cap A}$ and
$\Dom(\phi_{A^\perp})=\frac{(E+A)\cap A^\perp}{A^\perp\cap A}$ is trivial,
and $\Dom(\phi_{A})\perp\Dom(\phi_{A^\perp})$.
Likewise for $\Ima(\phi_{A})$ and $\Ima(\phi_{A^\perp})$
in   $(\frac{V'}{A'^\perp\cap A'},\bar b')$.
So we can define the linear bijection
\beq
\phi_A\odot\phi_{A^\perp}:
\frac{[(E+A^\perp)\cap A]+[(E+A)\cap A^\perp]}{A^\perp\cap A}
\to \frac{[(E'+A'^\perp)\cap A']+[(E'+A')\cap A'^\perp]}{A'^\perp\cap A'}.
\eeq
Then
$\phi_A\odot\phi_{A^\perp}$ is an extension of the following isometry
induced by $\phi$:
\begin{equation}
\phi_{A+A^\perp}:\frac{[E\cap (A+A^\perp)]+(A^\perp\cap A)}{A^\perp\cap A }
\to
\frac{[E'\cap (A'+A'^\perp)]+(A'^\perp\cap A') }{A'^\perp\cap A' },
\end{equation}
where
\ $\phi_{A+A^\perp}\left(\e+(A^\perp\cap A)\right):=\phi(\e)+(A'^\perp\cap A')$ \
for \ $\e\in E\cap (A+A^\perp)$.
\end{rem}

\section{More about isometries over flags}\label{sect:flag}

We continue the study on isometry of flags in
 Section \ref{sect:self-dual-I}.
First we give a criteria about the isometry of two generic flags.
Then we determine when there exists an isometry that
simultaneously sends a subspace to another, and
a self-dual flag to another, respectively.
The later object is analogous to Theorem \ref{thm:iso-Witt-ext}
except that here   no isometry $\phi$ is provided.

Let $V\approx V'$ be nonsingular isometric metric spaces.
The following theorem is an extension of Lemma \ref{E-V-*}.

\begin{thm}\label{Witt-flag}
Let ${\mc V}=\{V_i\}_{i=0,\cdots,k}$
and ${\mc V}'=\{V_i'\}_{i=0,\cdots,k}$ be generic flags of
$V$ and $V'$ respectively.
Then ${\mc V}\approx {\mc V}'$ if and only if
$V_i\approx V_i'$ for $i=0,\cdots,k$ and
$\dim (V_i^\perp\cap V_j)=\dim (V_i'^\perp\cap V_j')$ for
$0<j<i<k$.
\end{thm}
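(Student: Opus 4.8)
**The plan is to prove Theorem \ref{Witt-flag} by induction on $k$, peeling off the top of the flag and applying Theorem \ref{Witt-main} to carry the isometry across.**

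The necessity direction is immediate: if $\phi_V({\mc V})={\mc V}'$ then $\phi_V(V_i)=V_i'$ gives $V_i\approx V_i'$ by Lemma \ref{isometry}, and $\phi_V(V_i^\perp\cap V_j)=V_i'^\perp\cap V_j'$ gives the dimension equalities, again by Lemma \ref{isometry}(1)--(2). So the work is entirely in sufficiency. For the base cases $k=0,1$ there is nothing to prove beyond $V\approx V'$ (and $k=2$ follows from $V_1\approx V_1'$ plus Witt's theorem applied to a linear bijection $V_1\to V_1'$). For the inductive step, assume the theorem holds for flags of length $k-1$, and suppose ${\mc V}=\{V_i\}_{i=0,\cdots,k}$, ${\mc V}'=\{V_i'\}_{i=0,\cdots,k}$ satisfy the stated conditions. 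The idea is to first build an isometry matching the truncated flags $\{V_0\subset\cdots\subset V_{k-1}\}$ and $\{V_0'\subset\cdots\subset V_{k-1}'\}$ inside the \emph{ambient} spaces $V$ and $V'$, and then correct it, using Theorem \ref{Witt-main}, so that it additionally sends $V_{k-1}$ to $V_{k-1}'$ — but this is circular, so instead I would proceed the other way around.

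\textbf{The real strategy: handle the largest proper subspace first via Theorem \ref{Witt-main}, then induct downward.} Apply the inductive hypothesis (or, more directly, induction) not to the flag but to the pair $(V_{k-1}, \text{rest})$. Concretely: first use Lemma \ref{E-V-*} or Corollary \ref{E-A-perp-*} to get \emph{some} isometry $\phi_V^{(1)}:V\to V'$ with $\phi_V^{(1)}(V_{k-1})=V_{k-1}'$; this exists because $V_{k-1}\approx V_{k-1}'$ and we may take $E=\{0\}$. Now restrict to the metric subspace $V_{k-1}$ and its primed image: we want an isometry $V_{k-1}\to V_{k-1}'$ extending the built-in data and sending each $V_i$ to $V_i'$ for $i<k-1$. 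This is a flag-isometry problem \emph{inside} the possibly-singular space $V_{k-1}$, so one needs the singular version of Witt (Lemma \ref{thm:Witt-deg}) as the engine rather than Witt's theorem proper. The conditions $\dim(V_i^\perp\cap V_j)=\dim(V_i'^\perp\cap V_j')$ for $0<j<i<k$ are exactly what is needed to carry out the recursion inside $V_{k-1}$ — note that the orthogonal complements $V_i^\perp$ there may be taken either in $V$ or in $V_{k-1}$, and Lemma \ref{space}(6)--(7) let one pass between the two. Once an isometry $\psi:V_{k-1}\to V_{k-1}'$ with $\psi({\mc V}\cap V_{k-1})={\mc V}'\cap V_{k-1}'$ is in hand, extend $\psi$ to $\phi_V:V\to V'$ by Witt's theorem (noting $V_{k-1}\cap V_{k-1}^\perp$ maps correctly, so Lemma \ref{thm:Witt-deg} applies if $V_{k-1}$ is singular). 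Then $\phi_V(V_{k-1})=V_{k-1}'$ automatically, and $\phi_V(V_i)=V_i'$ for all $i$, i.e.\ $\phi_V({\mc V})={\mc V}'$.

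\textbf{The main obstacle I anticipate} is the bookkeeping of \emph{which} dimension hypotheses survive passage to the subspace $V_{k-1}$: inside $V_{k-1}$ the relevant orthogonal complements are $V_i^{\perp}\cap V_{k-1}$, and one must check $\dim\bigl((V_i^\perp\cap V_{k-1})\cap V_j\bigr)=\dim\bigl((V_i'^\perp\cap V_{k-1}')\cap V_j'\bigr)$ for the inductive hypothesis to apply to the shortened flag. Since $V_j\subset V_{k-1}$ for $j\le k-1$, Lemma \ref{space}(6) gives $(V_i^\perp\cap V_{k-1})\cap V_j=V_i^\perp\cap V_j$, so the hypotheses transfer cleanly — but one must also confirm $\dim(V_i\cap V_{k-1})=\dim V_i$ (trivial) and the radical-matching condition $\dim(V_j\cap V_{k-1}^\perp)=\dim(V_j'\cap V_{k-1}'^\perp)$, which is the $i=k-1$ instance of the given hypothesis (reading $V_{k-1}^\perp\cap V_j$), plus the fact $V_{k-1}\cap V_{k-1}^\perp\approx V_{k-1}'\cap V_{k-1}'^\perp$ since both are totally isotropic of the same dimension (again from the hypothesis with $i=j$... careful here, since $j<i$ is required, so this needs $V_{k-1}\approx V_{k-1}'$ instead, giving equal radical dimensions). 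So the delicate point is organizing the induction so that at each stage the ambient nonsingularity is replaced by the weaker "radicals match" hypothesis and Lemma \ref{thm:Witt-deg} is invoked in place of Witt's theorem; once that framework is set up, each step is a routine application of the lemmas in Section \ref{Preliminary}.
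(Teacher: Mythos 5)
Your proposal has a genuine gap: the top--down recursion into $V_{k-1}$ does not close. To climb back out of the subspace at each stage you must invoke Lemma \ref{thm:Witt-deg}, and that lemma requires the \emph{set} equality $\psi(V_{j}\cap W^\perp)=V_{j}'\cap W'^\perp$ (where $W$ is the current ambient space), not merely equality of dimensions. At the first level this is vacuous because $V$ is nonsingular, but already at the second level you need the isometry $\psi_2:V_{k-2}\to V_{k-2}'$ produced by the inner induction to satisfy $\psi_2(V_{k-2}\cap V_{k-1}^\perp)=V_{k-2}'\cap V_{k-1}'^\perp$. The subspace $V_{k-2}\cap V_{k-1}^\perp$ is not expressible from the truncated flag $\{V_0\subset\cdots\subset V_{k-2}\}$ and orthogonality \emph{within} $V_{k-2}$; it depends on $V_{k-1}$, which lies outside the inner ambient space. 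So the inductive hypothesis, even in a generalized singular form, gives you no control over it, and the hypothesis $\dim(V_{k-1}^\perp\cap V_{k-2})=\dim(V_{k-1}'^\perp\cap V_{k-2}')$ only matches dimensions, which is not enough for Lemma \ref{thm:Witt-deg}. The point you flag as ``routine bookkeeping'' is in fact the crux, and as formulated the recursion fails there. (A secondary issue: you would also need to prove the theorem for singular ambient spaces, with the condition range enlarged to include $i=k$, which is a strictly stronger statement than the one being proved.)

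The paper's proof avoids this trap by going bottom--up rather than top--down. It first forms the totally isotropic subspace $T:=\sum_{i=0}^{k}(V_i^\perp\cap V_i)$ and constructs a linear bijection $\phi_0:T\to T'$ respecting the entire lattice of subspaces $V_i^\perp\cap V_j$ ($0\le j\le i\le k$); since $T$ and $T'$ are totally isotropic, $\phi_0$ is automatically an isometry. Because $\phi_0$ already encodes all the cross-orthogonality data of the flag, one can then adjoin $V_1, V_2,\dots,V_k$ one at a time, verifying at each step the hypotheses (C3), (C4) and $(E+A^\perp)\cap A=E\cap A$ of Corollary \ref{Witt-main-cor-1} with $E=T+V_j$ and $A=V_{j+1}$. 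If you want to salvage your approach, you would have to strengthen the inductive statement so that the inner isometry is required to match a prescribed family of subspaces of the radical --- which is essentially reinventing the lattice $L({\mc V})$ and the space $T$.
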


\begin{proof}
We only need to prove the sufficient part.

\beq
\begin{tabular}{c}
\scriptsize{
\xymatrix @dr @R=4mm @C=4mm {
V_{0}^\perp\cap V_{k}\ar@{-}[d]\ar@{-}[r]
    &V_{0}^\perp\cap V_{k-1}\ar@{-}[d]\ar@{.}[r]
        &\ar@{.}[d]\ar@{.}[r]
            &V_{0}^\perp\cap V_{1}\ar@{-}[d]\ar@{-}[r]
                &V_{0}^\perp\cap V_{0}\ar@{-}[d]
\\
V_{1}^\perp\cap V_{k}\ar@{.}[d]\ar@{-}[r]
    &V_{1}^\perp\cap V_{k-1}\ar@{.}[d]\ar@{.}[r]
        &\ar@{.}[d]\ar@{.}[r]
            &V_{1}^\perp\cap V_{1}\ar@{.}[d]\ar@{-}[r]
                &V_{1}^\perp\cap V_{0}\ar@{.}[d]
\\
\ar@{.}[d]\ar@{.}[r]
    &\ar@{.}[d]\ar@{.}[r]
        &\ar@{.}[d]\ar@{.}[r]
            &\ar@{.}[d]\ar@{.}[r]
                &\ar@{.}[d]
\\
V_{k-1}^\perp\cap V_{k}\ar@{-}[d]\ar@{-}[r]
    &V_{k-1}^\perp\cap V_{k-1}\ar@{-}[d]\ar@{.}[r]
        &\ar@{.}[d]\ar@{.}[r]
            &V_{k-1}^\perp\cap V_{1}\ar@{-}[d]\ar@{-}[r]
                &V_{k-1}^\perp\cap V_{0}\ar@{-}[d]
\\
V_{k}^\perp\cap V_{k}\ar@{-}[r]
    &V_{k}^\perp\cap V_{k-1}\ar@{.}[r]
        &\ar@{.}[r]
            &V_{k}^\perp\cap V_{1}\ar@{-}[r]
                &V_{k}^\perp\cap V_{0}
}
}
\\
\text{\footnotesize Lattice $L({\mc V})$}
\end{tabular}
\eeq
The figure above displays a lattice $L({\mc V})$ of subspaces in $V$, where
the edges represent inclusion relations.
The entries of $L({\mc V})$ are $V_i^\perp\cap V_j$ where $(i,j)$ is in the
index set
\beq
I:=\{0,\cdots,k\}\times\{0,\cdots,k\}.
\eeq
Let us call $V_i^\perp\cap V_j$ the $(i,j)$-entry of $L({\mc V})$.
Of course, $V_k^\perp\cap V_j=V_i^\perp\cap V_0=\{\0\}$,
$V_0^\perp\cap V_j=V_j$ and $V_i^\perp\cap V_k=V_i^\perp$.
Likewise, we denote a lattice $L({\mc V}')$ with respect to ${\mc V}'$.

The subspaces in the ``lower half part'' of $L({\mc V})$ (including those in the
middle) are $V_i^\perp\cap V_j$ for $0\le j\le i\le k$,
which are totally isotropic.
Moreover, denote
\beq
T:=\sum_{i=0}^{k} (V_i^\perp\cap V_i),\qquad
T':=\sum_{i=0}^{k} (V_i'^\perp\cap V_i').
\eeq
Then both $T$ and $T'$ are totally isotropic.
By the sufficient conditions in the theorem, $\dim(V_i^\perp\cap V_j)=\dim(V_i'^\perp\cap V_j')$
for $0\le j<i\le k$,
and $\dim(V_i^\perp\cap V_i)=\dim(V_i'^\perp\cap V_i')$
since $V_i\approx V_i'$.
Using basis extension and induction over the lattice entries
of $L({\mc V})$ and $L({\mc V}')$,
we can  explicitly construct a linear bijection
$\phi_0:T\to T'$ such that $\phi_0(V_i^\perp\cap V_j)=V_i'^\perp\cap V_j'$
for $0\le j\le i\le k$. Then $\phi_0$ is an isometry
of totally isotropic subspaces.

Given $j\in\{0,\cdots,k-1\}$, suppose
$\phi_j:T+V_j\to T'+V_j'$ is an isometry such that
$\phi_j|_{T}=\phi_0$ and $\phi_j(V_q)=V_q$ for $q=0,\cdots,j$.
We show that $\phi_j$ can be extended to an isometry
$\phi_{j+1}:T+V_{j+1}\to T'+V_{j+1}'$ such that
 $\phi_{j+1}(V_{j+1})=V_{j+1}'$. This is done by
applying Corollary \ref{Witt-main-cor-1} on $(V,b)\approx (V',b')$ with
\beq
E:=T+V_j,\quad
A:=V_{j+1},\quad E':=T'+V_j',\quad A':=V_{j+1}',\quad\text{and}\quad
\phi:=\phi_j.
\eeq
Let us  verify the sufficient conditions in Corollary \ref{Witt-main-cor-1}:
\ben
\item[(C3):]
\BEA
E\cap A &=& \left [\sum_{i=0}^k(V_i^\perp\cap V_i)+V_j\right]\cap V_{j+1}
= \left [\sum_{i=j+1}^k(V_i^\perp\cap V_i)+V_j\right]\cap V_{j+1}
\\
&=& \left(\left [\sum_{i=j+1}^k(V_i^\perp\cap V_i)\right]\cap V_{j+1}\right)+V_j
\qquad \text{by Lemma \ref{space}(5)}
\\
&=& \left(\left [\sum_{i=j+1}^k(V_i^\perp\cap V_i)\right]\cap V_{j+1}^\perp\cap V_{j+1}\right)+V_j
\qquad \text{by Lemma \ref{space}(6)}
\\
&=& (V_{j+1}^\perp\cap V_{j+1})+V_j.
\EEA
Likewise, $E'\cap A'=(V_{j+1}'^\perp\cap V_{j+1}')+V_j'$. Then (C3) holds since
$$
\phi\left((V_{j+1}^\perp\cap V_{j+1})+V_j\right)=\phi_0(V_{j+1}^\perp\cap V_{j+1})
+\phi_j(V_j)=(V_{j+1}'^\perp\cap V_{j+1}')+V_j'.
$$

\item[(C4):]
\BEA
E\cap A^\perp &=&
\left [\sum_{i=0}^k(V_i^\perp\cap V_i)+V_j\right]\cap V_{j+1}^\perp
= \left [\sum_{i=j+1}^k(V_i^\perp\cap V_i)+V_j\right]\cap V_{j+1}^\perp
\\
&=&
\left [\sum_{i=j+1}^k(V_i^\perp\cap V_i)\right]+\left(V_{j+1}^\perp\cap V_j\right)
\qquad\text{by Lemma \ref{space}(5)}
\\
&=&
\sum_{i=j+1}^k(V_i^\perp\cap V_i).
\EEA
Then $E\cap A^\perp\overset{\phi}{\approx}E'\cap A'^\perp$
by similar computation on $E'\cap A'^\perp$.
Thus (C4) holds.
\een

Besides these,
\BEA
(E+A^\perp)\cap A &=&
\left [\sum_{i=0}^k(V_i^\perp\cap V_i)+V_j+V_{j+1}^\perp\right]\cap V_{j+1}
\\
&=& \left [\sum_{i=j+1}^k(V_i^\perp\cap V_i)+V_j+V_{j+1}^\perp\right]\cap V_{j+1}
\\
&=&  \left (V_j+V_{j+1}^\perp\right )\cap V_{j+1}
\\
&=& V_j+(V_{j+1}^\perp\cap V_{j+1})\qquad\text{by Lemma \ref{space}(5)}
\\
&=& E\cap A.
\EEA
Similarly, $(E'+A'^\perp)\cap A'=E'\cap A'$.
By Corollary \ref{Witt-main-cor-1},
the isometry $\phi=\phi_j$ can be extended to an isometry
$\widetilde\phi_j:V\to V'$ that sends $A=V_{j+1}$ to $A'=V_{j+1}'$.
Then the isometry $\phi_{j+1}:=\widetilde\phi_j|_{T+V_{j+1}}$
is an extension of $\phi_j$ that sends $V_{j+1}$ to $V_{j+1}'$.

Finally,
repeating the above process,
we obtain a series of extension of isometries via
$\phi_0\to \phi_1\to \cdots\to\phi_k$.
Notice that $T+V_k=V$ and $T'+V_k'=V'$.
The isometry $\phi_k:V\to V'$ satisfies that
$\phi_{k}(V_i)=V_i'$ for $i=0,\cdots,k$.
So ${\mc V}\approx {\mc V}'$.
\end{proof}

We return to the simultaneous isometry of (subspace, self-dual flag) pairs.
As before, denote $E_i:=E\cap V_i$ and
$E_i':=E'\cap V_i'$.
Recall that two self-dual flags
${\mc V} =\{V_i\}_{i=0,\cdots,k}$ of $V$ and
${\mc V}' =\{V_i'\}_{i=0,\cdots,k}$ of $V'$
are isometric if and only if
$\dim V_i=\dim V_i'$ for $i=1,\cdots,\lfloor \frac k2\rfloor$.
The following  result is analogous to Theorem \ref{thm:iso-Witt-ext}.

\begin{thm}\label{thm:iso-Witt-like}
Suppose $E\subset V$ and $E'\subset V'$ where $V\approx V'$.
Let ${\mc V}:=\{V_i\}_{i=0,\cdots,k}$ and
${\mc V}':=\{V_i'\}_{i=0,\cdots,k}$ be isometric self-dual flags
 of $V$ and $V'$ respectively.
Then there exists an isometry $\phi_{V}:V\to V'$ that sends
$E$ to $E'$ and ${\mc V}$ to ${\mc V}'$ simultaneously
if and only if $E_i\approx  E_i'$ for $i=1,\cdots,k$, and
\ $\dim (E_i^\perp\cap E_j)= \dim (E_i'^\perp\cap E_j')$ \ for \
$k-i< j< i\le k$.
\end{thm}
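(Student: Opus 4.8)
The necessity direction is immediate from Remark~\ref{counter-example}: if some isometry $\phi_V$ sends $E$ to $E'$ and $\mathcal V$ to $\mathcal V'$, then it sends every expression built from $E, V_1,\dots,V_k$ using $+$, $\cap$, $\perp$ to the corresponding expression on the primed side, so in particular $E_i=E\cap V_i\approx E_i'$ and $E_i^\perp\cap E_j\approx E_i'^\perp\cap E_j'$ for all relevant $i,j$. For the sufficiency direction I would mimic the strategy used to deduce Theorem~\ref{thm:iso-Witt-ext} from Lemma~\ref{thm:iso-Witt-gen}: build the isometry one ``layer'' of the self-dual flag at a time, starting from the smallest totally isotropic piece and working outward, at each stage invoking the no-$\phi$-provided version of the key extension lemma. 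Since here no initial isometry on $E$ is given, the right tool is a ``starred'' analogue of Lemma~\ref{thm:iso-Witt-gen} --- i.e.\ the $k=3$ case of a version of Theorem~\ref{thm:iso-Witt-like} --- which should itself be reducible to Lemma~\ref{E-V-*} (or Corollary~\ref{E-A-perp-*}) by the same totally-isotropic-Riesz-representation construction that proves Lemma~\ref{thm:iso-Witt-gen}.

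Concretely, the first step is to establish the $k\le 3$ base case: given isometric self-dual flags $\{0\subset V_1\subset V_2\subset V\}$ and $\{0'\subset V_1'\subset V_2'\subset V'\}$ with $V_1,V_1'$ totally isotropic, and $E\subset V$, $E'\subset V'$ with $E_i\approx E_i'$ ($i=1,2,3$) plus the dimension conditions $\dim(E_i^\perp\cap E_j)=\dim(E_i'^\perp\cap E_j')$, produce an isometry $\phi_V:V\to V'$ with $\phi_V(E)=E'$ and $\phi_V(V_i)=V_i'$. Here one first builds a linear bijection $E\to E'$ respecting the induced filtration and the radicals --- using dimension-counting exactly as in the proof of Lemma~\ref{E-V-*} --- then extends it over $V_1$ by the Riesz-representation argument of Lemma~\ref{thm:iso-Witt-gen} (constructing $\phi_\beta$ on a complement $\overline V$ of $V_1\cap E^\perp$ in $V_1$ via $b'(\phi_0(\mathbf e),\phi_\beta(\mathbf v))=b(\mathbf e,\mathbf v)$), and finally extends to all of $V$ by Witt's theorem. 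The second step is the inductive pass: as in the proof of Theorem~\ref{thm:iso-Witt-ext}, one peels off $V_1, V_2,\dots, V_{\lfloor k/2\rfloor}$ in turn, at stage $m$ applying the base case to $E+V_{m-1}$ (in place of $E$) and to the length-$3$ self-dual flag $\{0\subset V_m\subset V_{k-m}\subset V\}$, using Lemma~\ref{space}(5) to check that $(E+V_{m-1})\cap V_i = E_i+V_{m-1}$ gets sent correctly and that the requisite dimension equalities are inherited. Self-duality then gives $\phi_V(V_{k-i})=\phi_V(V_i^\perp)=\phi_V(V_i)^\perp=V_{k-i}'$ automatically.

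The main obstacle I expect is verifying that the hypotheses of the base case are genuinely \emph{inherited} at each inductive step --- i.e.\ that once we have $\phi_{m-1}:E+V_{m-1}\to E'+V_{m-1}'$ an isometry with $\phi_{m-1}(V_i)=V_i'$ for $i<m$, the pair $(E+V_{m-1},\, \{0\subset V_m\subset V_{k-m}\subset V\})$ still satisfies the dimension conditions $\dim\big((E+V_{m-1})\cap V_i\big)^\perp\cap\big((E+V_{m-1})\cap V_j\big)$ $=$ (primed version). This requires rewriting $(E+V_{m-1})\cap V_i$ via Lemma~\ref{space}(5),(6),(7) and then matching perpendicular-intersection dimensions using Lemma~\ref{space}(8) and Lemma~\ref{isometry} applied to the isometry $\phi_{m-1}$ already in hand --- the bookkeeping is where the real work lies. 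A secondary technical point is confirming the exact index range $k-i<j<i\le k$ in the theorem is the minimal sufficient set: the ``lower triangle'' conditions $j<i$ with $j\le k-i$ are automatically forced because $V_j\subset V_{k-i}=V_i^\perp$ makes $E_i^\perp\cap E_j$ expressible through already-controlled data (it contains $E_j$), so no extra hypothesis is needed there; this should be spelled out explicitly so the statement's economy is justified. Once the base case and the inheritance lemma are in place, the induction itself is routine and parallels the proof of Theorem~\ref{thm:iso-Witt-ext} line for line.
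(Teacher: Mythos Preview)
Your inductive scheme has a genuine gap. At stage $m\ge 2$ you propose to apply your starred (no-$\phi$-given) base case to $\tilde E:=E+V_{m-1}$ and the short self-dual flag $\{0\subset V_m\subset V_{k-m}\subset V\}$. But that base case hands you a \emph{fresh} isometry $\psi_m:V\to V'$, guaranteed only to send $\tilde E\to\tilde E'$ and $V_m\to V_m'$; nothing forces $\psi_m(V_i)=V_i'$ for $i<m$, so the work of earlier stages is lost and the induction never accumulates into a single isometry respecting the whole flag. If instead you meant to use Lemma~\ref{thm:iso-Witt-gen} (the with-$\phi$ version) at stage $m$, feeding in the isometry $\phi_{m-1}|_{E+V_{m-1}}$ already in hand --- which is what the proof of Theorem~\ref{thm:iso-Witt-ext} actually does --- then you need $\phi_{m-1}\big((E+V_{m-1})\cap V_m\big)=(E'+V_{m-1}')\cap V_m'$, i.e.\ $\phi_{m-1}(E_m+V_{m-1})=E_m'+V_{m-1}'$, and for this it is essential that $\phi_{m-1}(E_m)=E_m'$. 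Your stage-$1$ base case, however, only pins down $\phi_1(E_1)=E_1'$ and $\phi_1(E_{k-1})=E_{k-1}'$; it gives no control over $\phi_1(E_m)$ for $2\le m\le k-2$. The induction in Theorem~\ref{thm:iso-Witt-ext} succeeds precisely because its hypothesis supplies a single $\phi:E\to E'$ with $\phi(E_i)=E_i'$ for \emph{every} $i$ from the outset --- that global control over the full filtration on $E$ is exactly what your setup lacks.

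The paper's proof sidesteps this entirely and is essentially two lines. Your observation about the index range is correct: for $j\le k-i$ one has $V_j\subset V_{k-i}=V_i^\perp$, hence $E_j\subset E_i^\perp$ and $E_i^\perp\cap E_j=E_j$, so the dimension equality is automatic there; combined with the hypotheses this gives $\dim(E_i^\perp\cap E_j)=\dim(E_i'^\perp\cap E_j')$ for all $0<j<i\le k$. Now feed the (generic, not self-dual) flags $\mathcal{E}:=\{E_0\subset E_1\subset\cdots\subset E_k=E\subset V\}$ and $\mathcal{E}':=\{E_0'\subset\cdots\subset E_k'=E'\subset V'\}$ into Theorem~\ref{Witt-flag}: its conclusion yields an isometry $\phi:V\to V'$ with $\phi(E_i)=E_i'$ for every $i$ simultaneously. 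The restriction $\phi|_E:E\to E'$ then satisfies the hypotheses of Theorem~\ref{thm:iso-Witt-ext} on the nose, and that theorem furnishes the desired $\phi_V$ sending $\mathcal{V}$ to $\mathcal{V}'$. The heavy lifting was already done in Theorems~\ref{Witt-flag} and~\ref{thm:iso-Witt-ext}; the present theorem is just their composition.
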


\begin{proof}
It suffices to prove the sufficient part.
Note that $E_i^\perp\cap E_j=E_j$ whenever $i+j\le k$.
We have $\dim(E_i^\perp\cap E_j)=\dim (E_i'^\perp\cap E_j')$
for $0<j<i\le k$. Consider the flags
$${\mc E}:=\{E_0=\{\0\}\subset E_1\subset\cdots\subset E_k=E\subset V\}
\qquad\text{of}\qquad V$$
and
$${\mc E}':=\{E_0'=\{\0'\}\subset E_1'\subset\cdots\subset E_k'=E'\subset V'\}
\qquad\text{of}\qquad V'.$$
By Theorem \ref{Witt-flag},
there is an isometry $\phi:V\to V'$ that sends
${\mc E}$ to ${\mc E}'$, that is,
$\phi(E_i)=E_i'$ for $i=0,\cdots,k$.
By Theorem \ref{thm:iso-Witt-ext},
the isometry $\phi|_E:E\to E'$ can be extended to an isometry
$\phi_{V}:V\to V'$ that sends ${\mc V}$ to ${\mc V}'$.
This completes the proof.
\end{proof}

\begin{cor}\label{E-A-isotropic}
Suppose $V\approx V'$.
Suppose $E, A\subset V$ and $E', A'\subset V'$
where $A\approx A'$ are totally isotropic and $E\approx E'$.
Then there exists an isometry $\phi_V:V\to V'$ that sends
$E$ to $E'$ and $A$ to $A'$ simultaneously
if and only if
\bse
\bea
E\cap A^\perp &\approx& E'\cap A'^\perp,\\
\dim (E\cap A) &=& \dim (E'\cap A'),\\
\dim(E^\perp\cap E\cap A) &=& \dim (E'^\perp\cap E'\cap A'),\\
\dim(E^\perp\cap E\cap A^\perp) &=& \dim (E'^\perp\cap E'\cap A'^\perp).
\eea
\ese
\end{cor}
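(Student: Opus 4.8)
The plan is to deduce the corollary from Theorem \ref{thm:iso-Witt-like} by packaging the totally isotropic subspace $A$ into a short self-dual flag. Since $A$ is totally isotropic, $A\subset A^\perp$, and since $V$ is nonsingular, $A^{\perp\perp}=A$; hence
$$
{\mc V}:=\{\{\0\}\subset A\subset A^\perp\subset V\}
\qquad\text{and}\qquad
{\mc V}':=\{\{\0'\}\subset A'\subset A'^\perp\subset V'\}
$$
are self-dual flags with $k=3$, and they remain legitimate self-dual flags even in the degenerate cases $A=\{\0\}$ or $A=A^\perp$. Because $A$ and $A'$ are totally isotropic, $A\approx A'$ simply means $\dim A=\dim A'$, so by the dimension criterion for isometric self-dual flags recalled just before Theorem \ref{thm:iso-Witt-like}, the flags ${\mc V}$ and ${\mc V}'$ are isometric.

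Next I would apply Theorem \ref{thm:iso-Witt-like} to $E\subset V$, $E'\subset V'$, and these two flags. With $V_1=A$, $V_2=A^\perp$, $V_3=V$ we have $E_1=E\cap A$, $E_2=E\cap A^\perp$, $E_3=E$, and likewise on the primed side. The conditions ``$E_i\approx E_i'$ for $i=1,2,3$'' translate into: $\dim(E\cap A)=\dim(E'\cap A')$ (since $E\cap A\subset A$ is totally isotropic), $E\cap A^\perp\approx E'\cap A'^\perp$, and $E\approx E'$ (a standing hypothesis). Since $E_i^\perp\cap E_j=E_j$ whenever $i+j\le k=3$, the only instances of ``$\dim(E_i^\perp\cap E_j)=\dim(E_i'^\perp\cap E_j')$ for $k-i<j<i\le k$'' that are not automatic are those with $i=3$ and $j\in\{1,2\}$, and these read $\dim(E^\perp\cap E\cap A)=\dim(E'^\perp\cap E'\cap A')$ and $\dim(E^\perp\cap E\cap A^\perp)=\dim(E'^\perp\cap E'\cap A'^\perp)$. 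Thus the four displayed conditions of the corollary together with $E\approx E'$ are exactly the hypotheses of Theorem \ref{thm:iso-Witt-like} for this pair of flags, and that theorem produces an isometry $\phi_V:V\to V'$ with $\phi_V(E)=E'$ and $\phi_V({\mc V})={\mc V}'$; in particular $\phi_V(A)=\phi_V(V_1)=V_1'=A'$.

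For the converse, if an isometry $\phi_V:V\to V'$ satisfies $\phi_V(E)=E'$ and $\phi_V(A)=A'$, then $\phi_V(A^\perp)=\phi_V(A)^\perp=A'^\perp$ by Lemma \ref{isometry}(1), so $\phi_V$ automatically sends ${\mc V}$ to ${\mc V}'$; the ``only if'' direction of Theorem \ref{thm:iso-Witt-like}, followed by the same unwinding, then yields all four displayed conditions. I do not anticipate any genuine obstacle here; the only points that need care are the index bookkeeping---checking that the pair $(i,j)=(2,1)$ contributes nothing new, being excluded by $k-i<j$ (the range ``$1<j<2$'' is empty), consistently with $E_2^\perp\cap E_1=E_1$ since $2+1=k$---and confirming that the flags above are valid self-dual flags in the degenerate cases. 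Everything else is a direct appeal to Theorem \ref{thm:iso-Witt-like} and Lemma \ref{isometry}(1).
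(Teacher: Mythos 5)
Your proposal is correct and is exactly the paper's argument: the paper also forms the self-dual flags $\{\{\0\}\subset A\subset A^\perp\subset V\}$ and $\{\{\0'\}\subset A'\subset A'^\perp\subset V'\}$ and applies Theorem \ref{thm:iso-Witt-like}. Your unwinding of the conditions (including the empty range for $(i,j)=(2,1)$) is accurate; the paper simply leaves those details to the reader.
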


\begin{proof}
Denote the self-dual flags ${\mc V}:=\{\{\0\}\subset A\subset A^\perp\subset V\}$
and ${\mc V}':=\{\{\0'\}\subset A'\subset A'^\perp\subset V'\}$.
Then  apply Theorem \ref{thm:iso-Witt-like} to get the result.
\end{proof}

If each of $E$, $A$, $E^\perp$, and $A^\perp$
in Corollary \ref{E-A-isotropic} is not totally
isotropic, then by Remark \ref{counter-example}, we may not be able to
determine the simultaneous isometry  of $(E,A)$ pair with $(E', A')$ pair
by solely inspecting the isometries of the
corresponding subspaces related to the pairs
via intersections, additions, and taking orthogonal complements.


\end{document}